\newtheorem{teo}{Theorem}[section]
\newtheorem{prop}[teo]{Proposition}
\newtheorem{cor}[teo]{Corollary}
\newtheorem{lem}[teo]{Lemma}
\newtheorem{ejem}[teo]{Example}
\theoremstyle{definition}
\newtheorem{defi}[teo]{Definition}
\theoremstyle{remark}
\newtheorem{remark}[teo]{Remark}
\renewcommand{\geq}{\geqslant}
\renewcommand{\leq}{\leqslant}
\newcommand{\N}{\mathbb{N}}
\newcommand{\R}{\mathbb{R}}
\newcommand{\SA}{\operatorname{LipSNA}}
\newcommand{\NA}{\operatorname{NA}}
\newcommand{\Lip}{{\mathrm{Lip}}_0}
\newcommand{\Lipc}{{\mathrm{Lip}}_{0K}}
\newcommand{\diam}{\operatorname{diam}}
\newcommand{\cco}{\overline{\operatorname{co}}}
\newcommand{\co}{\operatorname{co}}
\newcommand{\Mol}{\operatorname{Mol}}
\newcommand{\F}{\mathcal{F}}
\begin{document}

\title[The Bishop--Phelps--Bollob\'{a}s property for Lipschitz maps]{The Bishop--Phelps--Bollob\'{a}s property\\ for Lipschitz maps}

\author[Chiclana]{Rafael Chiclana}

\author[Mart\'in]{Miguel Mart\'in}

\address{Universidad de Granada, Facultad de Ciencias.
Departamento de An\'{a}lisis Matem\'{a}tico, 18071-Granada
(Spain)}
\email{rafachiclanavega@gmail.com; mmartins@ugr.es}

\thanks{Research partially supported by Spanish MINECO/FEDER grants MTM2015-65020-P and PGC2018-093794-B-I00}

\keywords{Lipschitz function; Lipschitz map; Lipschitz-free space; norm attaining operators; Bishop-Phelps-Bollobas property; metric space}

\subjclass[2010]{Primary 46B04; Secondary 46B20, 26A16, 54E50}

\date{January 24th, 2019. Revised: May 2nd, 2019.}

\begin{abstract}
In this paper, we introduce and study a Lipschitz version of the Bishop-Phelps-Bollob\'{a}s property (Lip-BPB property). This property deals with the possibility of making a uniformly simultaneous approximation of a Lipschitz map $F$ and a pair of points at which $F$ almost attains its norm by a Lipschitz map $G$ and a pair of points such that $G$ strongly attains its norm at the new pair of points. We first show that if $M$ is a finite pointed metric space and $Y$ is a finite-dimensional Banach space, then the pair $(M,Y)$ has the Lip-BPB property, and that both finiteness assumptions are needed. Next, we show that if $M$ is a uniformly Gromov concave pointed metric space (i.e.\ the molecules of $M$ form a set of uniformly strongly exposed points), then $(M,Y)$ has the Lip-BPB property for every Banach space $Y$. We further prove that this is the case for finite concave metric spaces, ultrametric spaces, and H\"{o}lder metric spaces. The extension of the Lip-BPB property from $(M,\R)$ to some Banach spaces $Y$ and some results for compact Lipschitz maps are also discussed.
\end{abstract}

\maketitle

\thispagestyle{plain}

\section{Introduction}
	
Throughout this paper the Banach spaces will be over the real scalars and all the metric spaces will be complete. If $X$ is a Banach space, we will denote the unit ball of $X$ by $B_{X}$, the unit sphere by $S_{X}$, and the topological dual of $X$ by $X^*$. We denote by $\operatorname{L}(X,Y)$ the space of bounded linear operators between the Banach spaces $X$ and $Y$.

Given a \emph{pointed} metric space $M$ (that is, there is a distinguished point in $M$ denoted by $0$) and a Banach space $Y$, the space $\Lip(M,Y)$ of all Lipschitz maps from $M$ to $Y$ which vanish at $0$ is a Banach space when endowed with the norm
	\[ \|F\|_L = \sup \left \{\frac{\|F(p)-F(q)\|}{d(p,q)} \colon p, q \in M,\, p\neq q \right \} \quad \forall \, F \in \Lip(M,Y).\]
	We say that $F \in \Lip(M,Y)$ \emph{attains its norm in the strong sense} or \emph{strongly attains its norm} if there exist $p, q \in M$, $p\neq q$ such that
	\[ \frac{\|F(p)-F(q)\|}{d(p,q)}=\|F\|_L. \]
We write $\SA(M,Y)$ to denote the set of those Lipschitz maps from $M$ to $Y$ which strongly attain their norms.

Recently, the problem of deciding for which metric spaces $M$ the set $\SA(M,Y)$ is (norm) dense in $\Lip(M,Y)$ has been studied. We refer the reader to \cite{articulo1}, \cite{lppr}, \cite{Godefroy-survey-2015}, and \cite{kms}, as references for this study. Let us collect here some of the known results. First, the density does not always hold, as in \cite[Example 2.1]{kms} it is shown that $\SA([0,1],\mathbb{R})$ is not dense in $\Lip([0,1],\mathbb{R})$. In fact, this can be generalized to length spaces \cite[Theorem 2.2]{articulo1}. On the other hand, it is obvious that $\SA(M,Y)=\Lip(M,Y)$ when $M$ is finite (actually, this fact characterizes finiteness of $M$). Besides, $\SA(M,Y)$ is dense in $\Lip(M,Y)$ for every Banach space $Y$ when $M$ is uniformly discrete, or $M$ is countable and compact, or $M$ is a compact \emph{H\"{o}lder metric space} (i.e.\ $M=(N,d^\theta)$ for some metric space $(N,d)$ and $0<\theta <1$), but to understand these results we need a little background. Let $M$ be a pointed metric space. We consider $\delta\colon M\longrightarrow \Lip(M,\mathbb{R})^*$ the canonical isometric embedding $p\longmapsto \delta_p$ given by
\[
\langle f,\delta_p\rangle=f(p) \quad \text{for every $p \in M$ and $f \in \Lip(M,\mathbb{R})$}.
\]
We denote by $\mathcal{F}(M)$ the norm-closed linear span of $\delta(M)$ in the Banach space $\Lip(M,\mathbb{R})^*$, which is usually called the \emph{Lipschitz-free} space over $M$. For background on this, we refer to the papers \cite{Godefroy-survey-2015}, \cite{gk}, and the book \cite{wea2}. It is well known that $\mathcal{F}(M)$ is an isometric predual of the space $\Lip(M,\mathbb{R})$ (in fact, if $M$ is bounded or geodesic it is the unique isometric predual \cite[Section 3.4]{wea2}). In \cite[Proposition 7.4]{lppr} it is proved that if $\mathcal{F}(M)$ has the Radon-Nikodym property (RNP), then $\SA(M,Y)$ is dense in $\Lip(M,Y)$ for every Banach space $Y$ (see also \cite[Theorem~3.1]{articulo1} for an alternative proof). This result provides the proof of the density of $\SA(M,Y)$ in $\Lip(M,Y)$ for every Banach space $Y$ in the aforementioned examples: when $M$ is uniformly discrete, or $M$ is countable and compact, or $M$ is a compact H\"{o}lder metric space, as $\mathcal{F}(M)$ has the RNP in all these cases (see \cite[Example~1.2]{articulo1} to find references where this was proved for each example). Other sufficient conditions for the denseness are also given in \cite{articulo1}, as the existence of a norming set of uniformly strongly exposed points in the unit ball of $\mathcal{F}(M)$ \cite[Proposition 3.3]{articulo1}, in particular, when $\mathcal{F}(M)$ has property $\alpha$ \cite[Corollary 3.10]{articulo1}.

All the mentioned positive results for the density of strongly norm-attaining Lipschitz maps use the fact that every Lipschitz map $F\colon M \longrightarrow Y$ can be isometrically identified with the bounded linear operator $\widehat{F} \colon \mathcal{F}(M) \longrightarrow Y$ defined by $\widehat{F}(\delta_p)=F(p)$ for every $p \in M$. Actually, this provides a total identification (i.e.\ an isometric isomorphism) from $\Lip(M,Y)$ onto $\operatorname{L}(\mathcal{F}(M),Y)$ which we will profusely use throughout the paper. We write $\Mol(M)$ to denote the set of all elements of $\mathcal{F}(M)$ of the form
	\[ m_{p,q} =\frac{\delta_p-\delta_q}{d(p,q)} \quad  p, q \in M,\, p\neq q, \]
which are called \emph{molecules}. A straightforward application of the Hahn-Banach Theorem implies that
	\[ \overline{\co}(\Mol(M)) = B_{\mathcal{F}(M)}.\]
With this notation, a Lipschitz map $F \colon M \longrightarrow Y$ strongly attains its norm when there exists $m_{p,q} \in \Mol(M)$  such that  $\|F\|_L=\|\widehat{F}(m_{p,q})\|$. Let us observe that if $M$ is actually a Banach space and $F:M\longrightarrow Y$ is a bounded linear operator, then $F$ strongly attains its norm (viewed as a Lipschitz function from $M$ into $Y$) if and only if $F$ attains its norm as a bounded linear operator from $M$ into $Y$. In this way, the problem of deciding for which pointed metric spaces $M$ and Banach spaces $Y$ we will have that $\SA(M,Y)$ is dense in $\Lip(M,Y)$ can be considered both as a non-linear generalization of the study of norm-attaining bounded linear operators and as a particular case of such study when the domain space is a Lipschitz-free space, using a stronger definition of norm-attainment.

The study of norm-attaining operators was initiated by Lindenstrauss \cite{lindens} in the 1960's, trying to extend to operators the Bishop-Phelps theorem \cite{bishop-phelps} which states that the set of functionals which attains their norm on a Banach space $X$ is always dense in $X^*$. We refer to the expository papers \cite{AcostaRACSAM2006}, \cite{martin} for a detailed account on results on norm-attaining operators. Let us write $\NA(X,Y)$ to denote the set of norm-attaining operators from the Banach space $X$ to the Banach space $Y$. Observe that for a pointed metric space $M$ and a Banach space $Y$, the density of $\SA(M,Y)$ in $\Lip(M,Y)$ clearly implies the density of $\NA(\mathcal{F}(M),Y)$ in $\operatorname{L}(\mathcal{F}(M),Y)$, but the reciprocal result is no longer true: $\SA([0,1],\R)$ is not dense in $\Lip([0,1],\R)$, while $\operatorname{L}(\mathcal{F}([0,1]),\R)$ is dense in $\Lip(M,\R)=\mathcal{F}(M)^*$ by the Bishop-Phelps Theorem.

An extension of the Bishop-Phelps Theorem was given by Bollob\'{a}s \cite{bollobas} in 1970, which shows that one is always able to make a simultaneous approximation of a functional $f$ and a vector $x$ at which $f$ almost attains its norm by a functional $g$ and a vector $y$ such that $g$ attains its norm at $y$. To study the validity of this result for operators, a property was introduced in 2008. A pair of Banach spaces $(X,Y)$ has the {Bishop-Phelps-Bollob\'{a}s property} (\emph{BPBp} in short) \cite{BPBp} if given $\varepsilon>0$, there is $\eta(\varepsilon)>0$ such that for every norm-one $T \in \operatorname{L}(X,Y)$ and every $x \in S_X$ such that $\|T(x)\|>1-\eta(\varepsilon)$, there exist $u \in S_X$ and $S \in \operatorname{L}(X,Y)$ satisfying
		\[ \|S(u)\|=\|S\|=1,\quad \|T-S\|< \varepsilon, \quad \|x-u\|<\varepsilon. \]
If an analogous definition is valid for operators $T$ and $S$ belonging to a subspace $\mathcal{M} \subseteq \operatorname{L}(X,Y)$, then we say that $(X,Y)$ has the BPBp for operators from $\mathcal{M}$. There is a vast literature about this topic, and we refer the reader to the already cited \cite{BPBp}, to \cite{acklm,ACK,absolutesums,dgmm}, and to the references therein. Let us comment that the mentioned result by Bollob\'{a}s just says that the pair $(X,\R)$ has the BPBp for every Banach space $X$. It is clear that the BPBp of a pair $(X,Y)$ implies the density of $\NA(X,Y)$ in $\operatorname{L}(X,Y)$. The reciprocal result is far for being true: if $Y$ is a strictly convex Banach space which is not uniformly convex, then the pair $(\ell_1^2,Y)$ fails the BPBp (see \cite[Lemma 3.2]{acklm}), while $\NA(\ell_1^2,Y)=\operatorname{L}(\ell_1^2,Y)$.

Our aim in this paper is to extend the Bishop-Phelps-Bollob\'{a}s property to the Lipschitz context in a natural way. Let $M$ be a pointed metric space and let $Y$ be a Banach space. The role of the norm-attaining operator $S$ will be played by a strongly norm-attaining Lipschitz map or, equivalently, by an element of $\operatorname{L}(\mathcal{F}(M),Y)$ attaining its norm at a molecule. As the set $\Mol(M)$ is closed in norm \cite[Proposition 2.9]{lppr} and so the only elements in the unit sphere of $\mathcal{F}(M)$ that can be approximated by molecules are molecules, we need to restrict the point $x$ to be a molecule. Therefore, our generalization reads as follows.

\begin{defi}\label{Lip-BPBp}
		Let $M$ be a pointed metric space and let $Y$ be a Banach space. We say that the pair $(M,Y)$ has the \emph{Lipschitz Bishop-Phelps-Bollob\'{a}s property} (\emph{Lip-BPB property} for short), if given $\varepsilon>0$ there is $\eta(\varepsilon)>0$ such that for every norm-one $F\in \Lip(M,Y)$ and every $p,q\in M$, $p\neq q$ such that $\|F(p)-F(q)\|> \bigl(1-\eta(\varepsilon)\bigr)d(p,q)$, there exist $G \in \Lip(M,Y)$ and $r,s\in M$, $r\neq s$, such that
\[
\frac{\|G(r)-G(s)\|}{d(r,s)}=\|G\|_L=1,\quad \|G-F\|_L<\varepsilon, \quad \frac{d(p,r)+d(q,s)}{d(p,q)}<\varepsilon.
\]
If the previous definition holds for a class of linear operators from $\mathcal{F}(M)$ to $Y$, we will say that the pair $(M,Y)$ has the Lip-BPB property for that class.
\end{defi}

It is clear that if a pair $(M,Y)$ has the Lip-BPB property, then $\SA(M,Y)$ will be norm-dense in $\Lip(M,Y)$. We will show that the reciprocal result is no longer true (see Example \ref{finito no}, among others).

Observe that the quantity $\frac{d(p,r)+d(q,s)}{d(p,q)}$ in the definition above measures the nearness of the pair $(p,q)$ to the pair $(r,s)$ modulated by the distance of $p$ to $q$, so the smallness of it represents that the two pairs are ``relatively'' near one to the other.

The first statement of the following remark, which is a routine application of Lemma 1.3 of \cite{articulo1} analogous to what is done in \cite[Lemma 3.12]{articulo1}, gives a reformulation of the Lip-BPB property. We will use both equivalent formulations without giving any explicit reference. The second statement describes the case when $M$ is a Banach space and the Lipschitz maps are actually linear.

\begin{remark}
Let $M$ be a pointed metric space and let $Y$ be a Banach space.
\begin{enumerate}
  \item[(a)] The pair $(M,Y)$ has the Lip-BPB property if and only if given $\varepsilon>0$ there is $\eta(\varepsilon)>0$ such that for every norm-one $\widehat F\in \operatorname{L}(\mathcal{F}(M),Y)$ and every $m\in \Mol(M)$ such that $\|\widehat F(m)\|>1-\eta(\varepsilon)$, there exist $\widehat G \in \operatorname{L}(\mathcal{F}(M),Y)$ and $u \in \Mol(M)$ such that
\[
\|\widehat{G}(u)\|=\|G\|_L=1,  \quad \|\widehat F-\widehat G\| <\varepsilon, \quad \|m-u\|<\varepsilon. \]
  \item[(b)] Suppose $M$ is a Banach space. If in  Definition \ref{Lip-BPBp}, for every $F\in \operatorname{L}(M,Y)$ satisfying the hypothesis we actually get $G\in \operatorname{L}(M,Y)$ satisfying the thesis, then we recuperate the (classical) BPBp of the pair $(M,Y)$.
\end{enumerate}
\end{remark}

The above two remarks shows that the study of the LipBPB property is both a non-linear generalization of the (classical) BPBp and a particular case of the BPBp where the domain space is a Lipschitz-free space and the concept of norm-attainment is stronger than the usual one.

Let us present the content of the paper. Section \ref{sec-finite} is devoted to present some results for finite metric spaces. In particular, we show that if $M$ is a finite pointed metric space and $Y$ is a finite-dimensional space, then $(M,Y)$ has the Lip-BPB property. We also present examples showing that this result is no longer true without the finiteness of the metric space or without the finite-dimensionality of the Banach space. We prove in Section \ref{sec-universal} that if $M$ is uniformly Gromov concave (i.e.\ $\Mol(M)$ is a set of uniformly strongly exposed points) then $(M,Y)$ has the Lip-BPB property for every Banach space $Y$. Examples of such $M$'s are ultrametric spaces, concave finite pointed metric spaces, and H\"{o}lder metric spaces. Finally, we divide Section \ref{sec-fromscalartovectors} into two parts. The first part is devoted to discuss the relationship between the Lip-BPB property for scalar functions and the Lip-BPB property for vector-valued maps. We first show that the Lip-BPB property of $(M,\R)$ is a necessary condition to have that $(M,Y)$ has the Lip-BPB property for any space $Y$. Then, we present property $\beta$ as a sufficient condition on $Y$ to assure that the Lip-BPB property of $(M,\R)$ passes to the Lip-BPB property of $(M,Y)$. Moreover, we show that assuming the density of $\SA(M,\mathbb{R})$, property quasi-$\beta$ is a sufficient condition on $Y$ to guarantee the density of $\SA(M,Y)$, and that the same result does not hold for the Lip-BPB property. The second part contains results for Lipschitz compact maps. Among them, we show that the Lip-BPB property of $(M,\R)$ implies the Lip-BPB property for Lipschitz compact maps of $(M,Y)$ when $Y$ is a predual of an $L_1$-space.

Let us finally mention that some of the proofs are inspired by the analogous ones for the linear BPBp.
	
	\section{Finite pointed metric spaces}\label{sec-finite}
We study here the Lip-BPB property for finite pointed metric spaces. The following is our main positive result here.
	
	\begin{teo}\label{finito} Let $M$ be a finite pointed metric space and let $Y$ be a Banach space. If $(\mathcal{F}(M),Y)$ has the BPB property, then $(M,Y)$ has the Lip-BPB property.
	\end{teo}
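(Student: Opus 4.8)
The plan is to transfer the BPB property from $(\mathcal{F}(M),Y)$ to the Lip-BPB property using the identification $\Lip(M,Y) \cong \operatorname{L}(\mathcal{F}(M),Y)$ and the reformulation in Remark (a). Given $\varepsilon > 0$, we start with the $\eta_{\mathcal{F}}(\varepsilon')$ provided by the BPB property of $(\mathcal{F}(M),Y)$ for a suitable $\varepsilon'$ to be chosen. Taking a norm-one $\widehat F \in \operatorname{L}(\mathcal{F}(M),Y)$ and a molecule $m = m_{p,q}$ with $\|\widehat F(m)\| > 1 - \eta_{\mathcal{F}}(\varepsilon')$, the BPB property yields $\widehat G \in \operatorname{L}(\mathcal{F}(M),Y)$ and $x \in S_{\mathcal{F}(M)}$ with $\|\widehat G(x)\| = \|\widehat G\| = 1$, $\|\widehat F - \widehat G\| < \varepsilon'$, and $\|m - x\| < \varepsilon'$. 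The obstruction is that $x$ need not be a molecule, so $\widehat G$ strongly attains its norm only at some point of $B_{\mathcal{F}(M)}$, not necessarily at a molecule; moreover even if we find a molecule near $x$, the operator $\widehat G$ need not attain its norm there.

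To overcome this I would use finiteness of $M$ crucially, in two ways. First, since $M$ is finite, $\Mol(M)$ is a \emph{finite} set, hence $B_{\mathcal{F}(M)} = \co(\Mol(M))$ is a polytope and $\Mol(M) \supseteq \ext(B_{\mathcal{F}(M)})$; in particular any point $x$ of the sphere at which $\|\widehat G\|$ is attained can be written as a convex combination of molecules, and since $\|\widehat G(x)\| = \|\widehat G\|$ with $\|\widehat G(m_i)\| \le \|\widehat G\|$ for the molecules $m_i$ appearing in that combination (and $Y$-norm being convex), $\widehat G$ must actually attain its norm at each such molecule $m_i$. So $\widehat G \in \SA$, attaining at some molecule $u = m_{r,s}$. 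Second, I must check that $u$ can be chosen close to $m = m_{p,q}$, i.e. that $\frac{d(p,r)+d(q,s)}{d(p,q)}$ is small: here I would use a separation/compactness argument exploiting that $\Mol(M)$ is finite, so there is a gap $\rho > 0$ between any two distinct molecules; choosing $\varepsilon' < \rho$ forces any molecule appearing in the convex decomposition of $x$ with positive coefficient—wait, this needs care—to be... Actually the cleaner route is: $\|m - x\| < \varepsilon'$ and $x = \sum \lambda_i m_i$ forces, by looking at extreme-point structure and taking $\varepsilon'$ small relative to the finite geometry of the polytope, that $m$ itself is among the $m_i$ with $\lambda_i$ bounded below; then $m = m_{p,q}$ is one of the molecules at which $\widehat G$ attains its norm, so we may take $(r,s) = (p,q)$ and the quantity $\frac{d(p,r)+d(q,s)}{d(p,q)} = 0 < \varepsilon$.

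Thus the scheme is: (i) fix $\varepsilon$, set $\varepsilon' = \min\{\varepsilon, \rho/2\}$ where $\rho$ depends only on the finite polytope $B_{\mathcal{F}(M)}$; (ii) define $\eta(\varepsilon) := \eta_{\mathcal{F}}(\varepsilon')$; (iii) run the BPB property of $(\mathcal{F}(M),Y)$ to get $\widehat G$ and $x$; (iv) decompose $x$ into molecules and argue $\widehat G$ attains its norm at $m = m_{p,q}$ itself (because of the polytope structure and $\varepsilon'$ small), giving $\|\widehat G - \widehat F\| < \varepsilon' \le \varepsilon$ and $(r,s) = (p,q)$; (v) translate back via $\Lip(M,Y) \cong \operatorname{L}(\mathcal{F}(M),Y)$ to conclude $(M,Y)$ has the Lip-BPB property. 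The main obstacle is step (iv): pinning down precisely the quantitative geometric fact that in a polytope, a sphere point $x$ within $\varepsilon'$ of a fixed vertex $m$, at which a norm-one functional-valued operator attains its norm, forces the operator to attain at $m$; this requires a short lemma on the finite geometry of $\co(\Mol(M))$, and getting the dependence of $\rho$ only on $M$ (not on $\widehat F$) is where the finiteness of $M$ is indispensable — consistent with the examples announced in the introduction showing both finiteness hypotheses are necessary.
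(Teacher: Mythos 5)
Your overall architecture coincides with the paper's: invoke the BPB property of $(\mathcal{F}(M),Y)$ to get $\widehat G$ attaining its norm at some $\xi\in B_{\mathcal{F}(M)}$ close to $m$, then use the finiteness of $M$ to conclude that $\widehat G$ in fact attains its norm at $m$ itself, so that one may take $(r,s)=(p,q)$. Steps (i)--(iii) and (v) are fine, and your observation that a supporting functional forces $\widehat G$ to attain its norm at every molecule appearing with positive weight in a convex decomposition of $\xi$ is correct (note that ``convexity of the $Y$-norm'' alone only gives attainment at \emph{some} $m_i$; you need the functional $y^*$ with $y^*(\widehat G(\xi))=1$ to get \emph{all} of them). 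The genuine gap is exactly the step you flag as the main obstacle, and the route you sketch for it does not work as stated: you cannot conclude that ``$m$ itself is among the $m_i$'' in the decomposition of $\xi$, because convex decompositions into molecules are not unique and $m$ need not be an extreme point of $B_{\mathcal{F}(M)}$. For instance, in $M=\{0,1,2\}$ one has $m_{0,2}=\frac12 m_{0,1}+\frac12 m_{1,2}$, so even $\xi=m$ admits a decomposition in which $m$ does not appear; and $\Mol(M)\supseteq\ext(B_{\mathcal{F}(M)})$ is the wrong inclusion to lean on here, since what you need is that the particular decomposition you chose detects $m$.

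There are two ways to close this gap. The paper's way is to use that $\mathcal{F}(M)$ has property $\alpha$ (Definition \ref{def:alpha}): decomposing \emph{both} $m$ and $\xi$ over the same finite system $\Gamma=\{x_k\}$ and using the biorthogonal-type functionals $x_k^*$ together with the constant $\rho<1$, one shows quantitatively that every $x_k$ carrying positive weight in (a fixed decomposition of) $m$ also carries positive weight in the decomposition of $\xi$, provided $\varepsilon'<(1-\rho)\delta$ where $\delta$ is the minimal positive coefficient over the finitely many fixed decompositions of molecules; the supporting-functional argument then gives $\|\widehat G(m)\|=1$. Alternatively, your geometric idea can be salvaged by arguing with faces rather than with decompositions: the set $A=\{z\in B_{\mathcal{F}(M)}\colon y^*(\widehat G(z))=1\}$ is a closed (exposed) face of the polytope $B_{\mathcal{F}(M)}$ containing $\xi$, and every point of $A$ is a norm-attainment point of $\widehat G$; since a polytope has finitely many faces and $\Mol(M)$ is finite, $\rho_0:=\min\{\operatorname{dist}(u,F)\colon u\in\Mol(M),\ F\ \text{a face},\ u\notin F\}>0$, so taking $\varepsilon'<\rho_0$ forces $m\in A$ and hence $\|\widehat G(m)\|=1$. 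Either completion is needed; as written, step (iv) is an acknowledged but unproved claim whose literal formulation is false.
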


We will see in Example \ref{example3.10} that the converse of the above result does not hold.

To prove Theorem \ref{finito}, we need to particularize to Lipschitz-free spaces a property which was introduced for general Banach spaces by Schachermayer \cite{Schachermayer} in relation with the density of norm attaining operators.
\begin{defi}\label{def:alpha}
	A Banach space $X$ is said to have \emph{property $\alpha$} if there exist a balanced subset $ \Gamma=\{x_\lambda\}_{\lambda \in \Lambda}$ of $X$, a subset $\Gamma^*=\{x^*_\lambda\}_{\lambda \in \Lambda} \subseteq X^*$, and a constant $0\leq \rho<1$ such that
	\begin{enumerate}[(i)]
			\item $\lVert x_\lambda \rVert = \lVert x^*_\lambda\rVert = \lvert x^*_\lambda(x_\lambda)\rvert =1$ for all $\lambda\in \Lambda$.
			\item $|x^*_\lambda(x_\mu)|\leq \rho$ for all $x_\lambda \neq \pm x_\mu$.
			\item $\overline{\co}\left(\{x_\lambda\colon \lambda \in \Lambda\}\right)= B_X$.
		\end{enumerate}	
\end{defi}

It is shown in \cite[Corollary 3.10]{articulo1} that given a pointed metric space $M$ such that $\mathcal{F}(M)$ has property $\alpha$, then $\SA(M,Y)$ is dense in $\Lip(M,Y)$ for every Banach space $Y$.

	\begin{proof}[Proof of Theorem \ref{finito}]
		Note that Example 3.14 in \cite{articulo1} shows that $\mathcal{F}(M)$ has property $\alpha$. Note that from (ii) we obtain that $\|x_\lambda-x_\mu\|\geq |x_\lambda^*(x_\lambda)-x_\lambda^*(x_\mu)|\geq 1-\rho$ when $x_\lambda\neq \pm x_\mu$. Therefore, as $M$ is finite and so $B_{\mathcal{F}(M)}$ is compact, $\Gamma$ must be a finite set:
		\[ \Gamma = \{ x_k \colon k=1, \ldots, n \}. \]
		Moreover, as $B_{\F(M)} = \cco(\Gamma) = \co(\Gamma)$, every molecule $m_{p,q} \in \Mol(M)$ can be written as a convex combination of these points. Let us take \[\delta=\min\left\{\min\left\{\lambda_k \colon m_{p,q} = \sum_{k=1}^n \lambda_k x_k, \lambda_k > 0\right\} \colon m_{p,q} \in \Mol(M) \right\}>0.\]

		Now, fix $0<\varepsilon<\min\left \{\frac{1}{2}, (1-\rho)\delta \right \}$ and take $\eta(\varepsilon)$ the constant associated to the BPB property of the pair $(\mathcal{F}(M),Y)$. Consider $F \in \Lip(M,Y)$ with $\| F\|_L =1$ and $m \in \Mol(M)$ such that $\|\widehat{F}(m)\|> 1- \eta(\varepsilon)$.
		By hypothesis, there exist $G \in \Lip(M,Y)$ and $\xi \in B_{\F(M)}$ satisfying
		\[ \|\widehat{G}(\xi)\|=\|G \|_L=1, \quad \|F-G\|_L<\varepsilon, \quad \|m-\xi\|< \varepsilon. \]
		Note that we can write
		\[ m= \sum_{k=1}^n \lambda_k x_k, \quad \xi=\sum_{k=1}^n \theta_k x_k, \quad \sum_{k=1}^n \lambda_k = \sum_{k=1}^n \theta_k =1,\quad \lambda_k, \theta_k\geq 0\]
		for every $k=1,\ldots,n$. We claim that  $\lambda_k = 0$ whenever $\theta_k = 0$. Indeed, if we suppose that there exists $k \in \{1,\ldots,n\}$ verifying that $\lambda_k \neq 0$ but $\theta_k =0$, then it makes sense to take the constant $\delta_{\xi,m}$ given by
		\[ \delta_{\xi,m}=\min\left\{ \lambda_k \colon \lambda_k \neq 0,\, \theta_k = 0, \, k=1,\ldots,n \right\}.\]
		Let us consider $j \in \{1,\ldots,n\}$ such that $\lambda_j=\delta_{\xi,m}$, so $\theta_j=0$ and we obtain that
		\begin{align*}
		\|m-\xi\|&\geq x^*_j(m)-x^*_j(\xi) = \sum_{k=1}^n \lambda_k x^*_j(x_k) - \sum_{k=1}^n \theta_k x^*_j(x_k)\\
		&= \lambda_j + \sum_{k\neq j} \lambda_k x^*_j(x_k) - \sum_{k\neq j}\theta_k x^*_j(x_k)\\
		&=\lambda_j -\sum_{k\neq j} (\theta_k - \lambda_k)x^*_j(x_k)\geq \lambda_j - \rho\sum_{k\neq j} (\theta_k - \lambda_k)\\
		&= \lambda_j - \rho(1-(1-\lambda_j))=(1-\rho)\lambda_j = (1-\rho)\delta_{x,m}\geq(1-\rho)\delta>\varepsilon,
		\end{align*}
		a contradiction. Now, taking $y^*\in S_{Y^*}$ such that $y^*(\widehat{G}(\xi))=1$, we have that
		\[ 1=y^*(\widehat{G}(\xi))=\sum_{k=1}^n \theta_k y^*(\widehat{G}(x_k))\leq \sum_{k=1}^n \theta_k=1. \]
		Then, $y^*(\widehat{G}(x_k))=1$ for every $k=1,\ldots,n$ such that $\theta_k\neq 0$. By our assumption, this also happens for every $k=1, \ldots, n$ such that $\lambda_k \neq 0$. Consequently, we have that
		\[ \|\widehat{G}(m)\|\geq y^*(\widehat{G}(m))=\sum_{\lambda_k\neq 0} \lambda_k y^*(\widehat{G}(x_k))=\sum_{\lambda_k\neq 0} \lambda_k =1.\]
		That is, $\widehat{G}$ attains its norm at the molecule $m\in\Mol(M)$.
		\qedhere
	\end{proof}

	It is shown in \cite[Proposition 2.4]{BPBp} that if $X$ and $Y$ are finite-dimensional Banach spaces, then $(X,Y)$ has the BPB property. Consequently, we obtain the following corollary.
	
	\begin{cor}
		Let $M$ be a finite pointed metric space and let $Y$ be a finite-dimensional Banach space. Then, $(M,Y)$ has the Lip-BPB property.
	\end{cor}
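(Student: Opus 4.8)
The plan is to obtain this as a formal consequence of Theorem~\ref{finito} together with the already cited fact \cite[Proposition 2.4]{BPBp} that every pair of finite-dimensional Banach spaces has the BPB property. The only point that needs to be recorded is that $\mathcal{F}(M)$ is finite-dimensional whenever $M$ is finite.

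First I would observe that if $M$ is a finite pointed metric space, then $\mathcal{F}(M) = \operatorname{span}\{\delta_p \colon p \in M\}$ is spanned by the finite set $\delta(M)$, hence is finite-dimensional; in fact, since $\delta_0 = 0$, one gets $\dim \mathcal{F}(M) = |M| - 1$, although the exact value is irrelevant here. Consequently $\mathcal{F}(M)$ and $Y$ are both finite-dimensional, so \cite[Proposition 2.4]{BPBp} applies and gives that the pair $(\mathcal{F}(M),Y)$ has the BPB property. Theorem~\ref{finito} then yields at once that $(M,Y)$ has the Lip-BPB property.

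There is essentially no obstacle to overcome: the statement is a direct combination of Theorem~\ref{finito} with a known finite-dimensional result, and the auxiliary remark that finiteness of $M$ forces finite-dimensionality of $\mathcal{F}(M)$ is standard (and is already implicitly used in the proof of Theorem~\ref{finito}, where compactness of $B_{\mathcal{F}(M)}$ is invoked). The write-up can therefore be kept to a couple of lines.
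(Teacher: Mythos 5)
Your proposal is correct and follows exactly the paper's own route: note that $\mathcal{F}(M)$ is finite-dimensional when $M$ is finite, invoke \cite[Proposition 2.4]{BPBp} to get the BPB property of $(\mathcal{F}(M),Y)$, and apply Theorem~\ref{finito}. Nothing to add.
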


	In particular, we obtain the following.
	
	\begin{cor}\label{corfinite}
		Let $M$ be a finite pointed metric space. Then, $(M,\mathbb{R})$ has Lip-BPB property.
	\end{cor}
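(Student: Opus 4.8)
The plan is to obtain this as an immediate consequence of the preceding corollary. Since $\mathbb{R}$ is a one-dimensional, hence finite-dimensional, Banach space, specializing that corollary to $Y=\mathbb{R}$ gives at once that $(M,\mathbb{R})$ has the Lip-BPB property whenever $M$ is a finite pointed metric space.

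Alternatively, one may argue directly from Theorem \ref{finito}: by that result it suffices to check that the pair $(\mathcal{F}(M),\mathbb{R})$ has the BPB property, and this is precisely Bollob\'as's theorem \cite{bollobas}, which asserts that $(X,\mathbb{R})$ has the BPB property for \emph{every} Banach space $X$; note that along this route one does not even need the finite-dimensionality of $\mathcal{F}(M)$. Either way there is no genuine obstacle here: the entire content has already been absorbed into Theorem \ref{finito} together with the finite-dimensional BPB property recalled from \cite[Proposition 2.4]{BPBp}.
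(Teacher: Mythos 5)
Your first argument is exactly the paper's: the corollary is stated there as an immediate specialization of the preceding one to $Y=\mathbb{R}$, which is finite-dimensional. Your alternative route via Theorem \ref{finito} and Bollob\'as's theorem for $(\mathcal{F}(M),\mathbb{R})$ is also correct, but the main approach coincides with the paper's, so nothing further is needed.
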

	
	In \cite[Theorem 2.2]{BPBp} it is also shown that if a Banach space $Y$ has property $\beta$, then the pair $(X,Y)$ has the BPB property for every Banach space $X$. Note that, by using Theorem \ref{finito}, we obtain that given a finite pointed metric space $M$ and a Banach space $Y$ having property $\beta$, the pair $(M,Y)$ will have the Lip-BPB property. In this way we could give more corollaries. However, in Section \ref{sec-fromscalartovectors} we will give a stronger result which generalizes all of them.

	The next example shows that we cannot remove the hypothesis of having  $(\F(M),Y)$ the BPB property in Theorem \ref{finito}. It is an adaption of \cite[Lemma 3.2]{acklm}.
	
	\begin{ejem}\label{finito no}
		Let $M=\{0,1,2\}\subseteq \mathbb{R}$ with the usual metric and let $Y$ be a strictly convex Banach space which is not uniformly convex. Then, $(M,Y)$ fails the Lip-BPB property.
	\end{ejem}
	
	\begin{proof}
		Observe that $\mathcal{F}(M)$ is two-dimensional and that $m_{0,2}=\frac12 m_{0,1} + \frac12 m_{1,2}$, so \[B_{\mathcal{F}(M)}=\overline{\co}\{\pm m_{0,1},\pm m_{1,2}\}\] is a square. On the other hand, as $Y$ is not uniformly convex, there exist sequences $\{x_n\}, \{y_n\} \subseteq S_Y$ and $\varepsilon_0>0$ such that
		\[ \lim_{n \to \infty} \|x_n+y_n\| = 2 \quad \text{ and } \quad \|x_n-y_n\|> \varepsilon_0 \quad \forall \, n \in \mathbb{N}. \]
		Fix $0<\varepsilon<\frac{\varepsilon_0}{2}$ and assume that $(M,Y)$ has the Lip-BPB property witnessed by the function $\varepsilon\longmapsto \eta(\varepsilon)>0$. Take $m \in \mathbb{N}$ such that
		\[\|x_m+y_m\|>2-2\eta(\varepsilon)\]
		and define the linear operator $\widehat{F}\in \operatorname{L}(\F(M), Y)$ by
		\[\widehat{F}(m_{0,1})=x_m, \qquad \widehat{F}(m_{1,2})=y_m. \]
		It is clear, by the shape of the unit ball of $\mathcal{F}(M)$, that $\|\widehat{F}\|=1$. Furthermore, note that
		\[ \|\widehat{F}(m_{0,2})\|=\left\|\widehat{F}\left(\frac{m_{0,1}+m_{1,2}}{2}\right)\right\|=\frac{1}{2}\|x_m+y_m\|>1-\eta(\varepsilon). \]
		Therefore, there exist a linear operator $\widehat{G} \colon \mathcal{F}(M) \longrightarrow Y$ and a molecule $u \in \Mol(M)$ such that
		\[ \|\widehat{G}(u)\|=\|\widehat{G}\|=1, \quad \|\widehat{F}-\widehat{G}\|< \varepsilon, \quad \|m_{0,2}-u\|<\varepsilon. \]
		A straightforward application of Lemma 1.3 in \cite{articulo1} shows that
		\[ \|m_{0,2}-m_{0,1}\|, \|m_{0,2}-m_{1,2}\| \geq 1, \]
		hence $u=m_{0,2}$. Now, note that
		\[ 1=\|\widehat{G}(m_{0,2})\|=\left\|\frac{1}{2}\widehat{G}(m_{0,1})+\frac{1}{2} \widehat{G}(m_{1,2})\right\|.\]
		Since $Y$ is strictly convex, it follows that $\widehat{G}(m_{0,1})=\widehat{G}(m_{1,2})$, which implies that
		\begin{align*}\|x_m-y_m\|&=\|\widehat{F}(m_{0,1})-\widehat{F}(m_{1,2})\|\\ &\leq \|\widehat{F}(m_{0,1})-\widehat{G}(m_{0,1})\| + \|\widehat{F}(m_{1,2}) - \widehat{G}(m_{1,2})\|
		\leq \varepsilon+\varepsilon<\varepsilon_0,
		\end{align*}
		a contradiction.
	\end{proof}

	Finally, the following example shows that the finiteness of the metric space is also necessary in Theorem~\ref{finito}.
	
	\begin{ejem}\label{N}
		$(\mathbb{N},\mathbb{R})$ does not have Lip-BPB property.
	\end{ejem}
	\begin{proof}
		Fix $0<\varepsilon<\frac{1}{2}$ and suppose that $(\N,\R)$ has the Lip-BPB property witnessed by a function $\varepsilon \longmapsto \eta(\varepsilon)>0$ which we can suppose satisfies $\eta(\varepsilon)<\frac{1}{2}$.

		Take $n \in \mathbb{N}$ such that $n>\frac{1}{2\eta(\varepsilon)}$ and define $f \colon \mathbb{N} \longrightarrow \mathbb{R}$ by
		\[ f(p)=  \left\{
		\begin{array}{ll}
		p-1 & \mbox{ if $p\leq 2n$ } \\
		p-2 & \mbox{ if $p > 2n$} \\
		\end{array} \right. \]
		It is clear that $f \in \Lip(\mathbb{N},\mathbb{R})$ with $\|f\|_L=1$. Besides,
		\[ \widehat{f}(m_{3n,n})=\frac{f(3n)-f(n)}{3n-n}= \frac{2n-1}{2n}= 1- \frac{1}{2n}>1-\eta(\varepsilon).\]
		Now, given $p<q \in \mathbb{N}$, if $\hat{g} \in \operatorname{L}(\mathcal{F}(\mathbb{N}),\mathbb{R})$ with $\|g\|_L=1$ attains its norm at a molecule $m_{q,p}$ such that $\|m_{q,p} -m_{3n,n}\|<\varepsilon$, Lemma 1.3 in \cite{articulo1} implies that $[2n, 2n+1]\subseteq [p,q]$. Indeed, if we assume that $p>2n$ or $q<2n+1$, then by applying that lemma we obtain
		\[ \|m_{q,p}-m_{3n,n}\| \geq \frac{\max\{|q-3n|,|p-n|\}}{\min\{|q-p|,2n\}}\geq \frac{n}{2n}=\frac{1}{2},\]
		which is a contradiction since $\varepsilon<\frac{1}{2}$.
		According to \cite[Lemma 2.2]{kms}, $g$ attains its norm at the molecule $m_{2n+1, 2n}$. In view of this, it is enough to note that
		\[ \|g-f\|_L\geq \widehat{g}(m_{2n+1, 2n})-\widehat{f}(m_{2n_0+1, 2n})= 1-0=1, \]
		which is a contradiction.
	\end{proof}
	
	\section{Universal Lip-BPB property metric spaces}\label{sec-universal}
	
	A pointed metric space $M$ is said to be \textit{concave} if every molecule of $M$ is a preserved extreme point, that is, an extreme point of the unit ball of $\mathcal{F}(M)^{**}$. This property has been recently characterized in \cite{ag} and, for a boundedly compact pointed metric space $M$ it is shown in \cite[Proposition 3.34]{wea2} that it is equivalent to the fact that $$d(x,y) < d(x,z)+d(z,y)$$ for all distinct points $x,y,z\in M$ (recall that a metric space $M$ is said to be \emph{boundedly compact} if every bounded closed subset of $M$ is compact). In fact, it is proved in \cite[Theorem 1.1]{ap} that a molecule $m_{x,y}$ is an extreme point of the unit ball of $\mathcal{F}(M)$ if, and only if, the above inequality holds for every point $z\in M\setminus\{x,y\}$. A strengthening of the concept of concavity is provided when we require all the molecules to be strongly exposed points of the unit ball of $\F(M)$. By the characterization given in \cite[Theorem 5.4]{gpr}, the property can be written in terms of the metric space and we may also introduce a uniform version of it. We need some notation. Given $x, y, z \in M$, the \emph{Gromov product} \cite[p.~410]{bh} of $x$ and $y$ at $z$ is defined as
	\[(x,y)_z:=\frac{1}{2}\bigl(d(x,z)+d(z,y)-d(x,y)\bigr)\geq 0.\]
	It corresponds to the distance of $z$ to the unique closest point $b$ on the unique geodesic between $x$ and $y$ in any $\mathbb{R}$-tree into which $\{x,y,z\}$ can be isometrically embedded (such a tree always exists). If $X$ is a Banach space, given $f \in S_{X^*}$ and $0<\delta<1$, the \emph{slice} of $B_X$ associated to $f$ and $\delta$ is the set 	
\[ S(B_X,f,\delta)=\{x \in B_{X} \colon f(x)> 1-\delta\}.\]

\begin{defi}
Let $M$ be a pointed metric space.
\begin{enumerate}
  \item[(a)] We say that $M$ is \emph{Gromov concave} if for every $x,y\in M$, $x\neq y$, there is $\varepsilon_{x,y}>0$ such that
  $$(x,y)_z > \varepsilon_{x,y} \min\{d(x,z), d(y,z)\}$$ for every $z\in M\setminus\{x,y\}$.
  \item[(b)] We say that $M$ is \emph{uniformly Gromov concave} if there is $\varepsilon_0>0$ such that $$(x,y)_z > \varepsilon_0 \min\{d(x,z), d(y,z)\}$$ for every distinct $x,y,z\in M$.
\end{enumerate}
\end{defi}

By \cite[Theorem 5.4]{gpr}, $M$ is Gromov concave if and only if every molecule is a strongly exposed point of the unit ball of $\F(M)$. By \cite[Proposition 3.6]{articulo1}, $M$ is uniformly Gromov concave if this happens uniformly:

\begin{remark}\label{remark-Gromovuniformlyconcave}
A pointed metric space $M$ is uniformly Gromov concave if and only if $\Mol(M)$ is a set of uniformly strongly exposed points of $B_{\F(M)}$, that is, there exists a family $\{f_m\}_{m \in \Mol(M)} \subseteq \Lip(M,\R)\equiv \F(M)^*$ with $\|f_m\|_L=\widehat{f}_m(m)=1$ such that for every $\varepsilon>0$ there is $\delta>0$ satisfying that
\[
\diam \bigl( S(B_{\mathcal{F}(M)}, \widehat{f}_m, \delta) \bigr)<\varepsilon \quad \forall \, m \in \Mol(M).
\]
\end{remark}

In the notation of \cite[Definition 3.5]{articulo1}, $M$ is uniformly Gromov concave if and only if $\Mol(M)$ is uniformly Gromov rotund (the more concave is the metric space $M$, the more ``rotund'' is $\Mol(M)$).

It is shown in Proposition 3.3 of \cite{articulo1} that if $B_{\mathcal{F}(M)}$ is the closed convex hull of a set of uniformly strongly exposed points, then $\SA(M,Y)$ is dense in $\Lip(M,Y)$ for every Banach space $Y$. If such a set is the whole $\Mol(M)$ (that is, if $M$ is uniformly Gromov concave), we actually get more.
	
	\begin{teo}\label{theorem-Gromovunifconcave}
 Let $M$ be a uniformly Gromov concave pointed metric space. Then, $(M,Y)$ has the Lip-BPB property for every Banach space $Y$.
	\end{teo}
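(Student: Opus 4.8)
The strategy is to exploit the uniform strong exposedness of the molecules directly, mimicking the standard Bishop--Phelps--Bollob\'as argument for uniformly strongly exposed points (as in the linear theory, e.g.\ \cite[Proposition 3.3]{articulo1}), but now producing a genuine attainment at a molecule rather than mere density. Fix the family $\{f_m\}_{m\in\Mol(M)}\subseteq\Lip(M,\R)$ with $\|f_m\|_L=\widehat f_m(m)=1$ given by Remark \ref{remark-Gromovuniformlyconcave}, and let $\delta\mapsto\delta(\varepsilon)$ be the associated modulus, so that $\diam\bigl(S(B_{\F(M)},\widehat f_m,\delta)\bigr)<\varepsilon$ for all $m$. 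Given $\varepsilon>0$, the plan is to choose $\eta(\varepsilon)$ small (something like $\eta=\min\{\varepsilon/4,\delta(\varepsilon/4)/2\}$, to be tuned) and, starting from a norm-one $F\in\Lip(M,Y)$ and a molecule $m=m_{p,q}$ with $\|\widehat F(m)\|>1-\eta$, to build $G$ by adding to $F$ a small scalar perturbation in the direction that strongly exposes $m$.

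Concretely, pick $y^*\in S_{Y^*}$ with $y^*(\widehat F(m))=\|\widehat F(m)\|>1-\eta$, and consider the functional $\phi:=y^*\circ\widehat F\in\F(M)^*$, i.e.\ the Lipschitz function $p'\mapsto y^*(F(p'))$, which has norm at most one and satisfies $\phi(m)>1-\eta$. Now form $\widehat G_0:=\widehat F+\lambda\,(\widehat f_m - \phi)\cdot(\text{a fixed norm-one vector, or better: perturb }y^*\circ\widehat F)$ — more precisely, the cleanest route is to perturb the operator as $\widehat G:=\widehat F+\lambda\, v\otimes(\widehat f_m-\phi)$ for a suitable unit vector $v$ with $\|\widehat F(m)-\|\widehat F(m)\|v\|$ small (taking $v$ proportional to $\widehat F(m)$), and a small parameter $\lambda>0$. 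One checks that $\widehat G(m)$ is close to $\widehat F(m)$ (so that $y^*(\widehat G(m))$ is still close to $1$), that $\|\widehat G-\widehat F\|\le C\lambda<\varepsilon$, and — this is the key point — that for any $\xi\in B_{\F(M)}$ with $y^*(\widehat G(\xi))$ near $\|\widehat G\|$, the element $\xi$ must lie in a thin slice $S(B_{\F(M)},\widehat f_m,\delta(\varepsilon/4))$, hence is within $\varepsilon/4$ of $m$ in norm; combining with Lemma 1.3 of \cite{articulo1} (relating $\|m_{p,q}-m_{r,s}\|$ to $\frac{d(p,r)+d(q,s)}{d(p,q)}$) this forces the new pair $(r,s)$ to satisfy $\frac{d(p,r)+d(q,s)}{d(p,q)}<\varepsilon$.

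The genuine attainment (not just almost-attainment) comes from a compactness/extremality step: $\Mol(M)$ need not be compact, so one cannot simply say the supremum is attained. Instead, I would argue as in the linear strongly-exposing argument — either iterate the perturbation (a Bourgain-type or successive-approximation scheme) to produce a convergent sequence of operators whose limit attains at a molecule, or, more efficiently, observe that after the perturbation the only candidates for approximate maximizers of $y^*\circ\widehat G$ on $B_{\F(M)}$ are confined to arbitrarily small neighbourhoods of $m$; since $\Mol(M)$ is norm-closed in $\F(M)$ (\cite[Proposition 2.9]{lppr}), a careful choice of $\lambda$ relative to $\eta$ makes the exposing direction $\widehat f_m$ do the work and pins the attainment exactly at $m$ itself. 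In fact the expected cleanest outcome is that $\widehat G$ attains its norm \emph{at the original molecule} $m$, so $(r,s)=(p,q)$ and the third condition is trivially $0<\varepsilon$; this is what happens in analogous finite/uniformly-rotund situations.

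\begin{remark}
The main obstacle is the non-compactness of $\Mol(M)$: one must ensure that the perturbed map $\widehat G$ attains its norm \emph{exactly} at a molecule rather than merely approaching its norm along molecules. This is precisely where uniform Gromov concavity (as opposed to mere Gromov concavity) is indispensable — the uniform modulus $\delta(\cdot)$ lets one quantify the perturbation size $\lambda$ independently of $m$, and the uniformity converts the strong exposedness into an honest attainment via the standard argument that a sum of a norm-attaining-type perturbation over a uniformly strongly exposed set attains. A secondary technical point is bookkeeping the passage between the three inequalities in Definition \ref{Lip-BPBp} and their $\F(M)$-formulation in the Remark, together with Lemma 1.3 of \cite{articulo1}, to translate the norm-smallness $\|m-u\|<\varepsilon$ into the metric smallness $\frac{d(p,r)+d(q,s)}{d(p,q)}<\varepsilon$.
\end{remark}
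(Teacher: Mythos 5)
Your proposal gets the two peripheral ingredients right --- a rank-one perturbation in the exposing direction $\widehat f_m$, and the observation that after perturbing, any element of $B_{\F(M)}$ at which the new operator has norm at least $\|\widehat G_0(m)\|$ must lie in the thin slice $\pm S(B_{\F(M)},\widehat f_m,\delta)$, hence within $\varepsilon$ of $\pm m$. This is exactly the skeleton of the paper's argument (the paper perturbs by $\frac{\varepsilon}{4}\widehat f_m(\cdot)\,\widehat F(m)$, which is slightly simpler than your $v\otimes(\widehat f_m-\phi)$, but that difference is cosmetic).

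The gap is the attainment step, which you flag as ``the main obstacle'' and then do not close. Your preferred resolution --- that ``a careful choice of $\lambda$ relative to $\eta$ \ldots pins the attainment exactly at $m$ itself'' --- is not true in general. The slice argument only shows that the set where $\|\widehat G_0(\cdot)\|$ can exceed $\|\widehat G_0(m)\|$ is contained in $\pm S(B_{\F(M)},\widehat f_m,\delta)$; inside that slice the vector-valued norm $\|\widehat G_0(x)\|$ may still strictly exceed $\|\widehat G_0(m)\|$ without the supremum being attained anywhere (for a scalar functional strong exposedness would save you, but for an operator into an arbitrary $Y$ it does not; note also that at one point you slide from maximizing $\|\widehat G(\xi)\|$ to maximizing $y^*(\widehat G(\xi))$, which are different things). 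Your alternative suggestion, a Bourgain-type iteration, is the right kind of idea but is left entirely unexecuted, and it is precisely the nontrivial part. The paper closes the gap differently and more economically: since $M$ is uniformly Gromov concave, $\SA(M,Y)$ is already known to be dense in $\Lip(M,Y)$ by \cite[Proposition~3.3]{articulo1} (which is where the Lindenstrauss-type iteration lives). One therefore picks $\varepsilon'<\min\{\varepsilon/2,\ \|\widehat G\|-\|\widehat G(m)\|\}$ and a strongly norm-attaining $\widehat H$ with $\|\widehat G-\widehat H\|<\varepsilon'$, attaining at some molecule $u$; the inequality $\|\widehat G(u)\|\geq\|\widehat H\|-\varepsilon'\geq\|\widehat G(m)\|$ then forces $u$ into the slice, so $u$ is within $\varepsilon$ of $\pm m$, and the final map is $\widehat H$, not $\widehat G$. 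Without either carrying out the iteration yourself or invoking that density result, your argument only yields approximate attainment near $m$, which is not the Lip-BPB property.
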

	
	\begin{proof}
Fix $0<\varepsilon<1$. Since $\Mol(M)$ is a set of uniformly strongly exposed points (Remark \ref{remark-Gromovuniformlyconcave}), there exists $0<\delta<1$ such that
\begin{equation}\label{eq:gromovunifconcave_1}
\diam \bigl( S(B_{\mathcal{F}(M)}, \widehat{f}_m, \delta) \bigr) <\varepsilon \quad \forall \, m \in \Mol(M),
\end{equation}
		where $\{\widehat{f}_m\}_{m \in \Mol(M)}$ are the functionals which uniformly strongly expose the molecules of $M$. We take $\eta>0$ satisfying
		\[ \left(1+\frac{\varepsilon}{4}\right)(1-\eta) > 1+\frac{\varepsilon (1-\delta)}{4}.\]
		Now, consider $F \in \Lip(M,Y)$ with $\|F\|_L=1$ and a molecule $m \in \Mol(M)$ such that $\|\widehat{F}(m)\|> 1-\eta$. Then, we define $\widehat{G}_0 \in \operatorname{L}(\F(M),Y)$ given by
		\[ \widehat{G}_0(x)=\widehat{F}(x)+\frac{\varepsilon}{4} \widehat{f}_m(x)\widehat{F}(m) \quad \forall \, x \in \F(M).\]
		It is clear that $\|\widehat{F}-\widehat{G}_0\|\leq \frac{\varepsilon}{4}$. In addition, note that
		\[ \|\widehat{G}_0(m)\|=\left(1+\frac{\varepsilon}{4}\right)\|\widehat{F}(m)\| \geq \left(1+\frac{\varepsilon}{4}\right)(1-\eta).\]
		On the other hand, if $ x  \notin \pm S(B_{\F(M)},f_m,\delta)$ and $\|x\|\leq1$, then we will have that
		\[ \|\widehat{G}_0(x)\|=\left\|\widehat{F}(x)+\frac{\varepsilon}{4}\widehat{f}_m(x)\widehat{F}(m)\right\|\leq 1+\frac{\varepsilon}{4}|\widehat{f}_m(x)| \leq 1+\frac{\varepsilon (1-\delta)}{4}. \]
		Therefore, $\|\widehat{G}_0(x)\|\geq \|\widehat{G}_0(m)\|$ implies that $x \in \pm S(B_{\F(M)},\widehat{f}_m,\delta)$. By defining $\widehat{G}=\frac{\widehat{G}_0}{\|\widehat{G}_0\|}$, we have that
		\[\|\widehat{F}-\widehat{G}\|\leq \|\widehat{F}-\widehat{G}_0\|+\|\widehat{G}_0-\widehat{G}\|=\|\widehat{F}-\widehat{G}_0\|+\bigl|\|\widehat{G}_0\|-1\bigr| \leq \frac{\varepsilon}{4}+\frac{\varepsilon}{4}=\frac{\varepsilon}{2}.\]
	 	Note that if $\widehat{G}$ attains its norm at the molecule $m$, then we have finished. Otherwise, we may take $\varepsilon'$ with $0<\varepsilon'<\min\{\frac{\varepsilon}{2}, \|\widehat{G}\|-\|\widehat{G}(m)\|\}$.
		Now, thanks to Proposition 3.3 in \cite{articulo1}, $\SA(M,Y)$ is dense in $\Lip(M,Y)$. Hence, there exist $\widehat{H} \in \operatorname{L}(\F(M),Y)$ and $u \in \Mol(M)$ satisfying
		\[ \|\widehat{H}\|=\|\widehat{H}(u)\|=1 \quad \mbox{ and } \quad \|\widehat{G}-\widehat{H}\|<\varepsilon'.\]
		Next, we note that
		\[ \|\widehat{G}(u)\|\geq \|\widehat{H}(u)\|-\|\widehat{H}-\widehat{G}\| \geq \|\widehat{H}\|-\varepsilon' \geq \|\widehat{H}\|-(\|\widehat{G}\|-\|\widehat{G}(m)\|)=\|\widehat{G}(m)\|,\]
		which implies that $\|\widehat{G}_0(u)\|\geq\|\widehat{G}_0(m)\|$, hence $u \in \pm S(B_{\mathcal{F}(M)},f_m,\delta)$. It follows from \eqref{eq:gromovunifconcave_1} that $\|m-u\|<\varepsilon$ or $\|m+u\|<\varepsilon$. Finally, note that
		\[ \|\widehat{F}-\widehat{H}\|\leq \|\widehat{F}-\widehat{G}\|+\|\widehat{G}-\widehat{H}\| <\varepsilon. \qedhere\]
	\end{proof}
	
	This result produces interesting corollaries. First, if $M$ is concave and $\mathcal{F}(M)$ has property $\alpha$ (see Definition \ref{def:alpha}), then $M$ is uniformly Gromov concave by \cite[Theorem 3.16]{articulo1}. Therefore, we obtain the next corollary.
	
	\begin{cor}\label{alpha}
		Let $M$ be a concave pointed metric space such that $\mathcal{F}(M)$ has property $\alpha$. Then, $(M,Y)$ has the Lip-BPB property for every Banach space $Y$.
	\end{cor}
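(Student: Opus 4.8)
The plan is to deduce this immediately from Theorem \ref{theorem-Gromovunifconcave}, so the only real work is to check that the hypotheses force $M$ to be uniformly Gromov concave in the sense of Remark \ref{remark-Gromovuniformlyconcave}. First I would unpack what property $\alpha$ of $\mathcal{F}(M)$ gives us: a balanced norming family $\Gamma=\{x_\lambda\}_{\lambda\in\Lambda}$ with $B_{\mathcal{F}(M)}=\overline{\co}(\Gamma)$, separating functionals $\{x^*_\lambda\}_{\lambda\in\Lambda}$, and a constant $\rho<1$ as in Definition \ref{def:alpha}. Since $M$ is concave, every molecule $m_{p,q}$ is a preserved extreme point of $B_{\mathcal{F}(M)}$, hence an extreme point of $\overline{\co}(\Gamma)$; an extreme point of the closed convex hull of a set must lie in the norm-closure of that set, and the uniform separation condition (ii), which forces $\|x_\lambda-x_\mu\|\geq 1-\rho$ for $x_\lambda\neq\pm x_\mu$, shows $\Gamma$ is uniformly discrete up to sign, so in fact each $m_{p,q}$ equals some $\pm x_\lambda$. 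The functional $x^*_\lambda$ then strongly exposes $m_{p,q}$, and (ii) again makes the exposing modulus independent of the molecule. This is precisely \cite[Theorem 3.16]{articulo1}, so I would simply invoke it: under the stated hypotheses $M$ is uniformly Gromov concave.

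Once this is in hand, Theorem \ref{theorem-Gromovunifconcave} applies verbatim and yields that $(M,Y)$ has the Lip-BPB property for every Banach space $Y$, completing the proof. There is essentially no obstacle beyond quoting the two results; the only minor point requiring a word of care is the $\pm$ ambiguity above (a molecule could a priori match $-x_\lambda$ rather than $x_\lambda$), but this is harmless since $-m_{p,q}=m_{q,p}\in\Mol(M)$ is again a molecule, so the family $\{\widehat f_m\}_{m\in\Mol(M)}$ of (uniformly) strongly exposing functionals can be selected consistently over all of $\Mol(M)$, which is exactly what Remark \ref{remark-Gromovuniformlyconcave} requires. Thus the proof is: apply \cite[Theorem 3.16]{articulo1} to get uniform Gromov concavity, then apply Theorem \ref{theorem-Gromovunifconcave}.
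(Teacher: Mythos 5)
Your proposal is correct and follows exactly the paper's own route: the paper likewise deduces the corollary by citing \cite[Theorem 3.16]{articulo1} to conclude that a concave $M$ with $\mathcal{F}(M)$ having property $\alpha$ is uniformly Gromov concave, and then applies Theorem \ref{theorem-Gromovunifconcave}. Your additional sketch of why that cited theorem holds is a reasonable bonus but not needed, since the paper simply quotes the result.
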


	Note that the concavity hypothesis in the previous result is necessary as Example \ref{finito no} and \cite[Example 3.15.b]{articulo1} show.
	
	Since for every finite pointed metric space, $\mathcal{F}(M)$ has property $\alpha$ (see \cite[Example 3.15.a]{articulo1}), we obtain the following interesting particular case.
	
	\begin{cor}\label{alphafinito}
		Let $M$ be a concave finite pointed metric space. Then $(M,Y)$ has the Lip-BPB property for every Banach space $Y$.
	\end{cor}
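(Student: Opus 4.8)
The quickest route is to invoke Corollary \ref{alpha}, which already gives the conclusion for every concave pointed metric space $M$ whose free space $\mathcal{F}(M)$ has property $\alpha$. Since $M$ is concave by hypothesis, the only thing left to check is that $\mathcal{F}(M)$ has property $\alpha$, and this holds for an arbitrary finite $M$ by \cite[Example 3.15.a]{articulo1}. Combining these two facts is the whole argument.

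If one prefers a more self-contained presentation, one can instead feed Theorem \ref{theorem-Gromovunifconcave} directly, the point being that a \emph{finite} concave metric space is automatically \emph{uniformly} Gromov concave. Indeed, by the characterization of concavity recalled at the beginning of Section \ref{sec-universal} (applicable since a finite metric space is boundedly compact), concavity of $M$ amounts to $d(x,y)<d(x,z)+d(z,y)$, i.e.\ $(x,y)_z>0$, for every triple of distinct points $x,y,z\in M$; equivalently, each ratio $(x,y)_z/\min\{d(x,z),d(y,z)\}$ is strictly positive. As $M$ is finite there are only finitely many such triples, so setting $\varepsilon_0$ to be, say, half the minimum of these finitely many positive numbers yields $(x,y)_z>\varepsilon_0\min\{d(x,z),d(y,z)\}$ for all distinct $x,y,z\in M$, which is exactly uniform Gromov concavity. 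Theorem \ref{theorem-Gromovunifconcave} then delivers the Lip-BPB property of $(M,Y)$ for every Banach space $Y$.

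There is essentially no obstacle once either of these inputs is in place; the only cautionary point worth recording is that the concavity hypothesis cannot be dropped. Example \ref{finito no} exhibits the three-point metric space $M=\{0,1,2\}\subseteq\R$ and a strictly convex, non-uniformly-convex Banach space $Y$ for which $(M,Y)$ fails the Lip-BPB property, so finiteness of $M$ alone does not suffice and the requirement that all molecules be (preserved) extreme points is genuinely used.
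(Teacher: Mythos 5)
Your first argument is exactly the paper's proof: Corollary \ref{alpha} applied with the fact from \cite[Example 3.15.a]{articulo1} that $\mathcal{F}(M)$ has property $\alpha$ for every finite pointed metric space $M$. Your alternative route (finiteness plus concavity gives uniform Gromov concavity directly by taking a minimum over the finitely many triples, then invoking Theorem \ref{theorem-Gromovunifconcave}) is also correct and is a reasonable way to bypass the property-$\alpha$ machinery.
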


Another class of metric spaces for which Theorem \ref{theorem-Gromovunifconcave} is applicable is the ultrametric spaces. A metric space is said to be \emph{ultrametric} if the inequality $$d(x,y)\leq \max\bigl\{d(x,z),d(z,y)\bigr\}$$ holds for all $x,y,z\in M$. This class of metric spaces has been deeply studied due to its relations with the problem of finding good embedding of metric spaces, see \cite{Naor} and references therein, for instance. Properties on the Lipschitz-free space over an ultrametric space can be found in \cite{CuthDoucha} and references therein, for instance. It readily follows that every ultrametric space is uniformly Gromov concave, so we get the following consequence of Theorem \ref{theorem-Gromovunifconcave}.

\begin{cor}
If $M$ is a pointed ultrametric space, then $(M,Y)$ has the Lip-BPB property for every Banach space $Y$.
\end{cor}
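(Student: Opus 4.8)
The plan is to deduce this from Theorem~\ref{theorem-Gromovunifconcave}: it suffices to verify that every pointed ultrametric space $M$ is uniformly Gromov concave, and in fact this holds with the universal constant $\varepsilon_0=\frac14$ (indeed, any value strictly below $\frac12$ works). Once this is established, there is nothing left to do, since Theorem~\ref{theorem-Gromovunifconcave} then directly gives the Lip-BPB property of $(M,Y)$ for every Banach space $Y$.

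To check uniform Gromov concavity, I would fix distinct points $x,y,z\in M$ and, without loss of generality, assume $d(x,z)\leq d(y,z)$, so that $\min\{d(x,z),d(y,z)\}=d(x,z)>0$ (positivity because $x\neq z$). Applying the ultrametric inequality to $x,y$ with intermediate point $z$ gives $d(x,y)\leq \max\{d(x,z),d(y,z)\}=d(y,z)$, and substituting this into the definition of the Gromov product yields
\[
(x,y)_z=\frac12\bigl(d(x,z)+d(z,y)-d(x,y)\bigr)\geq \frac12\bigl(d(x,z)+d(z,y)-d(y,z)\bigr)=\frac12\,d(x,z)=\frac12\min\{d(x,z),d(y,z)\}.
\]
Since $\min\{d(x,z),d(y,z)\}>0$, this gives $(x,y)_z>\frac14\min\{d(x,z),d(y,z)\}$ for all distinct $x,y,z\in M$, which is precisely the uniform Gromov concavity condition with $\varepsilon_0=\frac14$.

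There is no genuine obstacle in this argument; the computation above is the whole proof. The only subtlety worth flagging is that one cannot take $\varepsilon_0=\frac12$: in an ultrametric space the two largest of the three pairwise distances among $x,y,z$ coincide, so the estimate $(x,y)_z\geq \frac12\min$ can be an equality (for instance when $d(x,z)<d(y,z)=d(x,y)$). Passing to any strictly smaller constant, and using only that the relevant minimum of distances is positive for distinct points, removes this issue and makes the inequality strict, as required by the definition of uniformly Gromov concave.
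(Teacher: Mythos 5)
Your proposal is correct and follows exactly the route the paper intends: the paper simply asserts that ``it readily follows that every ultrametric space is uniformly Gromov concave'' and then invokes Theorem~\ref{theorem-Gromovunifconcave}, and your computation $(x,y)_z\geq\frac12\min\{d(x,z),d(y,z)\}$ via the ultrametric inequality is precisely the omitted verification. Your remark that the constant must be taken strictly below $\frac12$ (because of the isosceles property of ultrametric triangles) is a correct and worthwhile clarification of why the strict inequality in the definition still holds.
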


Finally, we may also obtain a large class of metric spaces, which includes connected metric spaces, for which the Lip-BPB property is satisfied for every Banach space $Y$: the class of H\"{o}lder metric spaces. We refer the reader to the paper \cite{Kalton04} and the book \cite{wea2} as good references on H\"{o}lder metric spaces.

	\begin{cor}
		Let $M$ be a H\"{o}lder pointed metric space. Then, $(M,Y)$ has the Lip-BPB property for every Banach space $Y$.
	\end{cor}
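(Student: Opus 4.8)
The plan is to invoke Theorem~\ref{theorem-Gromovunifconcave}, so the whole task reduces to checking that every H\"older pointed metric space is uniformly Gromov concave. Write $M=(N,d^\theta)$ with $(N,d)$ a metric space and $0<\theta<1$ fixed, and let $\rho:=d^\theta$ denote the metric of $M$; then the Gromov product taken in $M$ is $(x,y)_z=\tfrac12(\rho(x,z)+\rho(z,y)-\rho(x,y))$. First I would fix distinct $x,y,z$, set $a:=d(x,z)$, $b:=d(z,y)$, $c:=d(x,y)$, and use the triangle inequality for $d$ together with the monotonicity of $t\mapsto t^\theta$ to get $\rho(x,y)=c^\theta\le(a+b)^\theta$, hence
\[
(x,y)_z\ \ge\ \tfrac12\bigl(a^\theta+b^\theta-(a+b)^\theta\bigr).
\]

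The heart of the matter is then the elementary scalar inequality
\[
a^\theta+b^\theta-(a+b)^\theta\ \ge\ (2-2^\theta)\,\min\{a,b\}^\theta \qquad (a,b>0),
\]
which I would prove by assuming $a\le b$, dividing by $a^\theta$, and putting $t=b/a\ge1$: it becomes $\varphi(t)\ge\varphi(1)=2-2^\theta$ for $\varphi(t):=1+t^\theta-(1+t)^\theta$, and since $\varphi'(t)=\theta\bigl(t^{\theta-1}-(1+t)^{\theta-1}\bigr)>0$ (because $\theta-1<0$ makes $s\mapsto s^{\theta-1}$ decreasing), $\varphi$ is strictly increasing on $(0,\infty)$. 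Feeding this back in, and noting $\min\{a,b\}^\theta=\min\{\rho(x,z),\rho(z,y)\}>0$, I would conclude
\[
(x,y)_z\ \ge\ \frac{2-2^\theta}{2}\,\min\{\rho(x,z),\rho(z,y)\}\ >\ \frac{2-2^\theta}{4}\,\min\{\rho(x,z),\rho(z,y)\},
\]
so $M$ is uniformly Gromov concave with $\varepsilon_0=(2-2^\theta)/4$, a constant depending only on $\theta$. Theorem~\ref{theorem-Gromovunifconcave} then gives the Lip-BPB property of $(M,Y)$ for every Banach space $Y$.

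I do not expect any serious obstacle: the argument is a one-line reduction plus the above elementary estimate. The two points to be careful about are that the constant $\varepsilon_0$ must be independent of the triple $(x,y,z)$ (which it is, since it depends only on $\theta$), so that Gromov concavity is genuinely uniform, and that the inequality in the definition of uniform Gromov concavity is strict — which is why I pass from $(2-2^\theta)/2$ to the strictly smaller $(2-2^\theta)/4$, exploiting that distinct points have positive distance.
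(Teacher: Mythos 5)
Your proposal is correct and follows essentially the same route as the paper: reduce to Theorem~\ref{theorem-Gromovunifconcave} by showing that every H\"older metric space is uniformly Gromov concave with a constant depending only on $\theta$, arriving at the same bound $2-2^\theta$ that the paper obtains in Proposition~\ref{holder}. The only (cosmetic) difference is that you establish the scalar inequality $a^\theta+b^\theta-(a+b)^\theta\geq(2-2^\theta)\min\{a,b\}^\theta$ in one stroke via the monotonicity of $\varphi(t)=1+t^\theta-(1+t)^\theta$, whereas the paper uses the auxiliary decreasing functions $f_t$ and a two-case analysis; your handling of the strictness of the inequality in the definition (shrinking the constant to $(2-2^\theta)/4$) is also cleaner than the paper's, whose stated $\varepsilon_0=1-2^\theta$ is evidently a misprint.
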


The result follows from Theorem \ref{theorem-Gromovunifconcave} by using the following proposition.

\begin{prop}\label{holder}
Every H\"{o}lder metric space is uniformly Gromov concave.
\end{prop}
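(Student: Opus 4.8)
The plan is to reduce the statement to a single elementary inequality about the snowflake function $t\mapsto t^{\theta}$. Write the metric of $M$ as $d^{\theta}$, where $d$ is a metric on the same underlying set and $0<\theta<1$, and recall that the Gromov product appearing in the definition of uniform Gromov concavity is computed with respect to the metric $d^{\theta}$. Fix three distinct points $x,y,z\in M$; without loss of generality assume $d(x,z)\leq d(z,y)$, and set $a=d(x,z)$, $b=d(z,y)$, so $0<a\leq b$ and, since $t\mapsto t^{\theta}$ is increasing, $\min\{d(x,z)^{\theta},d(z,y)^{\theta}\}=a^{\theta}$. The triangle inequality for $d$ gives $d(x,y)\leq a+b$, hence $d(x,y)^{\theta}\leq (a+b)^{\theta}$, and therefore
\[2\,(x,y)_z=d(x,z)^{\theta}+d(z,y)^{\theta}-d(x,y)^{\theta}\geq a^{\theta}+b^{\theta}-(a+b)^{\theta}.\]

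So it suffices to show that the strict subadditivity of $t\mapsto t^{\theta}$ is uniform in the sense that
\[a^{\theta}+b^{\theta}-(a+b)^{\theta}\geq (1-\theta)\,a^{\theta}\qquad\text{whenever }0<a\leq b.\]
I would prove this with the mean value theorem applied to $t\mapsto t^{\theta}$ on $[b,a+b]$: there is $\xi\in(b,a+b)$ with $(a+b)^{\theta}-b^{\theta}=\theta\,\xi^{\theta-1}a\leq\theta\,b^{\theta-1}a$, the last inequality because $\theta-1<0$ and $\xi\geq b$. Hence
\[a^{\theta}+b^{\theta}-(a+b)^{\theta}=a^{\theta}-\bigl((a+b)^{\theta}-b^{\theta}\bigr)\geq a^{\theta}-\theta\,b^{\theta-1}a=a^{\theta}\Bigl(1-\theta\,(a/b)^{1-\theta}\Bigr)\geq (1-\theta)\,a^{\theta},\]
using $0<a/b\leq 1$ and $1-\theta>0$ in the final step. (One could instead normalise $b=1$ and use that $t\mapsto \bigl(t^{\theta}+1-(1+t)^{\theta}\bigr)/t^{\theta}$ extends continuously and strictly positively to the compact interval $[0,1]$, but the mean value argument has the advantage of producing the explicit constant $1-\theta$.)

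Combining the two displays yields $(x,y)_z\geq\frac{1-\theta}{2}\min\{d(x,z)^{\theta},d(z,y)^{\theta}\}$ for all distinct $x,y,z\in M$; since this minimum is strictly positive, taking $\varepsilon_0:=\frac{1-\theta}{4}$ (any constant strictly smaller than $\frac{1-\theta}{2}$ works) gives $(x,y)_z>\varepsilon_0\min\{d(x,z)^{\theta},d(z,y)^{\theta}\}$, which is precisely the definition of uniform Gromov concavity. There is no real obstacle in this argument; the only point needing care is obtaining a constant in the subadditivity estimate that is independent of $a$ and $b$, and the reduction via the triangle inequality together with the mean value theorem handles exactly that.
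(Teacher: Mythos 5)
Your proof is correct. The only point that needs checking carefully is the uniform subadditivity estimate $a^{\theta}+b^{\theta}-(a+b)^{\theta}\geq(1-\theta)a^{\theta}$ for $0<a\leq b$, and your mean value theorem argument establishes it cleanly: $(a+b)^{\theta}-b^{\theta}=\theta\xi^{\theta-1}a\leq\theta b^{\theta-1}a$ because $\xi\mapsto\xi^{\theta-1}$ is decreasing, and then $1-\theta(a/b)^{1-\theta}\geq 1-\theta$ since $a\leq b$. The handling of strictness via a slightly smaller $\varepsilon_0$ is fine (the minimum is positive because the points are distinct).

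Your route differs from the paper's in its technical execution, though both reduce the problem to a quantitative strict subadditivity of $t\mapsto t^{\theta}$. You apply the triangle inequality for $d$ at the outset to replace $d(x,y)^{\theta}$ by $(a+b)^{\theta}$, which collapses the problem to a two-variable inequality solved in one stroke by the mean value theorem, yielding the constant $\tfrac{1-\theta}{2}$ with no case analysis. The paper instead keeps all three distances, introduces the auxiliary function $f_t(s)=\bigl(t^{\theta}-s^{\theta}\bigr)/(t-s)^{\theta}$, uses its monotonicity together with the observation $d(y,z)\geq d(x,y)/2$, and splits into the cases $d(x,y)>d(y,z)$ and $d(x,y)\leq d(y,z)$, obtaining the constant $\tfrac{2-2^{\theta}}{2}$. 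Both constants are positive for $0<\theta<1$; neither dominates the other over the whole range of $\theta$. Your argument is shorter and arguably more transparent, at the cost of being slightly less sharp in how it uses the geometry (you discard the exact value of $d(x,y)$ immediately), but for the purposes of this proposition that costs nothing.
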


\begin{proof}
Let $(M,d)$ be a metric space and fix $0<\theta<1$. Consider $\varepsilon_0=1-2^\theta$ and let us show that $(M,d^\theta)$ is uniformly Gromov concave witnessed by $\frac{\varepsilon_0}{2}$. Indeed, given $t>0$ define $f_t \colon [0,t) \longrightarrow \mathbb{R}$ by
\[
f_t(s)=\frac{t^\theta-s^\theta}{(t-s)^\theta}\quad \forall\, s \in [0,t).
\]
	It is easy to see that $f_t$ is strictly decreasing. Besides, for every $t>0$ we have that
	\[
f_t\left (\frac{t}{2}\right )=\frac{t^\theta-(\frac{t}{2})^\theta}{(\frac{t}{2})^\theta}=2^\theta -1.\]
	Take $x, y, z$ distinct points of $M$. We may assume that $d(x,z)\leq d(y,z)$. Consequently, we have that $d(y,z)\geq \frac{d(x,y)}{2}$. We distinguish two cases:

\noindent (1): $d(x,y)>d(y,z)$. In this case, we estimate
		\begin{align*}
\frac{d(x,z)^\theta + d(y,z)^\theta - d(x,y)^\theta}{d(x,z)^\theta} &= 1-\frac{d(x,y)^\theta - d(y,z)^\theta}{d(x,z)^\theta} \\ &= 1-\frac{d(x,y)^\theta - d(y,z)^\theta}{(d(x,y)-d(y,z))^\theta} \frac{(d(x,y)-d(y,z))^\theta}{d(x,z)^\theta} \\ &\geq 1-f_{d(x,y)}(d(y,z))\frac{d(x,z)^\theta}{d(x,z)^\theta}\geq 1-f_{d(x,y)}\left (\frac{d(x,y)}{2}\right )=2-2^\theta.
		\end{align*}
\noindent (2): $d(x,y)\leq d(y,z)$. Here it is enough to note that
		\[  \frac{d(x,z)^\theta + d(y,z)^\theta - d(x,y)^\theta}{d(x,z)^\theta}\geq \frac{d(x,z)^\theta}{d(x,z)^\theta}=1.\qedhere\]
\end{proof}

Let us comment that a particular case of \cite[Example~3.38]{wea2} is that every H\"{o}lder metric space is concave (uniform concave in the notation of that book). Thus, the above proposition improves such a particular case.

	A natural question is the following: does there exist any relationship between the BPB property of $(\mathcal{F}(M),Y)$ and the Lip-BPB property of $(M,Y)$?
	Example \ref{N} partially answers this question in a negative way. Note that, since the Bishop-Phelps-Bollob\'as theorem is valid for every Banach space, we know that the pair $(\mathcal{F}(\mathbb{N}),\mathbb{R})$ has the BPB property. However, in that Example it is shown that $(\mathbb{N},\mathbb{R})$ fails the Lip-BPB property, so the BPBp of $(\mathcal{F}(M),Y)$ does not imply the Lip-BPB property of $(M,Y)$ in general. The next result will show that the Lip-BPB property of $(M,Y)$ does not imply the BPB property of ($\mathcal{F}(M),Y)$.
	
	\begin{prop}\label{finitofalla}
		Let $M$ be a finite pointed metric space with more than two points Then, there exists a Banach space $Y$ such that $(\F(M),Y)$ fails the BPB property.
	\end{prop}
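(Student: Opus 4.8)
The plan is to combine the polyhedral structure of $\mathcal{F}(M)$ with a pathological range space, pushing further the mechanism behind Example \ref{finito no}. Since $M$ is finite, $\Lip(M,\R)=\mathcal{F}(M)^*$ is finite-dimensional with polyhedral unit ball, so $B_{\mathcal{F}(M)}=\overline{\co}(\Mol(M))$ is a polytope (as $\Mol(M)$ is finite) of dimension $n:=|M|-1\geq 2$ all of whose vertices are molecules. I would first fix a facet $F$ of this polytope; since every face of a polytope is exposed, $F$ is exposed by a (necessarily unique) functional $\widehat f$ with $f\in S_{\Lip(M,\R)}$, i.e.\ $F=\{\xi\in B_{\mathcal{F}(M)}:\widehat f(\xi)=1\}$, and $F$ has at least $n\geq 2$ vertices, all lying in $\Mol(M)$. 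Pick a molecule $m_1$ which is a vertex of $F$. The set of norming functionals of $m_1$ is the face of $B_{\Lip(M,\R)}$ dual to the vertex $m_1$, hence a facet of $B_{\Lip(M,\R)}$, of dimension $n-1\geq 1$; in particular it properly contains $\{f\}$, so there is a norming functional $\phi$ of $m_1$ with $\phi\neq f$. A short argument shows $\phi$ cannot be constant on the vertices of $F$: being norm one and (if constant) identically $1$ on the facet $F$, it would expose $F$ and thus equal $f$. Hence some vertex $m_2$ of $F$ satisfies $b:=\widehat\phi(m_2)<\widehat\phi(m_1)=1$ (so $m_1\neq m_2$). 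Let $w$ be the barycentre of the vertices of $F$; then $w\in\operatorname{relint}(F)$, and averaging gives $\widehat\phi(w)\in(-1,1)$.

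Next I would fix the range space $Y$ to be a strictly convex Banach space which is not uniformly convex (such spaces exist and are necessarily infinite-dimensional; e.g.\ $c_0$ under an equivalent strictly convex norm). By failure of uniform convexity there are $\varepsilon_0>0$ and $x_n,y_n\in S_Y$ with $\|x_n+y_n\|\to 2$ and $\|x_n-y_n\|\geq\varepsilon_0$ for all $n$. Consider the norm-one operator $\Psi=(\widehat f,\widehat\phi)\colon\mathcal{F}(M)\to\ell_\infty^2$ and, for each $n$, the operator $T_0^{(n)}\colon\ell_\infty^2\to Y$ with $T_0^{(n)}(1,1)=x_n$ and $T_0^{(n)}(1,-1)=y_n$. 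Writing $(a,b)=s(1,1)+t(1,-1)$ one has $\max\{|a|,|b|\}=|s|+|t|$, so $\|T_0^{(n)}(a,b)\|=\|sx_n+ty_n\|\leq|s|+|t|=\|(a,b)\|_\infty$, whence $\|T_0^{(n)}\|=1$. Put $T_n=T_0^{(n)}\circ\Psi$. Since $\Psi(m_1)=(\widehat f(m_1),\widehat\phi(m_1))=(1,1)$, we get $\|T_n(m_1)\|=\|x_n\|=1$, so $\|T_n\|=1$. Moreover $\Psi(w)=(1,\widehat\phi(w))$, hence $T_n(w)=\alpha x_n+(1-\alpha)y_n$ with $\alpha=\tfrac12(1+\widehat\phi(w))\in(0,1)$; testing against a functional that norms $x_n+y_n$ shows $\|T_n(w)\|\to 1$. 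Finally $T_n(m_1)-T_n(m_2)=T_0^{(n)}(0,1-b)=\tfrac{1-b}{2}(x_n-y_n)$, so $\|T_n(m_1)-T_n(m_2)\|\geq\varepsilon_1:=\tfrac{(1-b)\varepsilon_0}{2}>0$ for all $n$.

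With these in hand I would run the Bishop--Phelps--Bollob\'as argument by contradiction. Using that $F$ is a facet and $w\in\operatorname{relint}(F)$, fix $\rho>0$ so small that every $u\in S_{\mathcal{F}(M)}$ with $\|u-w\|<\rho$ lies in $\operatorname{relint}(F)$, and set $\varepsilon=\min\{\rho,\varepsilon_1/3\}$. Suppose $(\mathcal{F}(M),Y)$ has the BPB property with modulus $\eta(\cdot)$, choose $n$ with $\|T_n(w)\|>1-\eta(\varepsilon)$, and apply the property to $T_n$ and $w$: there are $G\in\operatorname{L}(\mathcal{F}(M),Y)$ with $\|G\|=1$ and $u\in S_{\mathcal{F}(M)}$ with $\|G(u)\|=1$, $\|G-T_n\|<\varepsilon$, $\|u-w\|<\varepsilon$. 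Then $u\in\operatorname{relint}(F)$; picking $z^*\in S_{Y^*}$ with $z^*(G(u))=1$, the functional $G^*z^*$ is norm one and attains its norm at $u$, and since a relative-interior point of a facet has a unique norming functional, namely $\widehat f$, we conclude $G^*z^*=\widehat f$. Hence $z^*(G(m))=\widehat f(m)=1$ for every vertex $m$ of $F$, which forces $\|G(m)\|=1$; strict convexity of $Y$ then yields $G(m_1)=G(m_2)$. But $\|G(m_i)-T_n(m_i)\|<\varepsilon$, so $\varepsilon_1\leq\|T_n(m_1)-T_n(m_2)\|<2\varepsilon\leq\tfrac23\varepsilon_1$, a contradiction.

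I expect the main difficulty to be the polyhedral bookkeeping in the third paragraph: proving cleanly that a relative-interior point of a facet of $B_{\mathcal{F}(M)}$ has a single supporting functional and that nearby unit-sphere points stay in that facet's relative interior — this is exactly what lets the strict convexity of $Y$ bite — together with the elementary but slightly delicate observation that a norming functional of the vertex $m_1$ distinct from $\widehat f$ cannot be constant on the vertices of $F$. The purely Banach-space ingredient, exhibiting a strictly convex but non-uniformly convex $Y$ and extracting the sequences $(x_n),(y_n)$, is standard.
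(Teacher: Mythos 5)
Your argument is correct, but it takes a genuinely different and considerably more hands-on route than the paper. The paper's proof is soft: since $\F(M)$ is finite-dimensional it is isomorphic to a strictly convex space, so if $(\F(M),Y)$ had the BPB property for every $Y$, then \cite[Corollary 3.5]{acklm} would force the extreme points of $B_{\F(M)}$ to be dense in $S_{\F(M)}$; but $B_{\F(M)}=\co(\Mol(M))$ is a polytope with finitely many extreme points, which cannot be dense in the sphere once $\dim\F(M)=|M|-1\geq 2$. You instead re-prove the relevant special case of that corollary from scratch: you exhibit a concrete bad range space (a strictly convex, non-uniformly convex $Y$, exactly the one behind Example \ref{finito no}) and run the Bishop--Phelps--Bollob\'as contradiction directly, replacing the ad hoc geometry of $M=\{0,1,2\}$ by general polytope duality --- a facet $F$ exposed by a unique $\widehat f$, a second norming functional $\phi$ of a vertex $m_1$ (which exists because the dual face of a vertex has dimension $|M|-2\geq 1$) that separates two vertices of $F$, the barycentre $w\in\operatorname{relint}(F)$ at which any nearby norm-attaining $G$ must be normed by $\widehat f$ alone, and strict convexity collapsing $G$ on the vertices of $F$. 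The steps you flag as delicate all check out: the norming functional at a relative-interior point of a facet is unique; points of the sphere close to $w$ stay in $\operatorname{relint}(F)$ because the finitely many other facets are closed sets missing $w$; and a norming functional of $m_1$ constant on the vertices of $F$ would be identically $1$ on the hyperplane $\operatorname{aff}(F)=\{\widehat f=1\}$ and hence equal to $\widehat f$. What your approach buys is a self-contained proof identifying an explicit $Y$; what the paper's buys is brevity, at the cost of importing the cited corollary.
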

	
	\begin{proof}
Assume that $(\mathcal{F}(M),Y)$ has the BPB property for every Banach space $Y$. Being finite-dimensional, $\F(M)$ is isomorphic to a strictly convex Banach space. Then, by \cite[Corollary 3.5]{acklm}, the set of extreme points of $B_{\F(M)}$ is dense in $S_{\F(M)}$. However, we have that $B_{\mathcal{F}(M)}=\co\bigl(\Mol(M)\bigr)$ since $\Mol(M)$ is finite hence compact, so every extreme point of $B_{\F(M)}$ has to be contained in $\Mol(M)$. But, being finite, the set $\Mol(M)$ cannot be dense in $S_{\F(M)}$ if $M$ contains more than two points.
	\end{proof}

It can be deduced from \cite[Theorem 3.2]{km} and Lemma \ref{recdomain} that the space $Y$ above can be taken as a $c_0$-sum of some concrete strictly convex renormings of $\mathcal{F}(M)$.

\begin{ejem}\label{example3.10}
If we consider a concave finite pointed metric space $M$, then $(M,Y)$ has the Lip-BPB property for every Banach space $Y$ by Corollary \ref{alphafinito}, while we may consider a Banach space $Y$ such that $(\mathcal{F}(M),Y)$ fails the BPB property thanks to Proposition \ref{finitofalla}.
\end{ejem}
	
Note that Examples \ref{finito no} and \ref{N} show that it seems like the Lip-BPB property does not hold when the metric space has many nontrivial metric segments. For this reason, and in view of Corollary \ref{alpha}, we could believe that if the metric space is concave or even Gromov concave, $(M,Y)$ may have the Lip-BPB property for all Banach spaces $Y$. However, the next example shows that this does not always happen, even for scalar Lipschitz functions.

\begin{ejem}
There exists a Gromov concave pointed metric space such that $\F(M)$ has the RNP and $(M,\mathbb{R})$ fails the Lip-BPB property.
\end{ejem}

	\begin{proof}
		Let us consider $M=\left\{\left(n,\frac{1}{n^2}\right)\colon n\in \mathbb{N}\right\} \subseteq \mathbb{R}^2$ with the Euclidean metric. This metric space is boundedly compact and every metric segment is trivial, so $M$ is concave by \cite[Proposition 3.34]{wea2}. Furthermore, since $M$ is uniformly discrete, Proposition 5.3 in \cite{lppr} gives that $M$ is Gromov concave. In addition, uniformly discreteness also implies that $\mathcal{F}(M)$ has the RNP \cite[Proposition 4.4]{Kalton04}. We will write $\overline{n}$ to refer to the point $\left(n,\frac{1}{n^2}\right)$ for every $n \in \mathbb{N}$. Fix $0<\varepsilon<\frac{1}{3}$ and suppose that $(M,\R)$ has the Lip-BPB property witnessed by the function $\varepsilon \longmapsto \eta(\varepsilon)$, which we may suppose satisfies $0<\eta(\varepsilon)<\frac{1}{3}$.

		For every $n \in \mathbb{N}$, we define $f_n \colon M \longrightarrow \mathbb{R}$ by
		\[ f_n(\overline{p})=  \left\{
		\begin{array}{ll}
		p-1 & \mbox{ if $p\leq 2n$ } \\
		p-2 & \mbox{ if $p > 2n$} \\
		\end{array} \right. \]
		It is clear that $f_n \in \Lip(M,\mathbb{R})$ and $\|f_n\|_L\leq 1$. Furthermore, given $k >2n$ we have that
		\[ \widehat{f}_n(m_{\overline{k+1},\overline{k}})=\frac{f_n(\overline{k+1})-f_n(\overline{k})}{d(\overline{k+1},\overline{k})} = \frac{1}{\sqrt{1+\left(\frac{1}{k^2} - \frac{1}{(k+1)^2}\right)^2}}, \]
		from which we deduce that $\lim\limits_{k \to \infty} \widehat{f}_n(m_{\overline{k+1},\overline{k}}) = 1$ and so $\|f_n\|_L=1$. Now, let us estimate the value of $\widehat{f}_n$ at the molecule $m_{\overline{3n},\overline{n}}$:
		\[ \widehat{f}_n(m_{\overline{3n},\overline{n}})=\frac{f_n(\overline{3n})-f_n(\overline{n})}{d(\overline{3n},\overline{n})}=\frac{2n-1}{2n} \frac{2n}{\sqrt{(2n)^2+\left(\frac{1}{n^2} - \frac{1}{(3n)^2}\right)^2}}. \]
		Therefore, there exists $n_0 \in \mathbb{N}$ such that for every $n \geq n_0$ we have that $\widehat{f}_{n}(m_{\overline{3n},\overline{n}}) > 1-\eta(\varepsilon)$. Now, the Lip-BPB property of $(M,\R)$ gives $g_n \in \Lip(\mathbb{N},\mathbb{R})$ and a molecule $m_{\overline{p_n},\overline{q_n}}$ such that $$\|g_n\|_L=|\widehat{g}_n(m_{p_n,q_n})|=1, \quad \|f_n-g_n\|_L<\varepsilon, \quad \|m_{\overline{p_n},\overline{q_n}} -m_{\overline{3n},\overline{n}}\|<\varepsilon.$$ Note that since $f_n$ is increasing, $p_n$ must be greater than $q_n$. As we did in the proof of Example \ref{N}, by applying \cite[Lemma 1.3]{articulo1} we obtain that $[2n, 2n+1]\subseteq [q_n,p_n]$.
On the one hand, we have that
		\[\widehat{f}_n(m_{\overline{2n+1},\overline{2n}})=0.\]
		On the other hand, by using Lemma 3.7 in \cite{articulo1}, it follows that

		\[\widehat{g}_n(m_{\overline{p_n},\overline{2n+1}})\geq 1-2\frac{(\overline{p_n},\overline{q_n})_{\overline{2n+1}}}{d(\overline{2n+1},\overline{p_n})}\geq 1-\frac{1}{(2n+1)^2d(\overline{2n+1},\overline{p_n})},
\]
		which implies that
		\begin{align*} \widehat{g}_n(m_{\overline{2n+1},\overline{2n}})&=\widehat{g}_n(m_{\overline{p_n},\overline{2n+1}})\frac{d(\overline{2n+1},\overline{p_n})}{d(\overline{2n},\overline{2n+1})} - \widehat{g}_n(m_{\overline{p_n},\overline{2n}})\frac{d(\overline{2n},\overline{p_n})}{d(\overline{2n},\overline{2n+1})}\\
		& \geq \widehat{g}_n(m_{\overline{p_n},\overline{2n+1}})\frac{d(\overline{2n+1},\overline{p_n})}{d(\overline{2n},\overline{2n+1})} - \frac{d(\overline{2n},\overline{p_n})}{d(\overline{2n},\overline{2n+1})}\\
		& \geq \frac{(2n+1)^2d(\overline{2n+1},\overline{p_n}) - 1 - (2n+1)^2d(\overline{2n},\overline{p_n})}{(2n+1)^2d(\overline{2n},\overline{2n+1})}\\
		&\geq \frac{d(\overline{2n+1},\overline{p_n})-d(\overline{2n},\overline{p_n})}{d(\overline{2n},\overline{2n+1})} - \frac{1}{(2n+1)^2}.
		\end{align*}	
		A simple calculation shows that we may take $n_1>n_0 \in \mathbb{N}$ such that $\widehat{g}_n(m_{\overline{2n+1},\overline{2n}})\geq\frac{1}{2}$ for every $n \geq n_1$. Finally, for $n \geq n_1$ observe that
		\[ \|g_n-f_n\|_L\geq \widehat{g}_n(m_{\overline{2n+1},\overline{2n}})- \widehat{f}_n(m_{\overline{2n+1},\overline{2n}}) \geq \frac{1}{2}-0=\frac{1}{2},\]
		a contradiction.
	\end{proof}	

	\section{From scalar functions to vector-valued maps and viceversa}\label{sec-fromscalartovectors}
Our aim here is to show when we may pass the Lip-BPB property for vector-valued maps to the Lip-BPB property for scalar functionals and, conversely, from the Lip-BPB property for scalar functionals to the Lip-BPB property for some vector-valued maps. In the first case, the result is optimal.

	\begin{prop}\label{miguel bollobas}
		Let $M$ be a pointed metric space. Suppose that there exists a Banach space $Y\neq 0$ such that $(M,Y)$ has the Lip-BPB property. Then, $(M,\mathbb{R})$ has the Lip-BPB property.
	\end{prop}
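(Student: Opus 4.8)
The plan is to extract the scalar Lip-BPB property from the vector-valued one by fixing a norm-one functional $y^*\in S_{Y^*}$ and a norm-one vector $y_0\in S_Y$ with $y^*(y_0)$ close to $1$, and using these to transport scalar Lipschitz functions to $Y$-valued ones and back. Concretely, given $\varepsilon>0$, I would let $\eta(\varepsilon/2)>0$ be the constant coming from the Lip-BPB property of $(M,Y)$ (shrinking if necessary so that $\eta(\varepsilon/2)<\varepsilon/2$), and show that a slightly smaller $\eta$ works for $(M,\R)$. Given a norm-one $f\in\Lip(M,\R)$ and $p\neq q$ with $f(p)-f(q)>(1-\eta)d(p,q)$ (we may assume $f(p)-f(q)>0$ after possibly changing the sign of $f$), define $F\in\Lip(M,Y)$ by $F(x)=f(x)\,y_0$. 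Then $\|F\|_L=\|f\|_L=1$ and $\|F(p)-F(q)\|=|f(p)-f(q)|\,\|y_0\|>(1-\eta)d(p,q)$, so the hypothesis of the Lip-BPB property of $(M,Y)$ is met.

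Applying that property, I obtain $G\in\Lip(M,Y)$ and $r\neq s$ with $\frac{\|G(r)-G(s)\|}{d(r,s)}=\|G\|_L=1$, $\|G-F\|_L<\varepsilon/2$, and $\frac{d(p,r)+d(q,s)}{d(p,q)}<\varepsilon/2$. Now define $g\colon M\longrightarrow\R$ by $g(x)=y^*(G(x))$. Clearly $g\in\Lip(M,\R)$ with $\|g\|_L\leq\|y^*\|\,\|G\|_L\leq 1$, and $\|g-f\|_L\leq\|G-F\|_L<\varepsilon/2<\varepsilon$ since $y^*\circ F=f$ (because $y^*(y_0)$ can be taken equal to $1$ by Hahn--Banach, choosing $y^*$ to norm $y_0$). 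The pair $(r,s)$ is the natural candidate at which $g$ should strongly attain its norm; the remaining work is to check $\|g\|_L=1$ and that it is attained at $m_{r,s}$, then normalise.

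The main obstacle is that $g$ as defined need not have norm exactly $1$, nor need it attain its norm at $m_{r,s}$: we only know $\|G(r)-G(s)\|=d(r,s)$, not that $y^*$ preserves this. The fix is to replace $y^*$ by a functional $z^*\in S_{Y^*}$ chosen so that $z^*(G(r)-G(s))=\|G(r)-G(s)\|=d(r,s)$; then $g(x):=z^*(G(x))$ satisfies $\|g\|_L\leq 1$ and $\frac{g(r)-g(s)}{d(r,s)}=1$, so $\|g\|_L=1$ and $g$ strongly attains its norm at $(r,s)$. The only price is that now $z^*\circ F=z^*(y_0)\,f$ need not equal $f$; but since $\|G-F\|_L<\varepsilon/2$ and $\|G(r)-G(s)\|$ is within $\varepsilon/2\cdot d(r,s)$ of $\|F(r)-F(s)\|$, one checks that $z^*(y_0)$ is close to $1$ (it is at least $1-\varepsilon/2$, say, after a short estimate using that $F$ has values along $y_0$ and $G$ is uniformly close to $F$), hence $\|g-f\|_L\leq\|g-z^*(y_0)f\|_L+\|z^*(y_0)f-f\|_L\leq\|z^*\circ(G-F)\|_L+|1-z^*(y_0)|<\varepsilon$ provided $\eta$ (and hence the initial gap) was chosen small enough. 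One then carefully bookkeeps the constants: start from $\varepsilon'<\varepsilon$, run the $(M,Y)$ machinery with tolerance $\varepsilon'$, and verify the three scalar conclusions $\frac{g(r)-g(s)}{d(r,s)}=\|g\|_L=1$, $\|g-f\|_L<\varepsilon$, and $\frac{d(p,r)+d(q,s)}{d(p,q)}<\varepsilon$ all hold. The distance estimate on $(p,q)$ versus $(r,s)$ transfers verbatim since it does not involve $Y$ at all.
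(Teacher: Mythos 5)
Your proposal is correct and takes essentially the same route as the paper's proof: compose $f$ with a fixed $y_0\in S_Y$ to get $F$, apply the vector-valued Lip-BPB property, pick $z^*\in S_{Y^*}$ norming $G(r)-G(s)$ so that $g=z^*\circ G$ strongly attains its norm at $m_{r,s}$, and control $\|g-f\|_L$ by showing $z^*(y_0)\geq 1-\varepsilon/2$ via $z^*\circ F=z^*(y_0)f$ and the closeness of $G$ to $F$. The only differences are bookkeeping.
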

	
	\begin{proof}
		Let $Y$ be a Banach space such that $(M,Y)$ has the Lip-BPB property. Fix $\varepsilon>0$ and consider $\eta(\varepsilon)$ the constant associated to the Lip-BPB property of $(M,Y)$. Let us consider $f \in \Lip(M,\mathbb{R})$ with $\|f\|_L=1$ and $m \in \Mol(M)$ such that $\widehat{f}(m)>1-\eta(\frac{\varepsilon}{2})$. Pick $y_0\in S_Y$ and define $F\in \Lip(M,Y)$ by
		\[ F(p)=f(p)y_0 \quad \forall \, p \in M. \]
Then, we have that $\|F\|_L=1$ and $\|\widehat{F}(m)\|>1-\eta(\frac{\varepsilon}{2})$. So, by hypothesis, there exist $G \in \Lip(M,Y)$ and $u \in \Mol(M)$ satisfying that
		\[ \|\widehat{G}(u)\|=\|G\|_L=1, \quad \|F-G\|_L< \frac{\varepsilon}{2}, \quad \|m-u\|<\frac{\varepsilon}{2}. \]
		Now, take $y^* \in S_{Y^*}$ such that $y^*\bigl (\widehat{G}(u)\bigr )=1$ and note that
		\[\|y^*(y_0)f-y^*\circ G\|_L=\|y^*\circ F- y^*\circ G\|_L\leq \|y^*\| \|F-G\|_L<\frac{\varepsilon}{2}. \]
		This implies that
		\[ y^*(y_0)\geq y^*(y_0)\widehat{f}(u)\geq y^*(\widehat{G}(u))- |y^*(y_0)\widehat{f}(u) - y^*(\widehat{G}(u))| \geq 1-\frac{\varepsilon}{2}. \]
		Therefore, writing $g=y^*\circ G \in \Lip(M,\mathbb{R})$, we have that
		\begin{equation*}
|\widehat{g}(u)|=\|g\|_L=1,\qquad \|g-f\|_L \leq \|g-y^*(y_0)f\|_L+\|y^*(y_0)f-f\|_L<\frac{\varepsilon}{2} + \frac{\varepsilon}{2} =\varepsilon.
		\end{equation*}
As we already know that $\|m-u\|<\varepsilon$, we have that $(M,\mathbb{R})$ has the Lip-BPB property.
	\end{proof}

	We may state an analogous result for the density of strongly norm attaining Lipschitz maps.
	
	\begin{prop}\label{miguel} Let $M$ be a pointed metric space. Suppose that there exists a Banach space $Y\neq 0$ such that $\SA(M,Y)$ is norm-dense in $\Lip(M,Y)$. Then, \[\overline{\SA(M,\mathbb{R})}=\Lip(M,\mathbb{R}).\]
	\end{prop}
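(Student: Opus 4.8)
The plan is to mimic the argument of Proposition \ref{miguel bollobas}, dropping the quantitative control on the points and keeping only the approximation in norm. First I would fix $f \in \Lip(M,\R)$ with $\|f\|_L = 1$ and let $\varepsilon > 0$ be arbitrary; the goal is to produce $g \in \SA(M,\R)$ with $\|g - f\|_L < \varepsilon$. Pick any $y_0 \in S_Y$ and set $F(p) = f(p) y_0$ for all $p \in M$, so that $F \in \Lip(M,Y)$ with $\|F\|_L = \|f\|_L = 1$. By the density hypothesis there exist $G \in \SA(M,Y)$ and a molecule $u \in \Mol(M)$ with $\|\widehat G(u)\| = \|G\|_L$ and $\|F - G\|_L < \varepsilon/2$; normalising, we may assume $\|G\|_L = 1$.

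The next step is to scalarise $G$ correctly. Choose $y^* \in S_{Y^*}$ with $y^*(\widehat G(u)) = \|\widehat G(u)\| = 1$ and put $g_0 = y^* \circ G \in \Lip(M,\R)$. Then $\|g_0\|_L \le 1$ and $|\widehat{g_0}(u)| = |y^*(\widehat G(u))| = 1$, so in fact $\|g_0\|_L = 1$ and $g_0$ strongly attains its norm at $u$, i.e.\ $g_0 \in \SA(M,\R)$. Moreover
\[
\|g_0 - y^*(y_0) f\|_L = \|y^* \circ G - y^* \circ F\|_L \le \|y^*\|\,\|G - F\|_L < \frac{\varepsilon}{2}.
\]
Evaluating at the molecule $u$ gives $|y^*(y_0)| \ge |y^*(y_0)\widehat f(u)| \ge |\widehat{g_0}(u)| - \varepsilon/2 = 1 - \varepsilon/2$, hence $|y^*(y_0) - \sign(y^*(y_0))| \le \varepsilon/2$ and so $\|y^*(y_0) f - f'\|_L \le \varepsilon/2$ where $f' := \sign(y^*(y_0)) f$. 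Therefore $g := \sign(y^*(y_0)) g_0$ still lies in $\SA(M,\R)$ with $\|g\|_L = 1$, and
\[
\|g - f\|_L \le \|g - y^*(y_0)f\|_L + \|y^*(y_0)f - f\|_L < \frac{\varepsilon}{2} + \frac{\varepsilon}{2} = \varepsilon.
\]
(If one prefers to avoid the sign bookkeeping, one can instead arrange $y^*(y_0) \ge 0$ at the outset by replacing $y_0$ with $-y_0$ if necessary, since $\|F\|_L$ is unchanged; this is the route taken in Proposition \ref{miguel bollobas}, where $\widehat f(m) > 0$ forces the sign to be positive automatically.)

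Since $\varepsilon > 0$ and $f$ were arbitrary, every norm-one element of $\Lip(M,\R)$, and hence by scaling every element of $\Lip(M,\R)$, lies in the norm-closure of $\SA(M,\R)$, which proves $\overline{\SA(M,\R)} = \Lip(M,\R)$. I do not expect a serious obstacle here: the only subtlety relative to the Bishop--Phelps--Bollob\'as version is that we no longer get to start from a molecule at which $f$ almost attains its norm, so we cannot argue as in Proposition \ref{miguel bollobas} that the relevant scalar $y^*(y_0)$ is automatically positive; this is handled by the elementary sign adjustment above (or by the $\pm y_0$ trick), and otherwise the proof is a verbatim transcription of the earlier one with all the $\eta(\cdot)$-bookkeeping on the points deleted.
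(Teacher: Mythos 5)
Your proof is correct and follows essentially the same route as the paper's (reduce to the vector-valued setting via $F = f\otimes y_0$, scalarise the strongly norm-attaining $G$ by a functional norming $\widehat G(u)$, and compare); moreover, your explicit sign adjustment is exactly the point that needs care here, since unlike in Proposition \ref{miguel bollobas} there is no initial molecule at which $f$ is large to force $y^*(y_0)>0$, and the paper's own "repeating the argument" glosses over this. The only blemish is a typo in your final display, where $\|y^*(y_0)f-f\|_L$ should read $\|y^*(y_0)f-f'\|_L$ with $f'=\sign(y^*(y_0))f$ (equivalently, conclude $\|g_0-f'\|_L<\varepsilon$ and apply the isometry $h\mapsto \sign(y^*(y_0))h$), as already set up correctly in the sentence preceding it.
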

	
	\begin{proof}
		Fix $\varepsilon>0$ and consider $f \in \Lip(M,\mathbb{R})$, which we may assume to have norm one. If we define $F$ as in Proposition \ref{miguel bollobas} then, by hypothesis, there exist $G \in \SA(M,Y)$ and $m \in \Mol(M)$ satisfying
		\[ \|\widehat{G}(m)\|=\|G\|_L=1, \quad \|F-G\|_L<\frac{\varepsilon}{2}. \]
		Taking $y^* \in S_{Y^*}$ such that $y^*(\widehat{G}(m))=1$ and repeating the argument used in Proposition \ref{miguel bollobas}, we obtain that
		\[\|y^*\circ G-y^*(y_0)f\|_L<\frac{\varepsilon}{2},\quad y^*(y_0)\geq 1-\frac{\varepsilon}{2}. \]
		Consequently, we have that $g=y^*\circ G \in \Lip(M,\mathbb{R})$ satisfies that
		\[\|g - f\|_L \leq \|g - y^*(y_0)f\|_L+\|y^*(y_0)f-f\|_L < \frac{\varepsilon}{2} + |1-y^*(y_0)|<\varepsilon,\]
		and that $\widehat{g}(m)=\|g\|_L=1$, so $g\in \SA(M,\mathbb{R})$.
	\end{proof}

	Our next aim in this section is to study the converse problem of passing from the Lip-BPB property for scalar functionals to some vector-valued maps. Actually, we present a sufficient condition on $Y$ assuring that a pair $(M,Y)$ has the Lip-BPB property when $(M,\mathbb{R})$ does. The following definition is introduced in \cite{lindens} by Lindenstrauss.
	
	\begin{defi}\label{beta}
		Let $Y$ be a Banach space. We will say that $Y$ has \emph{property $\beta$} if there is a set $\{(y_\lambda^*,y_\lambda)\colon \lambda \in \Lambda\} \subset Y^*\times Y$, and a constant $0\leq \rho<1$ satisfying
		\begin{enumerate}
			\item $\|y_\lambda^*\|=\|y_\lambda\|=y_\lambda^*(y_\lambda)=1$ for every $\lambda \in \Lambda$.
			\item $|y^*_\lambda (y_\mu)|\leq \rho$ for every $\lambda \neq \mu \in \Lambda$.
			\item $\|y\|=\sup\{|y_\lambda^*(y)|\colon \lambda \in \Lambda \}$ for every $y \in Y$.
		\end{enumerate}
		\end{defi}
	
Examples of Banach spaces with property $\beta$ are the finite-dimensional spaces whose unit ball is a polyhedron and those spaces $Y$ such that $c_0\subset Y \subset \ell_\infty$ (canonical copies). Besides, Partington proved in \cite{Partington} that every Banach space can be renormed to satisfy property $\beta$. It is convenient to comment that this property $\beta$ is somehow dual to property $\alpha$ (see \cite[Proposition 1.4]{Schachermayer}). The proof of the next result is based on \cite[Theorem 2.2]{BPBp}.

	\begin{prop}\label{propbeta}
		Let $M$ be a pointed metric space such that $(M,\mathbb{R})$ has the Lip-BPB property and let $Y$ be a Banach space satisfying property $\beta$. Then, $(M,Y)$ has the Lip-BPB property.
	\end{prop}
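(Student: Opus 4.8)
The plan is to carry the argument of \cite[Theorem~2.2]{BPBp} (property $\beta$ implies the BPB property) over to the Lipschitz setting, working throughout with operators $\widehat F\in\operatorname{L}(\F(M),Y)$ and molecules rather than with Lipschitz maps and pairs of points (this is legitimate by the Remark following Definition~\ref{Lip-BPBp}). Fix $\{(y_\lambda^*,y_\lambda)\colon\lambda\in\Lambda\}\subseteq Y^*\times Y$ and $0\leq\rho<1$ witnessing property $\beta$ for $Y$. Given $0<\varepsilon<1$, I would first fix an auxiliary parameter $\varepsilon'>0$ of the order of $\varepsilon(1-\rho)$, let $\eta'(\varepsilon')>0$ be the constant given by the Lip-BPB property of $(M,\R)$, and then choose $\eta(\varepsilon)>0$ smaller than both $\eta'(\varepsilon')$ and a suitable multiple of $\varepsilon(1-\rho)$; the precise bookkeeping is deferred to the end.

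Let $\widehat F\in\operatorname{L}(\F(M),Y)$ with $\|\widehat F\|=1$ and $m\in\Mol(M)$ with $\|\widehat F(m)\|>1-\eta(\varepsilon)$. By condition~(3) of property $\beta$ there is $\lambda_0\in\Lambda$ with $|y_{\lambda_0}^*(\widehat F(m))|>1-\eta(\varepsilon)$, and replacing the molecule $m=m_{p,q}$ by $-m=m_{q,p}\in\Mol(M)$ if necessary, I may assume $y_{\lambda_0}^*(\widehat F(m))>1-\eta(\varepsilon)$. Put $\widehat f:=y_{\lambda_0}^*\circ\widehat F\in\F(M)^*$ (the functional attached to the scalar Lipschitz map $y_{\lambda_0}^*\circ F$), so that $\|\widehat f\|\leq1$ and $\widehat f(m)>1-\eta(\varepsilon)$, whence $1-\eta(\varepsilon)<\|\widehat f\|\leq1$. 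Applying the Lip-BPB property of $(M,\R)$ to $\widehat g:=\widehat f/\|\widehat f\|$ (which satisfies $\widehat g(m)\geq\widehat f(m)>1-\eta(\varepsilon)\geq1-\eta'(\varepsilon')$) yields $\widehat{g}_0\in\F(M)^*$ and $u\in\Mol(M)$ with $\|\widehat{g}_0\|=|\widehat{g}_0(u)|=1$, $\|\widehat{g}_0-\widehat g\|<\varepsilon'$ and $\|m-u\|<\varepsilon'$; a short positivity argument (using that $\widehat g(m)$ is close to $1$, that $\|m-u\|$ is small, and that $\eta(\varepsilon)+2\varepsilon'<1$) forces $\widehat{g}_0(u)=1$. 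Record that $\delta:=\|\widehat{g}_0-\widehat f\|\leq\|\widehat{g}_0-\widehat g\|+\|\widehat g-\widehat f\|<\varepsilon'+\eta(\varepsilon)$ is small.

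The crux is to lift the scalar solution $\widehat{g}_0$ to an operator that attains its norm \emph{exactly} at $u$, not merely approximately. I would look for $\widehat G\in\operatorname{L}(\F(M),Y)$ with $y_{\lambda_0}^*\circ\widehat G=\widehat{g}_0$ while keeping $\|y_\mu^*\circ\widehat G\|\leq1$ for every $\mu\neq\lambda_0$: then condition~(3) of property $\beta$ gives $\|\widehat G\|=\sup_\mu\|y_\mu^*\circ\widehat G\|=1$, and $\|\widehat G(u)\|\geq|y_{\lambda_0}^*(\widehat G(u))|=\widehat{g}_0(u)=1$, so $\widehat G$ attains its norm at the molecule $u$. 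The candidate is
\[
\widehat G(x):=t\,\widehat F(x)+\bigl(\widehat{g}_0(x)-t\,\widehat f(x)\bigr)y_{\lambda_0}\qquad(x\in\F(M)),
\]
for a scalar $t\in(0,1]$ chosen slightly below $1$. One has $y_{\lambda_0}^*\circ\widehat G=\widehat{g}_0$ for every $t$, and for $\mu\neq\lambda_0$ the estimate $|y_\mu^*(y_{\lambda_0})|\leq\rho$ gives $\|y_\mu^*\circ\widehat G\|\leq t+\rho\bigl(\delta+(1-t)\bigr)$, which equals $1$ precisely for $t=1-\frac{\delta\rho}{1-\rho}$; this $t$ lies in $(0,1)$ once $\delta<\frac{1-\rho}{\rho}$ (guaranteed by the choice of constants), with the convention $t=1$ when $\rho=0$.

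Finally I would verify the approximation estimates. Writing $\widehat F(x)-\widehat G(x)=(1-t)\widehat F(x)-\bigl(\widehat{g}_0(x)-t\,\widehat f(x)\bigr)y_{\lambda_0}$ and using $\|\widehat{g}_0-t\,\widehat f\|\leq\delta+(1-t)$ yields $\|\widehat F-\widehat G\|\leq2(1-t)+\delta=\delta\,\frac{1+\rho}{1-\rho}<\bigl(\varepsilon'+\eta(\varepsilon)\bigr)\frac{1+\rho}{1-\rho}$, which is $<\varepsilon$ for the chosen $\varepsilon'$ and $\eta(\varepsilon)$, while $\|m-u\|<\varepsilon'<\varepsilon$. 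Hence $\widehat G$ and $u$ meet the requirements of the operator reformulation of the Lip-BPB property, and $(M,Y)$ has the Lip-BPB property. The only genuinely delicate point is the one described in the third paragraph — producing an operator that attains its norm \emph{exactly} rather than approximately; the free parameter $t$ is what makes it work, trading a little of the $\widehat F$-component against the $y_{\lambda_0}$-component so that no index $\mu\neq\lambda_0$ can overtake $\lambda_0$, and everything else is routine control of constants.
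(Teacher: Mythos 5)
Your proof is correct and follows essentially the same route as the paper's: both reduce to the scalar case via $y_{\lambda}^*\circ\widehat F$, apply the Lip-BPB property of $(M,\R)$, and lift back with a rank-one perturbation in the direction $y_{\lambda}$, using condition (2) of property $\beta$ to prevent the other coordinate functionals from overtaking the $\lambda$-th one. The only difference is cosmetic: the paper inflates the $\lambda$-component by the factor $1+\frac{\varepsilon}{2}$ so that it strictly dominates, whereas you deflate the remaining part by a factor $t<1$ calibrated so that $\|\widehat G\|=1$ exactly, which spares you the final renormalization that the paper's construction implicitly requires.
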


	\begin{proof}
Suppose that $(M,\R)$ has the Lip-BPB property witnessed by a function $\varepsilon\longmapsto \eta'(\varepsilon)$.
		Fix $\varepsilon>0$ and us consider $0<\gamma<\frac{\varepsilon}{2}$ satisfying
		\[ 1+\rho\left ( \frac{\varepsilon}{2}+\gamma\right )<\left (1+\frac{\varepsilon}{2}\right )(1-\gamma).\]
		Consider $\widehat{F}\in \operatorname{L}(\F(M),Y)$ with $\|F\|_L=1$, $m \in \Mol(M)$ such that $\|\widehat{F}(m)\|>1-\eta'(\gamma)$. Take $\lambda \in \Lambda$ such that $|y^*_\lambda(\widehat{F}(m))|>1-\eta(\gamma)$. By hypothesis, there exist $\widehat{g} \in \F(M)^*$ and $u \in \Mol(M)$ such that
		\[
		\|\widehat{g}(u)\|=\|\widehat{g}\|=\|\widehat{F}^*(y^*_\lambda)\|>1-\gamma, \quad \|\widehat{g} -\widehat{F}^*(y^*_\lambda)\|<\gamma, \quad \|m-u\|<\gamma.
		\]
		Define the operator $\widehat{G} \in \operatorname{L}(\mathcal{F}(M),Y)$ by
		\[ \widehat{G}(x)= \widehat{F}(x)+ \left [\left (1+\frac{\varepsilon}{2}\right )\widehat{g}(x)-\widehat{F}^*(y_\lambda^*)(x)\right ]y_\lambda\quad \forall \, x \in \mathcal{F}(M). \]
		Then, we have that
		\[ \|\widehat{G} -\widehat{F}\|\leq \frac{\varepsilon}{2}\|\widehat{g}\|+\|\widehat{g}-\widehat{F}^*(y_\lambda^*)\|\leq \frac{\varepsilon}{2}+\gamma<\varepsilon.\]
		Therefore, we will finish if we prove that $\widehat{G}$ attains its norm at $u \in \Mol(M)$. Since for every $y^* \in Y^*$ one has
		\[ 	\widehat{G}^*(y^*)=\widehat{F}^*(y^*)+y^*(y_\lambda)\left (\frac{\varepsilon}{2}\widehat{g}+\widehat{g}-\widehat{F}^*(y^*_\lambda)\right ),\]
		given $\mu\in \Lambda$, $\mu\neq \lambda$, we have that
		\[ \|\widehat{G}^*y_\mu^*\|\leq 1+\rho\left(\frac{\varepsilon}{2}+\gamma \right) \leq 1+\rho\left (\frac{\varepsilon}{2}+\gamma\right ).\]
		On the other hand, $\widehat{G}^*(y_\lambda^*)=\left (1+\frac{\varepsilon}{2}\right )\widehat{g}$, so
		\[ \|\widehat{G}^*(y_\lambda^*)\|=\left (1+\frac{\varepsilon}{2}\right )\|\widehat{g}\|>\left (1+\frac{\varepsilon}{2}\right )(1-\gamma)>1+\rho\left (\frac{\varepsilon}{2}+\gamma\right ).	\]
		Consequently, $\|\widehat{G}^*\|=\|\widehat{G}^*(y_\lambda^*)\|$, but $\widehat{G}^*(y_\lambda^*)$ is a multiple of $\widehat{g}$, so it attains its norm as a functional on $\mathcal{F}(M)$ at $u$, hence $\widehat{G}$ attains its norm at the molecule $u \in \Mol(M)$, as desired.
	\end{proof}

\begin{remark}\label{remark:betaonlyrho}
It can be shown from the above proof that when $(M,\mathbb{R})$ has the Lip-BPB property and $Y$ is a Banach space satisfying property $\beta$ with constant $\rho$, then the pair $(M,Y)$ has the Lip-BPB property witnessed by a function $\varepsilon \longmapsto \eta(\varepsilon)$ which only depends on $M$ and on $\rho$, not on the particular space $Y$.
\end{remark}

	We could obtain an analogous result for the density of $\SA(M,Y)$. In fact, it is possible to get a more general result in this case. In \cite{aap}  a property called quasi-$\beta$ for a Banach space $Y$ is introduced as a property weaker than property $\beta$ which still implies that $\overline{\NA(X,Y)}=\operatorname{L}(X,Y)$ for every Banach space $X$.
	
	\begin{defi}
		We will say that a Banach space $Y$ has property quasi-$\beta$ if there exist a subset $A \subset S_{Y^*}$, a mapping $\sigma \colon A \longrightarrow S_Y$, and a real-valued function $\rho$ on $A$ satisfying the following conditions:
		\begin{enumerate}
			\item $y^*(\sigma(y))=1$ for every $y^* \in A$.
			\item $|z^*(\sigma(y^*))|\leq \rho(y^*)<1$ for every $y^*,z^* \in A$, $y^*\neq z^*$.
			\item For every extreme point $e^*$ in the unit ball of $Y^*$, there is a subset $A_{e^*}$ of $A$ and a scalar $t$ with $|t|=1$	such that $te^*$ lies in the $w^*$-closure of $A_{e^*}$ and $\sup\{\rho(y^*)\colon y^*\in A_{e^*}\}<1$.
		\end{enumerate}
	\end{defi}

	Every Banach space having property $\beta$ will also have property quasi-$\beta$. Moreover, property quasi-$\beta$ is stable under $c_0$-sums (see \cite[Proposition 4]{aap}), so $c_0$-sums of Banach spaces having property $\beta$ have property quasi-$\beta$, but may have not property $\beta$. In addition, there are finite-dimensional Banach spaces having property quasi-$\beta$ but not $\beta$ (see \cite[Example 5]{aap}).

	The next result, based on the proof of Theorem 2 in \cite{aap}, shows that this property also implies the density of strongly norm-attaining Lipschitz maps from the density in the scalar case.

	\begin{prop}\label{quasi-beta}
		Let $M$ be a pointed metric space such that $\SA(M,\mathbb{R})$ is norm dense in $\Lip(M,\mathbb{R})$ and let $Y$ be a Banach space having property quasi-$\beta$. Then, we have that
		\[ \overline{\SA(M,Y)}=\Lip(M,Y).\]
	\end{prop}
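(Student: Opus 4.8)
The plan is to follow closely the scheme of Proposition~\ref{propbeta} (which itself adapts \cite[Theorem~2.2]{BPBp}), only now replacing the Bishop--Phelps--Bollob\'as step in the scalar variable by a plain density argument, exactly as the passage from property $\beta$ to property quasi-$\beta$ replaces the quantitative BPB estimate by a density estimate in the linear setting of \cite{aap}. So let $\{(y^*,\sigma(y^*),\rho(y^*))\}_{y^*\in A}$ witness property quasi-$\beta$ for $Y$, fix $\varepsilon>0$ and a norm-one $\widehat F\in\operatorname{L}(\F(M),Y)$. First I would pick a vector $x_0\in S_{\F(M)}$ with $\|\widehat F(x_0)\|$ very close to $1$ and a $w^*$-extreme point $e^*$ of $B_{Y^*}$ with $|e^*(\widehat F(x_0))|$ close to $1$ (using that the norm of $\widehat F(x_0)$ is the sup of $|e^*(\widehat F(x_0))|$ over extreme points of $B_{Y^*}$); then, using condition~(3) of quasi-$\beta$, choose $y^*\in A_{e^*}\subseteq A$ with $\rho(y^*)\le\rho_0<1$ uniformly on $A_{e^*}$ and with $|y^*(\widehat F(x_0))|$ still close to $1$, i.e.\ $\|\widehat F^*(y^*)\|$ close to $1$. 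This is where the $w^*$-density clause of quasi-$\beta$ is used and it is the one genuinely new ingredient compared with Proposition~\ref{propbeta}.

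Next I would invoke the scalar hypothesis $\overline{\SA(M,\R)}=\Lip(M,\R)$: the functional $\widehat F^*(y^*)\in\F(M)^*$ can be approximated in norm, as closely as we wish, by some $\widehat g\in\F(M)^*$ that strongly attains its norm at a molecule $u\in\Mol(M)$, with $\|\widehat g\|=|\widehat g(u)|$ close to $\|\widehat F^*(y^*)\|$, hence close to $1$. (Here one should normalise carefully so that the relevant inequalities go through; one may also replace $\widehat g$ by $|\widehat g(u)|^{-1}\widehat g$ to make $\widehat g(u)=\|\widehat g\|$, or simply track signs.) Then, exactly as in Proposition~\ref{propbeta}, define
\[
\widehat G(x)=\widehat F(x)+\Bigl[\bigl(1+\tfrac{\varepsilon}{2}\bigr)\widehat g(x)-\widehat F^*(y^*)(x)\Bigr]\,\sigma(y^*)\qquad\forall\,x\in\F(M),
\]
so that $\|\widehat G-\widehat F\|\le\tfrac{\varepsilon}{2}\|\widehat g\|+\|\widehat g-\widehat F^*(y^*)\|<\varepsilon$ once the approximation above is tight enough. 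Computing the adjoint, $\widehat G^*(z^*)=\widehat F^*(z^*)+z^*(\sigma(y^*))\bigl(\tfrac{\varepsilon}{2}\widehat g+\widehat g-\widehat F^*(y^*)\bigr)$ for $z^*\in Y^*$; in particular $\widehat G^*(y^*)=(1+\tfrac{\varepsilon}{2})\widehat g$, whose norm is $>(1+\tfrac{\varepsilon}{2})(1-\gamma)$ for a suitably small auxiliary $\gamma$.

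To finish, I would show $\|\widehat G\|=\|\widehat G^*\|=\|\widehat G^*(y^*)\|$, so that $\widehat G^*(y^*)=(1+\tfrac{\varepsilon}{2})\widehat g$ is a scalar multiple of $\widehat g$ and therefore attains its norm as a functional on $\F(M)$ at the molecule $u$, whence $\widehat G$ strongly attains its (Lipschitz) norm at $u$, i.e.\ $G\in\SA(M,Y)$ with $\|F-G\|_L<\varepsilon$. The norm of $\widehat G^*$ is computed as a supremum over $B_{Y^*}$, and since $\|y\|=\sup\{|e^*(y)|:e^*\ \text{extreme in}\ B_{Y^*}\}$, it suffices to bound $\|\widehat G^*(e^*)\|$ on extreme points $e^*$; by condition~(2), for $z^*\in A$ different from $y^*$ one gets $\|\widehat G^*(z^*)\|\le 1+\rho(z^*)(\tfrac{\varepsilon}{2}+\gamma)$, and a routine $w^*$-semicontinuity/approximation argument extends the bound $1+\rho_0(\tfrac{\varepsilon}{2}+\gamma)$ to the whole $w^*$-closure of $A_{e^*}$, which contains (a unimodular multiple of) $e^*$; choosing $\gamma$ small so that $1+\rho_0(\tfrac{\varepsilon}{2}+\gamma)<(1+\tfrac{\varepsilon}{2})(1-\gamma)$ gives the desired domination. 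The main obstacle, as in \cite{aap}, is precisely this last point: handling extreme points $e^*$ that are only $w^*$-limits of points of $A_{e^*}$ rather than members of $A$, and making sure the quantities $\|\widehat G^*(z^*)\|$ are $w^*$-lower-semicontinuous (or can be estimated by continuity of the relevant pieces) so that the uniform bound $\rho_0$ survives the limit; everything else is a verbatim transcription of the property-$\beta$ argument with the BPB step demoted to a density step.
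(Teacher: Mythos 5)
There is a genuine gap at the very first step: you never reduce to the case where $\widehat F^*$ attains its norm, and without that reduction your choice of parameters is circular. In your scheme you must first decide how close $\|\widehat F(x_0)\|$ is to $1$, then pick the extreme point $e^*$, then read off $\rho_0=\sup\{\rho(z^*)\colon z^*\in A_{e^*}\}<1$, and only then does the inequality $1+\rho_0(\frac{\varepsilon}{2}+\gamma)<(1+\frac{\varepsilon}{2})(1-\gamma)$ tell you how small $\gamma$ must be, i.e.\ how close to $1$ you need $\|\widehat F^*(y^*)\|\geq |y^*(\widehat F(x_0))|$ to be. But the $w^*$-approximation inside $A_{e^*}$ only yields $\|\widehat F^*(y^*)\|>\|\widehat F(x_0)\|-\delta$, and $\|\widehat F(x_0)\|<1$ was fixed \emph{before} $e^*$ and $\rho_0$ were known; if the only extreme points at which $\widehat F^*$ is nearly norm-one have $\rho_0$ tending to $1$, the quantifiers never close. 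This is precisely why the paper (following \cite{aap}) first invokes Zizler's theorem to replace $\widehat F$ by a nearby operator with $\widehat F^*\in\NA(Y^*,\F(M)^*)$, and then Johannesen's result \cite[Theorem 5.8]{lima} to obtain an extreme point $e^*$ with $\|\widehat F^*(e^*)\|=1$ \emph{exactly}; only after that are $r$ and $\gamma$ chosen, and a suitable $y_1^*\in A_{e^*}$ with $\|\widehat F^*(y_1^*)\|>1-\gamma$ is produced by $w^*$-lower semicontinuity of the norm along a net in $A_{e^*}$ converging to $te^*$. You must add this reduction; the remainder of your construction of $\widehat G$ then goes through as in the property-$\beta$ case.

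Two smaller points. You write $\|\widehat G^*(z^*)\|\le 1+\rho(z^*)(\frac{\varepsilon}{2}+\gamma)$, but condition (2) bounds $|z^*(\sigma(y^*))|$ by $\rho(y^*)$, the value of $\rho$ at the chosen $y^*$, not at $z^*$; with the correct index the bound $1+\rho(y^*)(\frac{\varepsilon}{2}+\gamma)\le 1+\rho_0(\frac{\varepsilon}{2}+\gamma)$ holds uniformly for \emph{all} $z^*\in A\setminus\{y^*\}$, so no lower-semicontinuity argument over $w^*$-closures of the various sets $A_{e^*}$ is required. All that is needed at that stage is that $A$ is norming for $Y$, i.e.\ $\|\widehat G\|=\sup\{\|\widehat G^*(z^*)\|\colon z^*\in A\}$, which follows from condition (3) together with the fact that $\sup\{|y^*(y)|\colon y^*\in B_{Y^*}\}$ is attained at an extreme point of $B_{Y^*}$.
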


	\begin{proof}
		First, we use a result of V. Zizler in \cite{zizler} which states that the set
		\[
	\{T\in \operatorname{L}(X,Y) \colon T^* \in \NA(Y^*,X^*)\}
	\]
	is dense in $\operatorname{L}(X,Y)$ for every Banach spaces $X$ and $Y$. Therefore, it will be enough to show that for every $\widehat{F}\in \operatorname{L}(\F(M),Y)$ with $\|F\|_L=1$ in this set and $\varepsilon>0$ there exist $\widehat{G} \in \operatorname{L}(\F(M),Y)$ and $u \in \Mol(M)$ such that
		\[
\|\widehat{G}(u)\|=\|G\|_L=1 \quad \mbox{ and } \quad \|F-G\|_L<\varepsilon.
\]
		By a result of T.~Johannesen (see \cite[Theorem 5.8]{lima}), we know that $\widehat{F}^*$ attains its norm at an extreme point $e^*$ of $B_{Y^*}$, and the definition of property quasi-$\beta$ gives us a set $A_{e^*}\subseteq A$ and a scalar $t$ with $|t|=1$ such that $te^*$ lies in the $w^*$-closure of $A_{e^*}$ and
		\[
r=\sup\{\rho(y^*)\colon y^* \in A_{e^*}\}<1.
\]
		Let us fix $0<\gamma<\frac{\varepsilon}{2}$ satisfying
		\[ 1+r\left ( \frac{\varepsilon}{2}+\gamma\right )<\left (1+\frac{\varepsilon}{2}\right )(1-\gamma)\]
		and take $y_1^* \in A_{e^*}$ such that $\|\widehat{F}^*y^*_1\|>1-\gamma$. By hypothesis, there exist $\widehat{g} \in \F(M)^*$ and $u \in \Mol(M)$ such that
		\[
\|\widehat{g}(u)\|=\|\widehat{g}\|=\|\widehat{F}^*(y^*_1)\|>1-\gamma \quad \mbox{ and } \quad \|\widehat{g} -\widehat{F}^*(y^*_1)\|<\gamma.
\]
		Define the operator $\widehat{G} \in \operatorname{L}(\mathcal{F}(M),Y)$ by
		\[ \widehat{G}(x)= \widehat{F}(x)+ \left [\left (1+\frac{\varepsilon}{2}\right )\widehat{g}(x)-\widehat{F}^*(y_1^*)(x)\right ]y_1\quad \forall \, x \in \mathcal{F}(M), \]
		where $y_1=\sigma(y_1^*)$. Then we have that
		\[ \|\widehat{G} -\widehat{F}\|\leq \frac{\varepsilon}{2}\|\widehat{g}\|+\|\widehat{g}-\widehat{F}^*(y_1^*)\|\leq \frac{\varepsilon}{2}+\gamma<\varepsilon.\]
		Therefore, it is enough to show that $\widehat{G}$ attains its norm at a molecule of $M$. Since for every $y^* \in Y^*$ one has
		\[ 	\widehat{G}^*(y^*)=\widehat{F}^*(y^*)+y^*(y_1)\left (\frac{\varepsilon}{2}\widehat{g}+\widehat{g}-\widehat{F}^*(y^*_1)\right ),\]
		given $y^* \in A\setminus\{y^*_1\}$, we have that
		\[ \|\widehat{G}^*y^*\|\leq 1+\rho(y^*_1)\left(\frac{\varepsilon}{2}+\gamma \right) \leq 1+r\left (\frac{\varepsilon}{2}+\gamma\right ).\]
		On the other hand, for $y^*=y^*_1$ we get that $\widehat{G}^*(y_1^*)=\left (1+\frac{\varepsilon}{2}\right )\widehat{g}$, so
		\[ \|\widehat{G}^*(y_1^*)\|=\left (1+\frac{\varepsilon}{2}\right )\|\widehat{g}\|>\left (1+\frac{\varepsilon}{2}\right )(1-\gamma)>1+r\left (\frac{\varepsilon}{2}+\gamma\right ).	\]
		Consequently, $\|\widehat{G}^*\|=\|\widehat{G}^*(y_1^*)\|$, but $\widehat{G}^*(y_1^*)$ is a multiple of $\widehat{g}$, so it attains its norm as a functional on $\mathcal{F}(M)$ at $u$, hence $\widehat{G}$ attains its norm at the molecule $u \in \Mol(M)$, as desired.
	\end{proof}

Let us show that the above result does not work for the Lip-BPB property. In order to do that, we need the following preliminary result, which is a Lipschitz version of \cite[Proposition 2.3]{acklm}. For the reader's convenience, we include a sketch of the proof based on the one of \cite[Theorem 2.1]{absolutesums} (which is actually stated in a more general form).

\begin{lem}\label{recdomain}
	Let $M$ be a pointed metric space and let $Y$ be a Banach space such that $Y=Y_1\oplus_\infty Z$ for suitable closed subspaces $Y_1$ and $Z$. If the pair $(M,Y)$ has the Lip-BPB property with a function $\eta(\varepsilon)$, then $(M,Y_1)$ also has the Lip-BPB property with the same function.
\end{lem}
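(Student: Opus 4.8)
The plan is to exploit the standard projection trick for $\ell_\infty$-sums. Let $P\colon Y\to Y_1$ denote the canonical norm-one projection associated with the decomposition $Y=Y_1\oplus_\infty Z$, and let $\iota\colon Y_1\to Y$ be the canonical inclusion $y_1\mapsto (y_1,0)$, which is an isometry. Given $\varepsilon>0$, I will take the same $\eta(\varepsilon)$ that witnesses the Lip-BPB property of $(M,Y)$. Now start with a norm-one $F\in\Lip(M,Y_1)$ and $p\neq q$ in $M$ with $\|F(p)-F(q)\|>(1-\eta(\varepsilon))d(p,q)$. Pushing forward through $\iota$, the map $\iota\circ F\in\Lip(M,Y)$ still has norm one (as $\iota$ is an isometry), and the same pair $(p,q)$ witnesses the $\eta(\varepsilon)$-almost-norm-attainment. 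Apply the Lip-BPB property of $(M,Y)$ to obtain $\widetilde G\in\Lip(M,Y)$ and $r\neq s$ with
\[
\frac{\|\widetilde G(r)-\widetilde G(s)\|}{d(r,s)}=\|\widetilde G\|_L=1,\qquad \|\widetilde G-\iota\circ F\|_L<\varepsilon,\qquad \frac{d(p,r)+d(q,s)}{d(p,q)}<\varepsilon.
\]

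The second step is to project back: set $G=P\circ\widetilde G\in\Lip(M,Y_1)$. Since $P$ has norm one, $\|G\|_L\leq 1$ and $\|G-F\|_L=\|P\circ\widetilde G-P\circ\iota\circ F\|_L\leq\|\widetilde G-\iota\circ F\|_L<\varepsilon$; the pair $(r,s)$ is unchanged, so the third inequality is automatic. It remains to check that $G$ has norm exactly one and strongly attains it at $(r,s)$. This is where the $\ell_\infty$-structure is essential: because $\widetilde G$ strongly attains its norm at $(r,s)$, the vector $\widetilde G(r)-\widetilde G(s)\in Y=Y_1\oplus_\infty Z$ has norm $d(r,s)$, and so its $Y_1$-coordinate $P\widetilde G(r)-P\widetilde G(s)$ or its $Z$-coordinate must have norm equal to $d(r,s)$ — one of the two maximands achieves the maximum. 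If the $Y_1$-coordinate already does, we are done with $G=P\circ\widetilde G$. If not, the $Z$-coordinate $Q\circ\widetilde G$ (where $Q$ is the projection onto $Z$) is the one attaining, and then I would instead modify $\widetilde G$ slightly first so as to force the $Y_1$-coordinate to carry the norm.

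The cleanest way to handle this, following \cite[Theorem 2.1]{absolutesums}, is to perturb before projecting: replace $\widetilde G$ by $\widehat G$ defined so that on the $Y_1$-component it agrees with $P\circ\widetilde G$ and on the $Z$-component it is multiplied by a factor slightly less than $1$, chosen small enough that the perturbation of the whole map is within $\varepsilon$ (using the $\ell_\infty$-norm, shrinking one coordinate changes the norm of a difference by at most the amount of shrinkage on that coordinate). After this adjustment, the maximum defining $\|\widehat G(r)-\widehat G(s)\|$ is attained on the $Y_1$-coordinate, $\|\widehat G\|_L$ is still $1$ (the $Y_1$-part was untouched and it already had a pair realizing norm one, namely $(r,s)$, after possibly re-scaling), and then $G=P\circ\widehat G$ does the job. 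One has to be mildly careful to do the rescaling in the right order so that all three estimates — the $\varepsilon$-closeness of $G$ to $F$, the norm-one normalization, and the strong attainment at $(r,s)$ — hold simultaneously; since the paper only asks for a sketch, I would refer the reader to \cite[Theorem 2.1]{absolutesums} for the bookkeeping.

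The main obstacle is precisely this last point: naively projecting a map that strongly attains its norm need not yield a map that strongly attains its norm (the norm could be carried entirely by the $Z$-coordinate), so the argument genuinely needs the preliminary $Z$-shrinking perturbation, and one must verify that this perturbation can be made arbitrarily small while keeping the normalization exact. Everything else — the isometric behaviour of $\iota$, the contractivity of $P$, and the invariance of the pair $(r,s)$ and hence of the quantity $\frac{d(p,r)+d(q,s)}{d(p,q)}$ — is routine.
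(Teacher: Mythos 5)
Your overall strategy --- embed $F$ into $Y$ through the isometry $\iota$, apply the Lip-BPB property of $(M,Y)$, and project back with the norm-one projection $P$ --- is exactly the one the paper uses (it simply transplants the proof of \cite[Theorem 2.1]{absolutesums}). The routine parts ($\|G\|_L\le 1$, $\|G-F\|_L<\varepsilon$, invariance of the pair $(r,s)$) are fine. The problem is your treatment of the one nontrivial point, namely why the $Y_1$-coordinate carries the norm at $(r,s)$. The scenario you worry about cannot occur, and the reason is a one-line observation you are missing: writing $Q$ for the complementary projection onto $Z$, one has $Q\circ\iota\circ F=0$, hence
\[
\|Q\circ\widetilde G\|_L=\|Q\circ(\widetilde G-\iota\circ F)\|_L\le\|\widetilde G-\iota\circ F\|_L<\varepsilon<1,
\]
so $\|Q\widetilde G(r)-Q\widetilde G(s)\|\le\varepsilon\, d(r,s)<d(r,s)=\|\widetilde G(r)-\widetilde G(s)\|$. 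By the definition of the $\ell_\infty$-norm the maximum must then be attained on the $Y_1$-coordinate, i.e.\ $\|P\widetilde G(r)-P\widetilde G(s)\|=d(r,s)$, and since $\|P\circ\widetilde G\|_L\le 1$ this gives $\|G\|_L=1$ together with strong attainment at $(r,s)$. (For $\varepsilon\ge 1$ one reduces to $\varepsilon<1$ in the standard way.) No preliminary perturbation of $\widetilde G$ is needed.

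By contrast, the patch you propose instead --- shrinking the $Z$-coordinate by a factor slightly less than $1$ before projecting --- does not work as described and is in fact circular: you justify that after the shrinking the $Y_1$-coordinate carries the norm by saying that the $Y_1$-part ``already had a pair realizing norm one, namely $(r,s)$'', but that is precisely the statement to be proved. Had it been possible that $\|P\widetilde G(r)-P\widetilde G(s)\|=\tfrac12 d(r,s)$ while the $Z$-coordinate realized $d(r,s)$, then multiplying the $Z$-coordinate by $1-t$ for small $t>0$ would leave the $Z$-coordinate dominant, destroy the normalization $\|\widehat G\|_L=1$, and no small perturbation of that form could transfer the attainment to $Y_1$. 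So the perturbation step should be deleted and replaced by the estimate above; with that substitution your argument coincides with the paper's.
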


\begin{proof}
Fix $\varepsilon >0$, let $\eta(\varepsilon)>0$ be the constant given by the Lip-BPB property of $(M,Y)$ and consider $\widehat{F}_1 \in \operatorname{L}(\mathcal{F}(M),Y_1)$ with $\|F_1\|_L=1$. In the proof of \cite[Theorem 2.1]{absolutesums}, it is shown that if $x_0\in S_{\F(M)}$ satisfies $\|\widehat{F}_1(x_0)\|>1-\eta(\varepsilon)$, then there exist $\widehat{G}_1 \in \operatorname{L}(\mathcal{F}(M),Y_1)$ and $x_1 \in S_{\F(M)}$ such that
	\[ \|\widehat{G}_1(x_1)\|=\|G_1\|_L=1,\quad \|F_1-G_1\|_L< \varepsilon, \quad \|x_0-x_1\|< \varepsilon.\]
	It is enough to take $m \in \Mol(M)$ as $x_0$ and use the Lip-BPB property of $(M,Y)$ instead of the BPB property of $(\F(M),Y)$, so we can ensure that the point $x_1$ is a molecule of $M$.
\end{proof}

The following example, based on \cite[Example 4.1]{acklm}, shows that Proposition \ref{quasi-beta} does not hold for the Lip-BPB property.

\begin{ejem}\label{quasibetano}
	For each $k\in \mathbb{N}$ with $k \geq 2$, consider $Y_k=\mathbb{R}^2$ endowed with the norm
	\[ \|(x,y)\|=\max \left\{|x|,|y|+\frac{1}{k}|x|\right \} \quad \forall \, (x,y) \in \mathbb{R}^2.\]
	Observe that $B_{Y_k}$ is the absolutely convex hull of the set $\{(0,1), (1,1-\frac{1}{k}),  (-1,1-\frac{1}{k})\}$, so each $Y_k$ is polyhedral. Consequently, $Y_k$ has property $\beta$ (see \cite{lindens}). Now, consider the metric space $M=\{0,1,2\}$ with the usual metric. By Corollary \ref{corfinite}, we know that the pair $(M,\mathbb{R})$ has the Lip-BPB property.  Besides, $Y= [\oplus_{k\in\mathbb{N}} Y_k ]_{c_0}$ has property quasi-$\beta$ by \cite[Proposition 4]{aap}, since it is a $c_0$-sum of Banach spaces having that property. However, the pair $(M,Y)$ fails the Lip-BPB property.
\end{ejem}

\begin{proof}
	Fix $0<\varepsilon<\frac{1}{2}$ and assume that there exists $\eta(\varepsilon)>0$ such that $(M,Y_k)$ has the Lip-BPB property with this function for that $\varepsilon$ for every $k \in \mathbb{N}$, $k\geq 2$. That is, for every $k\geq 2$, for every $\widehat{F}_k\in \operatorname{L}(\mathcal{F}(M),Y_k)$ with $\|\widehat{F}_k\|=1$, and every $m_k\in \Mol(M)$ such that $\|\widehat F_k(m_k)\|>1-\eta(\varepsilon)$, there exist $\widehat G_k \in \operatorname{L}(\mathcal{F}(M),Y_k)$ and $u_k \in \Mol(M)$ such that
	\[
	\|\widehat{G}_k(u_k)\|=\|G_k\|_L=1,  \quad \|\widehat F_k-\widehat G_k\| <\varepsilon, \quad \|m_k-u_k\|<\varepsilon. \]
	Recall that $\mathcal{F}(M)$ is two-dimensional and that $m_{0,2}=\frac12 m_{0,1} + \frac12 m_{1,2}$, so $B_{\mathcal{F}(M)}=\overline{\co}\{\pm m_{0,1},\pm m_{1,2}\}$ is a square.
	For every $k\geq 2$, define $\widehat{F}_k \colon \mathcal{F}(M) \longrightarrow Y_k$ by
	\[ \widehat{F}_k(m_{0,1})=\left (-1,1-\frac{1}{k}\right ) \quad \mbox{ and } \quad \widehat{F}_k(m_{1,2})=\left (1,1-\frac{1}{k}\right ) .\]
	Clearly $\|F_k\|_L=1$ and $\widehat{F}_k(m_{0,2})=\widehat{F}_k\left (\frac{1}{2}m_{0,1} + \frac{1}{2}m_{1,2}\right )=\left (0,1-\frac{1}{k}\right )$. Hence, $\|\widehat{F}_k(m_{0,2})\|=1-\frac{1}{k}$.  Then, for every $k \in \mathbb{N}$ such that $1-\frac{1}{k} > 1-\eta(\varepsilon)$, we may find $\widehat{G}_k \colon \mathcal{F}(M) \longrightarrow Y_k$ and $u_k \in \Mol(M)$ such that
	\[ \|\widehat{G}_k(u_k)\|=\|G_k\|_L=1 \quad \|F_k-G_k\|_L<\varepsilon \quad \|u_k-m_{0,2}\|<\varepsilon.  \]
	A straightforward application of Lemma 1.3 in \cite{articulo1} shows that
	\[ \|m_{0,2}-m_{0,1}\|, \|m_{0,2}-m_{1,2}\| \geq 1. \]
	Hence, $u_k=m_{0,2}$ for every $k \in \mathbb{N}$ such that $1-\frac{1}{k}>1-\eta(\varepsilon)$. As $u_k=m_{0,2}=\frac{1}{2}m_{0,1}+\frac{1}{2}m_{1,2}$ and $\|\widehat{G}_k(u_k)\|=1$, it follows that the whole interval $[\widehat{G}_k(m_{0,1}), \widehat{G}_k(m_{1,2})]$ lies on $S_{Y_k}$, so $\widehat{G}_k(m_{0,1})$ and $ \widehat{G}_k(m_{1,2})$ belong to the same face of $B_{Y_k}$. As a consequence, by the shape of $B_{Y_k}$, we obtain that $\|\widehat{G}_k(m_{0,1})-\widehat{G}_k(m_{1,2})\|\leq 1$. Furthermore, since $\|F_k-G_k\|_L<\varepsilon$, we have that
	\[ \|\widehat{F}_k(m_{0,1})-\widehat{G}_k(m_{1,2})\|\leq \|\widehat{F}_k (m_{0,1})-\widehat{G}_k(m_{0,1})\|+\|\widehat{G}_k(m_{0,1})-\widehat{G}_k(m_{1,2})\|<\varepsilon+1<\frac{3}{2}. \]
	On the other hand, since $\|\widehat{F}_k(m_{0,1})-\widehat{F}_k(m_{1,2})\|=2$,
	\[ \|\widehat{F}_k(m_{0,1})-\widehat{F}_k(m_{0,2})\|\geq \|\widehat{F}_k(m_{0,1})-\widehat{F}_k(m_{0,2})\|-\|\widehat{F}_k(m_{1,2})-\widehat{G}_k(m_{1,2})\|>2-\varepsilon>\frac{3}{2}, \]
	which is a contradiction. Note that  $Y=[\oplus_{k\in\mathbb{N}} Y_k]_{c_0}$, so Lemma \ref{recdomain} implies that $(M,Y)$ does not have the Lip-BPB property.
	\end{proof}
	
	From now on in this section, we will focus our attention on Lipschitz compact maps. Let $M$ be a pointed metric space, $Y$ be a Banach space, and $F \colon M \longrightarrow Y$ be a Lipschitz map. We say that $F$ is \textit{Lipschitz compact} when its Lipschitz image, that is, the set
	\[ \left \{\frac{F(p)-F(q)}{d(p,q)} \colon p,q \in M, \, p\neq q \right \}\subseteq Y,\]
	is relatively compact. We denote by $\Lipc(M,Y)$ the space of Lipschitz compact maps from $M$ to $Y$. Some results related to this notion appear in \cite{lipschitzcompact}. Let us make two comments. First, observe that if $Y$ is finite-dimensional, then all Lipschitz maps are Lipschitz compact. Second, it is immediate that a Lipschitz map $F\colon M \longrightarrow Y$ is Lipschitz compact if, and only if, its associated linear operator $\widehat{F}\colon \mathcal{F}(M) \longrightarrow Y$ is compact.
	
	Our aim is to study for which pointed metric spaces $M$ and Banach spaces $Y$ the pair $(M,Y)$ has the Lip-BPB property for Lipschitz compact maps. The first result we present is a slight modification of Theorem \ref{theorem-Gromovunifconcave}.
	
	\begin{prop}
		Let $M$ be a uniformly Gromov rotund pointed metric space. Then, $(M,Y)$ has the Lip-BPB property for Lipschitz compact maps for every Banach space $Y$.
	\end{prop}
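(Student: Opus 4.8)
The plan is to reproduce the proof of Theorem~\ref{theorem-Gromovunifconcave} essentially verbatim, while keeping track of the fact that every operator built along the way is compact. Recall that a Lipschitz map $F\colon M\longrightarrow Y$ is Lipschitz compact if and only if $\widehat F\colon \F(M)\longrightarrow Y$ is a compact operator. So, given $0<\varepsilon<1$, I would first fix $0<\delta<1$ with $\diam\bigl(S(B_{\F(M)},\widehat f_m,\delta)\bigr)<\varepsilon$ for every $m\in\Mol(M)$ (Remark~\ref{remark-Gromovuniformlyconcave}), choose $\eta>0$ with $\bigl(1+\frac{\varepsilon}{4}\bigr)(1-\eta)>1+\frac{\varepsilon(1-\delta)}{4}$, and start from a \emph{compact} $\widehat F\in\operatorname{L}(\F(M),Y)$ with $\|F\|_L=1$ together with $m\in\Mol(M)$ such that $\|\widehat F(m)\|>1-\eta$. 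Defining $\widehat G_0(x)=\widehat F(x)+\frac{\varepsilon}{4}\widehat f_m(x)\widehat F(m)$ as in that proof, the operator $\widehat G_0-\widehat F$ has rank one, so $\widehat G_0$ and hence $\widehat G=\widehat G_0/\|\widehat G_0\|$ are again compact; and exactly as before one gets $\|\widehat F-\widehat G\|\leq\frac{\varepsilon}{2}$ together with the implication that $\|x\|\leq 1$ and $\|\widehat G(x)\|\geq\|\widehat G(m)\|$ force $x\in\pm S(B_{\F(M)},\widehat f_m,\delta)$.

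The only step of the proof of Theorem~\ref{theorem-Gromovunifconcave} that leaves the compact setting is the use of the density of $\SA(M,Y)$ in $\Lip(M,Y)$ coming from \cite[Proposition~3.3]{articulo1}. I would replace it by the corresponding statement for compact maps: for a uniformly Gromov concave $M$, the set $\SA(M,Y)\cap\Lipc(M,Y)$ is norm dense in $\Lipc(M,Y)$. This should follow from the very argument of \cite[Proposition~3.3]{articulo1}: applied to the compact $\widehat G$ above, that argument produces $\widehat H$ by successive rank-one perturbations along the uniformly strongly exposing functionals $\widehat f_{m_n}$ attached to molecules $m_n$ which, by uniformity, form a Cauchy sequence; since each perturbation is finite-rank and the perturbations can be taken with summable norms, the process converges in operator norm, hence $\widehat H$ is compact (a uniform limit of compact operators), attains its norm at $u=\lim_n m_n\in\Mol(M)$ (using that $\Mol(M)$ is closed), and $\|\widehat G-\widehat H\|$ is as small as prescribed. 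With this available, one takes (if $\widehat G$ does not already attain its norm at $m$) a number $\varepsilon'$ with $0<\varepsilon'<\min\{\frac{\varepsilon}{2},\,\|\widehat G\|-\|\widehat G(m)\|\}$ and gets such an $\widehat H$ with $\|\widehat G-\widehat H\|<\varepsilon'$.

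From here the rest is word for word the proof of Theorem~\ref{theorem-Gromovunifconcave}: the chain $\|\widehat G(u)\|\geq\|\widehat H(u)\|-\varepsilon'\geq\|\widehat H\|-\bigl(\|\widehat G\|-\|\widehat G(m)\|\bigr)=\|\widehat G(m)\|$ yields $u\in\pm S(B_{\F(M)},\widehat f_m,\delta)$, hence $\|m-u\|<\varepsilon$ or $\|m+u\|<\varepsilon$ by the choice of $\delta$, while $\|\widehat F-\widehat H\|\leq\|\widehat F-\widehat G\|+\|\widehat G-\widehat H\|<\varepsilon$; and $\widehat H$ is compact, i.e.\ the witnessing Lipschitz map lies in $\Lipc(M,Y)$. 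I expect the only real obstacle to be the bookkeeping behind the compact version of \cite[Proposition~3.3]{articulo1}, namely verifying that its iterative perturbation scheme never leaves the ideal of compact (indeed finite-rank up to a norm limit) operators while still converging; this is routine once one observes that all perturbations are finite-rank and norm limits of compact operators are compact, but it is the point that should be spelled out carefully.
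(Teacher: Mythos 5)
Your proposal is correct and is essentially identical to the paper's own (much terser) proof: the authors likewise observe that $\widehat{G}_0-\widehat{F}$ is a rank-one perturbation and that $\widehat{H}$, produced by the Lindenstrauss-type iterative scheme behind \cite[Proposition~3.3]{articulo1}, is a norm limit of compact operators and hence compact. The compactness bookkeeping you flag as the ``only real obstacle'' is exactly the content of the paper's one-line justification, so nothing further is needed.
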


	\begin{proof}
		It is enough to note that if we take a Lipschitz compact map $F \in \Lipc(M,Y)$ with $\|F\|_L=1$, then the Lipschitz maps $G_0$ and $H$ which appear in the proof of Theorem \ref{theorem-Gromovunifconcave} will also be Lipschitz compact. This is because $\widehat{G_0}-\widehat{F}$ is a rank-one operator and $\widehat{H}$ is obtained, following the proof of \cite[Proposition 1]{lindens}, as the limit of a sequence of compact operators.
	\end{proof}

	As in Section \ref{sec-universal}, this proposition have two interesting corollaries.
	
	\begin{cor}
		Let $M$ be a H\"older metric space. Then, for every Banach space $Y$ the pair $(M,Y)$ has the Lip-BPB property for Lipschitz compact maps.
	\end{cor}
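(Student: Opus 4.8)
The plan is to read this off directly from the two results just established. First I would recall, as noted after Remark \ref{remark-Gromovuniformlyconcave}, that a pointed metric space is uniformly Gromov concave if and only if its set of molecules $\Mol(M)$ is uniformly Gromov rotund, so that the hypothesis of the proposition preceding this corollary is literally the statement ``$M$ is uniformly Gromov concave''. Then I would apply Proposition \ref{holder}, which asserts that every H\"older metric space $(N,d^\theta)$ with $0<\theta<1$ is uniformly Gromov concave; combining the two gives at once that $(M,Y)$ has the Lip-BPB property for Lipschitz compact maps for every Banach space $Y$. In short, this corollary stands to the preceding proposition exactly as the H\"older corollary of Section \ref{sec-universal} stands to Theorem \ref{theorem-Gromovunifconcave}, and the proof is the one-line deduction just described.

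There is no genuine obstacle to overcome here, since all the substance has been extracted beforehand. On one side, Proposition \ref{holder} carries the metric computation, estimating the Gromov product of three distinct points of $(N,d^\theta)$ via the auxiliary strictly decreasing functions $f_t(s)=(t^\theta-s^\theta)/(t-s)^\theta$ and splitting according to whether $d(x,y)$ exceeds $d(y,z)$ or not. On the other side, the preceding proposition records that the construction in the proof of Theorem \ref{theorem-Gromovunifconcave} preserves Lipschitz compactness: the perturbation $\widehat{G_0}-\widehat F$ is rank one, hence compact, and the strongly norm-attaining approximant $\widehat H$ is produced as a norm limit of compact operators following the proof of \cite[Proposition 1]{lindens}, so it too is compact. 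Thus the only point to check is that feeding a Lipschitz compact $F$ into that machinery keeps one inside $\Lipc(M,Y)$ at every stage, and this is precisely what has already been verified; I would simply cite it.
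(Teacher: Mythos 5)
Your proposal is correct and is exactly the paper's intended argument: the corollary is obtained by combining Proposition \ref{holder} (every H\"older metric space is uniformly Gromov concave) with the immediately preceding proposition on uniformly Gromov concave spaces and Lipschitz compact maps, whose hypothesis you rightly identify with uniform Gromov concavity despite the ``rotund'' wording. Nothing further is needed.
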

	
	\begin{cor}
		Let $M$ be a concave pointed metric space such that $\mathcal{F}(M)$ has property $\alpha$. Then, for every Banach space $Y$ the pair $(M,Y)$ has the Lip-BPB property for Lipschitz compact maps.
	\end{cor}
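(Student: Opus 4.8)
The plan is to deduce this corollary directly from the preceding proposition on the Lip-BPB property for Lipschitz compact maps, in exactly the same way that Corollary \ref{alpha} was deduced from Theorem \ref{theorem-Gromovunifconcave}. The only point that needs to be checked is that a concave pointed metric space $M$ for which $\mathcal{F}(M)$ has property $\alpha$ is uniformly Gromov concave --- equivalently, that $\Mol(M)$ is a set of uniformly strongly exposed points of $B_{\mathcal{F}(M)}$, which is precisely what ``uniformly Gromov rotund'' means in the statement of the previous proposition (cf.\ the discussion after Remark \ref{remark-Gromovuniformlyconcave} and \cite[Definition 3.5]{articulo1}).

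First I would invoke \cite[Theorem 3.16]{articulo1}: if $M$ is concave and $\mathcal{F}(M)$ has property $\alpha$, then $\Mol(M)$ is a set of uniformly strongly exposed points of $B_{\mathcal{F}(M)}$, which by Remark \ref{remark-Gromovuniformlyconcave} says that $M$ is uniformly Gromov concave. Having established this, I would simply apply the preceding proposition to the uniformly Gromov rotund space $M$ to conclude that $(M,Y)$ has the Lip-BPB property for Lipschitz compact maps for every Banach space $Y$.

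There is essentially no obstacle to overcome here: all of the analytic content has already been carried out in the proof of the preceding proposition, where the crucial observations were that the perturbation $\widehat{G_0}-\widehat{F}$ appearing in the proof of Theorem \ref{theorem-Gromovunifconcave} is a rank-one (hence compact) operator, and that the strongly norm-attaining approximant $\widehat{H}$ is produced as a norm-limit of compact operators following the argument in the proof of \cite[Proposition 1]{lindens}. Thus the write-up amounts to the two-line chain of implications above, with no further computation needed.
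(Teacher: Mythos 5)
Your proposal is correct and follows exactly the route the paper intends: Corollary of the compact-maps proposition via \cite[Theorem 3.16]{articulo1}, which shows that concavity together with property $\alpha$ of $\mathcal{F}(M)$ forces $\Mol(M)$ to be a set of uniformly strongly exposed points, i.e.\ $M$ is uniformly Gromov concave (what the proposition's statement calls ``uniformly Gromov rotund''). Nothing further is needed, and your remark resolving the slight terminology mismatch between the proposition's hypothesis and Remark \ref{remark-Gromovuniformlyconcave} is accurate.
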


	In this case, it does not make sense to give an analogous result of Corollary \ref{corfinite}, because if $M$ is a finite pointed metric space, then $\Lip(M,Y)=\Lipc(M,Y)$ and so the result is the same.

	In order to get more results, note that it is immediate from its proof that Proposition \ref{propbeta} also holds for Lipschitz compact maps. Consequently, we obtain the following result.
	
	\begin{prop}\label{propbetac}
		Let $M$ be a pointed metric space such that $(M,\mathbb{R})$ has the Lip-BPB property and let $Y$ be a Banach space satisfying property $\beta$. Then, $(M,Y)$ has the Lip-BPB property for Lipschitz compact maps.
	\end{prop}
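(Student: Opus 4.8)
The plan is simply to revisit the proof of Proposition~\ref{propbeta} and observe that it already delivers a Lipschitz compact map when it is fed one. So I would start with a Lipschitz compact map $F\in\Lipc(M,Y)$ with $\|F\|_L=1$ and a molecule $m\in\Mol(M)$ with $\|\widehat F(m)\|>1-\eta'(\gamma)$, where $\gamma>0$ and the modulus $\eta'$ of the scalar Lip-BPB property of $(M,\R)$ are chosen exactly as in the proof of Proposition~\ref{propbeta} (in terms of the constant $\rho$ coming from property~$\beta$, Definition~\ref{beta}). Running that argument word for word would produce an index $\lambda\in\Lambda$, a functional $\widehat g\in\F(M)^*$, a molecule $u\in\Mol(M)$, and the operator
\[
\widehat G(x)=\widehat F(x)+\left[\left(1+\tfrac{\varepsilon}{2}\right)\widehat g(x)-\widehat F^*(y_\lambda^*)(x)\right]y_\lambda\qquad(x\in\F(M)),
\]
satisfying $\|\widehat G-\widehat F\|<\varepsilon$, $\|m-u\|<\gamma<\varepsilon$, and attaining its norm at the molecule $u$.

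The only new observation needed is that $\widehat G-\widehat F$ is a rank-one operator, since its range is contained in $\operatorname{span}\{y_\lambda\}$. Hence $\widehat G=\widehat F+(\widehat G-\widehat F)$ is the sum of a compact operator (recall that $F$ being Lipschitz compact is equivalent to $\widehat F$ being compact) and a finite-rank operator, so $\widehat G$ is compact; that is, $G\in\Lipc(M,Y)$. This gives the Lip-BPB property for Lipschitz compact maps with the very same modulus $\eta$ furnished by Proposition~\ref{propbeta}.

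Two small points are worth recording. First, the scalar hypothesis on $(M,\R)$ is invoked only on the functional $\widehat F^*(y_\lambda^*)\in\F(M)^*$, which has one-dimensional range and so raises no compactness issue, meaning no extra assumption on $M$ is required. Second, by Remark~\ref{remark:betaonlyrho} the resulting modulus depends only on $M$ and on $\rho$. I do not expect any genuine obstacle here: the statement is a one-line refinement of Proposition~\ref{propbeta}, resting on the elementary facts that a bounded finite-rank perturbation of a compact operator is compact and that the correction term supplied by property~$\beta$ has rank one.
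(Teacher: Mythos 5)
Your proposal is correct and coincides with the paper's own argument: the paper likewise observes that Proposition~\ref{propbeta} applies verbatim because the correction term $\widehat G-\widehat F$ has range in $\operatorname{span}\{y_\lambda\}$, so it is a rank-one (hence compact) perturbation and $\widehat G$ inherits compactness from $\widehat F$. Nothing further is needed.
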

	
	Furthermore, the hypothesis that $Y$ satisfies property $\beta$ cannot be removed, as the following example shows. It is just a rewriting of Example \ref{finito no}.
	
	\begin{ejem}
		Consider the metric space $M=\{0,1,2\}$ with the usual metric and let $Y$ be a strictly convex Banach space which is not uniformly convex. Then, $(M,\mathbb{R})$ has the Lip-BPB property (for Lipschitz compact maps), but $(M,Y)$ fails the Lip-BPB property for Lipschitz compact maps.
	\end{ejem}
	
	Given a pointed metric space $M$ and a Banach space $Y$, we denote by $\SA_K(M,Y)$ the set of those Lipschitz compact maps from $M$ to $Y$ which strongly attain their norm, that is,
	\[ \SA_K(M,Y)=\SA(M,Y)\cap\Lipc(M,Y).\]

	As before, if we analyze the proof of Proposition \ref{quasi-beta} we can give an analogous result for the density of $\SA(M,Y)$ more general than Proposition \ref{propbetac}. In that proof, we see that $\widehat{G}-\widehat{F}$ is a rank-one operator, so $\widehat{G}$ will be compact if $\widehat{F}$ is. Consequently, we obtain the following result.
	
	\begin{prop}
		Let $M$ be a pointed metric space such that $\SA(M,\mathbb{R})$ is dense in $\Lip(M,\mathbb{R})$ and let $Y$ be a Banach space having property quasi-$\beta$. Then, we have that
		\[ \overline{\SA_K(M,Y)}=\Lipc(M,Y).\]
	\end{prop}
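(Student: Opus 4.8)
The plan is to transcribe the proof of Proposition~\ref{quasi-beta} almost word for word, now starting from a Lipschitz compact map, while keeping track of compactness at each step. That proof began with an appeal to Zizler's theorem in order to reduce to the case in which $\widehat{F}^*$ attains its norm; here that reduction costs nothing, because if $\widehat{F}$ is compact then $\widehat{F}^*\colon Y^*\longrightarrow\F(M)^*$ is compact and $w^*$-to-norm continuous on bounded sets, so $\widehat{F}^*(B_{Y^*})$ is norm-compact and hence $\widehat{F}^*\in\NA(Y^*,\F(M)^*)$ with no perturbation of $\widehat{F}$ required. Thus I would simply fix a compact $\widehat{F}\in\operatorname{L}(\F(M),Y)$ with $\|F\|_L=1$ together with $\varepsilon>0$, and look for a compact $\widehat{G}\in\operatorname{L}(\F(M),Y)$ and a molecule $u\in\Mol(M)$ with $\|\widehat{G}(u)\|=\|G\|_L=1$ and $\|F-G\|_L<\varepsilon$.

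From this point the argument is the one already carried out in Proposition~\ref{quasi-beta}: Johannesen's theorem (\cite[Theorem~5.8]{lima}) provides an extreme point $e^*$ of $B_{Y^*}$ at which $\widehat{F}^*$ attains its norm; property quasi-$\beta$ supplies a set $A_{e^*}\subseteq A$, a unimodular scalar $t$ with $te^*$ in the $w^*$-closure of $A_{e^*}$, and $r=\sup\{\rho(y^*)\colon y^*\in A_{e^*}\}<1$; one chooses $0<\gamma<\frac{\varepsilon}{2}$ with $1+r\bigl(\frac{\varepsilon}{2}+\gamma\bigr)<\bigl(1+\frac{\varepsilon}{2}\bigr)(1-\gamma)$ and, using $w^*$-lower semicontinuity of $y^*\mapsto\|\widehat{F}^*(y^*)\|$ at $te^*$ (note $\|\widehat{F}^*(te^*)\|=\|\widehat{F}^*\|=1$), picks $y_1^*\in A_{e^*}$ with $\|\widehat{F}^*(y_1^*)\|>1-\gamma$. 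Applying the density of $\SA(M,\R)$ to $y_1^*\circ F$ and rescaling gives $\widehat{g}\in\F(M)^*$ and $u\in\Mol(M)$ with $\|\widehat{g}(u)\|=\|\widehat{g}\|=\|\widehat{F}^*(y_1^*)\|$ and $\|\widehat{g}-\widehat{F}^*(y_1^*)\|<\gamma$. With $y_1=\sigma(y_1^*)$ one then sets
\[
\widehat{G}(x)=\widehat{F}(x)+\Bigl[\bigl(1+\tfrac{\varepsilon}{2}\bigr)\widehat{g}(x)-\widehat{F}^*(y_1^*)(x)\Bigr]y_1\qquad(x\in\F(M)),
\]
checks that $\|\widehat{G}-\widehat{F}\|<\varepsilon$, and verifies exactly as before that $\widehat{G}^*(y_1^*)=\bigl(1+\frac{\varepsilon}{2}\bigr)\widehat{g}$ strictly dominates $\|\widehat{G}^*(y^*)\|$ for every $y^*\in A\setminus\{y_1^*\}$; combined with condition~(3) of quasi-$\beta$ and $w^*$-lower semicontinuity of the adjoint norm (which give $\|\widehat{G}\|=\sup\{\|\widehat{G}^*(y^*)\|\colon y^*\in A\}$), this forces $\|\widehat{G}\|=\|\widehat{G}^*(y_1^*)\|$, and since $\widehat{g}$ attains its norm at $u$, so does $\widehat{G}$.

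The only genuinely new input — already flagged in the paragraph preceding the statement — is that $\widehat{G}-\widehat{F}$ is a rank-one operator (its range lies in $\R\,y_1$), hence $\widehat{G}$ is compact as soon as $\widehat{F}$ is; thus $G\in\Lipc(M,Y)$ and therefore $G\in\SA_K(M,Y)$. I do not anticipate a real obstacle: the proof is a line-by-line copy of that of Proposition~\ref{quasi-beta}, and the two stages at which compactness might a priori be destroyed — the initial Zizler-type reduction and the perturbation from $\widehat{F}$ to $\widehat{G}$ — are precisely the two points handled above. The only thing requiring care is the routine bookkeeping in the choice of $\gamma$ and in the rescaling of $\widehat{g}$, which is unchanged from the scalar-valued argument.
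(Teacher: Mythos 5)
Your proposal is correct and follows exactly the route the paper intends: it reruns the proof of Proposition~\ref{quasi-beta} verbatim and observes that $\widehat{G}-\widehat{F}$ has range in $\R\,y_1$, hence is rank-one, so compactness is preserved. Your additional observation that the Zizler reduction is unnecessary here --- because the adjoint of a compact operator is $w^*$-to-norm continuous on $B_{Y^*}$ and therefore automatically norm-attaining --- is a welcome clarification of a point the paper leaves implicit.
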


	Furthermore, recall that Example \ref{quasibetano} gave us a finite pointed metric space $M$ and a Banach space $Y$ having property quasi-$\beta$ such that $(M,Y)$ fails the Lip-BPB property. The Lip-BPB property and Lip-BPB property for Lipschitz compact maps are equivalent in this case, so we conclude that the previous result is not true for the Lip-BPB property for Lipschitz compact maps in general.

	The proposition below will be a useful tool in order to carry the Lip-BPB property for compact maps from some sequence spaces to function spaces.
	
	\begin{prop}\label{projections}
		Let $M$ be a pointed metric space and let $Y$ be a Banach space. Suppose that there exists a net of norm-one projections $\{Q_\lambda\}_{\lambda \in \Lambda} \subset \operatorname{L}(Y,Y)$ such that $\{Q_\lambda(y)\}\longrightarrow y$ in norm for every $y \in Y$. If there is a function $\eta\colon \mathbb{R}^+ \longrightarrow \mathbb{R}^+$ such that for every $\lambda \in \Lambda$, the pair $(M,Q_\lambda(Y))$ has the Lip-BPB property for Lipschitz compact maps witnessed by the function $\eta$ , then the pair $(M,Y)$ has the Lip-BPB property for Lipschitz compact maps.
	\end{prop}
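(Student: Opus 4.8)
The plan is to transfer a Lipschitz compact map $F$ on $M$ into one of the ranges $Q_\lambda(Y)$, apply the hypothesis there, and push the resulting data back into $Y$ along the isometric inclusion $Q_\lambda(Y)\hookrightarrow Y$. Concretely, given $\varepsilon>0$ I would set $\eta'(\varepsilon):=\tfrac12\eta(\varepsilon/2)$ and claim that this function witnesses the Lip-BPB property for Lipschitz compact maps of $(M,Y)$; for each individual $F$ one then chooses a suitable index $\lambda$.

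The key preliminary observation is that, for a fixed Lipschitz compact $F$, one has $\|Q_\lambda\widehat F-\widehat F\|\longrightarrow 0$ in the operator norm. Indeed, $\widehat F\colon \F(M)\longrightarrow Y$ is a compact operator, so $K:=\overline{\widehat F(B_{\F(M)})}$ is compact; since the net $\{Q_\lambda\}$ is bounded by $1$ and converges to the identity in the strong operator topology, it converges to the identity uniformly on $K$ (a standard compactness argument: cover $K$ by finitely many balls of radius $\delta$ about points $k_i$ and estimate $\|Q_\lambda k-k\|\le \|Q_\lambda\|\,\|k-k_i\|+\|Q_\lambda k_i-k_i\|+\|k_i-k\|$). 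Hence $\|Q_\lambda\widehat F-\widehat F\|=\sup_{x\in B_{\F(M)}}\|(Q_\lambda-I)\widehat F x\|\le \sup_{y\in K}\|(Q_\lambda-I)y\|\longrightarrow 0$.

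Now fix a norm-one $F\in\Lipc(M,Y)$ and $m\in\Mol(M)$ with $\|\widehat F(m)\|>1-\eta'(\varepsilon)$. Using the above, pick $\lambda$ with $\|Q_\lambda\widehat F-\widehat F\|<\min\{\tfrac12\eta(\varepsilon/2),\tfrac\varepsilon4\}$, and set $\widehat F_\lambda:=Q_\lambda\circ\widehat F\colon \F(M)\longrightarrow Q_\lambda(Y)$. This is a compact operator with $\|\widehat F_\lambda\|\le 1$ and $\bigl|1-\|\widehat F_\lambda\|\bigr|=\bigl|\,\|\widehat F\|-\|Q_\lambda\widehat F\|\,\bigr|\le \|Q_\lambda\widehat F-\widehat F\|$. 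Put $\widehat G_\lambda:=\widehat F_\lambda/\|\widehat F_\lambda\|$, a norm-one compact operator into $Q_\lambda(Y)$. Since $\|\widehat F_\lambda\|\le 1$, we get $\|\widehat G_\lambda(m)\|\ge\|\widehat F_\lambda(m)\|\ge \|\widehat F(m)\|-\|Q_\lambda\widehat F-\widehat F\|>1-\eta(\varepsilon/2)$. Applying the Lip-BPB property for Lipschitz compact maps of $(M,Q_\lambda(Y))$, witnessed by $\eta$, with parameter $\varepsilon/2$ to $\widehat G_\lambda$ and $m$, we obtain a compact operator $\widehat H\colon \F(M)\longrightarrow Q_\lambda(Y)$ and $u\in\Mol(M)$ with $\|\widehat H(u)\|=\|H\|_L=1$, $\|\widehat H-\widehat G_\lambda\|<\varepsilon/2$ and $\|m-u\|<\varepsilon/2$.

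Finally, I view $\widehat H$ as a map into $Y$ through the isometric inclusion $Q_\lambda(Y)\hookrightarrow Y$; it remains Lipschitz compact because $Q_\lambda(Y)$ is a closed subspace, it still has norm $1$, it still attains its norm at $u$, and $\|m-u\|<\varepsilon$. For the remaining estimate, $\|\widehat H-\widehat F\|\le \|\widehat H-\widehat G_\lambda\|+\|\widehat G_\lambda-\widehat F_\lambda\|+\|\widehat F_\lambda-\widehat F\|<\tfrac\varepsilon2+\bigl|1-\|\widehat F_\lambda\|\bigr|+\|Q_\lambda\widehat F-\widehat F\|\le \tfrac\varepsilon2+2\|Q_\lambda\widehat F-\widehat F\|<\varepsilon$, as desired. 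The only genuinely delicate point is the uniform convergence of $\{Q_\lambda\}$ to the identity on the relatively compact Lipschitz image of $F$; everything else is routine $\varepsilon$-bookkeeping, and it relies essentially on the hypothesis that the witnessing function $\eta$ is the same for every $\lambda$.
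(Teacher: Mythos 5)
Your argument is correct and is essentially the paper's proof: the paper simply defers to the proof of Proposition 2.5 of \cite{dgmm}, which proceeds exactly as you do --- use compactness of $\widehat F$ to upgrade the strong convergence of the bounded net $\{Q_\lambda\}$ to norm convergence $\|Q_\lambda\widehat F-\widehat F\|\to 0$, normalize $Q_\lambda\widehat F$, apply the hypothesis in $(M,Q_\lambda(Y))$ with the $\lambda$-independent function $\eta$, and transfer the result back through the isometric inclusion $Q_\lambda(Y)\hookrightarrow Y$. Your $\varepsilon$-bookkeeping is sound, so nothing further is needed.
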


	\begin{proof}
		It is enough to repeat the proof of Proposition 2.5 in \cite{dgmm}, taking the point $x_0$ as a molecule of $\mathcal{F}(M)$ and using the Lip-BPB property for Lipschitz compact maps instead of the BPBp for compact operators.
	\end{proof}

	The following result concerning preduals of $L_1$-spaces is based on \cite[Theorem 4.2]{abcck}.

	\begin{prop}\label{L1}
		Let $M$ be a pointed metric space such that $(M,\mathbb{R})$ has the Lip-BPB property. Let $Y$ be a Banach space such that $Y^*$ is isometrically isomorphic to an $L_1$-space. Then, $(M,Y)$ has the Lip-BPB property for Lipschitz compact maps.
	\end{prop}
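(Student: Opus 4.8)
The plan is to reduce the statement to the case of range spaces $\ell_\infty^n$, for which the conclusion is already available, exploiting that an isometric predual of an $L_1$-space has the metric approximation property and is, locally, almost isometric to the spaces $\ell_\infty^n$.

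First I would settle the model case. The space $\ell_\infty^n$ has property $\beta$ with constant $\rho=0$, witnessed by the pairs $\{(e_i^*,e_i)\colon i=1,\dots,n\}$: one has $\|e_i^*\|=\|e_i\|=e_i^*(e_i)=1$, $e_i^*(e_j)=0$ whenever $i\neq j$, and $\|y\|_\infty=\max_i|e_i^*(y)|$ for every $y$. Hence, by Proposition \ref{propbetac} (whose proof, as recorded in Remark \ref{remark:betaonlyrho}, yields a witnessing function depending only on $M$ and on the property-$\beta$ constant) together with the hypothesis that $(M,\mathbb{R})$ has the Lip-BPB property, for \emph{every} $n\in\mathbb{N}$ the pair $(M,\ell_\infty^n)$ has the Lip-BPB property for Lipschitz compact maps witnessed by one and the same function $\varepsilon\longmapsto\eta_0(\varepsilon)>0$.

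Now I would run a successive-approximation scheme based on two classical facts about the $L_1$-predual $Y$ (both contained, with references, in \cite{abcck}): $Y$ has the metric approximation property, and, by Lindenstrauss' local characterization of isometric $L_1$-preduals, for every finite-dimensional subspace $E\subseteq Y$ and every $\delta>0$ there exist a finite-dimensional $F$ with $E\subseteq F\subseteq Y$ and a surjective isomorphism $u\colon F\longrightarrow\ell_\infty^{\dim F}$ with $\|u\|=1$ and $\|u^{-1}\|\leq 1+\delta$. Given $\varepsilon>0$, put $\varepsilon_k=\varepsilon\, 2^{-k-3}$, $t_k=\tfrac12\eta_0(\varepsilon_k)$, and declare $\eta(\varepsilon):=t_0$. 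Set $\widehat G^{(0)}=\widehat F$ and $m_0=m$, so that $\widehat G^{(0)}\in\Lipc(M,Y)$ has norm one and $\|\widehat G^{(0)}(m_0)\|>1-t_0$. Inductively, given a Lipschitz compact $\widehat G^{(k)}$ with $\|G^{(k)}\|_L=1$ and $\|\widehat G^{(k)}(m_k)\|>1-t_k$, I would: use the metric approximation property and the compactness of $\widehat G^{(k)}$ to replace it, up to an error of our choice, by a finite-rank operator with range in some finite-dimensional $E_k\subseteq Y$; enlarge $E_k$ to $F_k$ carrying $u_k\colon F_k\to\ell_\infty^{n_k}$ as above with distortion $\delta_k$ of our choice; compose with $u_k$ and renormalize to obtain a norm-one map in $\Lipc(M,\ell_\infty^{n_k})$ (automatically Lipschitz compact, the range being finite-dimensional) which, since the two auxiliary errors are chosen small relative to $\eta_0(\varepsilon_k)-t_k=t_k$, still exceeds $1-\eta_0(\varepsilon_k)$ at $m_k$; apply the Lip-BPB property for Lipschitz compact maps of $(M,\ell_\infty^{n_k})$ with parameter $\varepsilon_k$ and function $\eta_0$ to get $\widehat H_k\in\Lipc(M,\ell_\infty^{n_k})$ and $m_{k+1}\in\Mol(M)$ with $\|\widehat H_k(m_{k+1})\|=\|H_k\|_L=1$, $\|m_{k+1}-m_k\|<\varepsilon_k$, and $\widehat H_k$ within $\varepsilon_k$ of the transported map; and finally set $\widehat G^{(k+1)}=u_k^{-1}\widehat H_k/\|u_k^{-1}\widehat H_k\|_L$. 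Then $\widehat G^{(k+1)}$ has range in $F_k$, so is finite-rank and Lipschitz compact, has norm one, and, collecting all the errors, satisfies $\|\widehat G^{(k+1)}-\widehat G^{(k)}\|<2\varepsilon_k$; moreover, since $u_k^{-1}$ distorts norms by at most $1+\delta_k$, the exact attainment of $\widehat H_k$ at $m_{k+1}$ degrades only to $\|\widehat G^{(k+1)}(m_{k+1})\|>1-t_{k+1}$, provided $\delta_k$ and the step-$k$ approximation error are also chosen small relative to $t_{k+1}$ and to $\varepsilon_k$. At each stage the finitely many smallness requirements imposed on $\delta_k$ and on that step-$k$ error are mutually compatible, each being of the form ``strictly less than a positive quantity built out of $\varepsilon_k$, $t_k$ and $t_{k+1}$''.

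Finally I would pass to the limit. The sequence $(\widehat G^{(k)})$ is Cauchy in $\operatorname{L}(\F(M),Y)$ and $(m_k)$ is Cauchy in $\F(M)$, so they converge to some $\widehat G$ and some $m_\infty$; $m_\infty\in\Mol(M)$ because $\Mol(M)$ is closed, and $\widehat G$ is Lipschitz compact as a norm-limit of finite-rank operators. Moreover $\|G\|_L=\lim_k\|G^{(k)}\|_L=1$ and $\|\widehat G(m_\infty)\|=\lim_k\|\widehat G^{(k)}(m_k)\|=1$ since $t_k\to 0$, while $\|\widehat G-\widehat F\|\leq\sum_k 2\varepsilon_k<\varepsilon$ and $\|m_\infty-m\|\leq\sum_k\varepsilon_k<\varepsilon$. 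Hence $\widehat G$ strongly attains its norm at $m_\infty$, and $\eta(\varepsilon)=t_0$ witnesses the Lip-BPB property for Lipschitz compact maps of $(M,Y)$. The genuinely delicate point is precisely this limiting argument: a single passage through an $\ell_\infty^n$-model cannot yield exact norm attainment in $Y$, because that model is only $(1+\delta)$-isometric, so the distortions have to be driven to $0$ along an iteration whose accumulated perturbation is kept below $\varepsilon$ — this is the mechanism of the proof of \cite[Theorem 4.2]{abcck}, and it is also why the statement is not a mere combination of Propositions \ref{propbetac} and \ref{projections}.
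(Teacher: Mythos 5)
Your argument is correct in outline, but it takes a genuinely different route from the paper's. The paper gives a \emph{single-pass} proof: since $\widehat F$ is compact, its image admits a finite $\delta$-net $\{y_1,\dots,y_n\}$, and the Lazar--Lindenstrauss theorem \cite[Theorem 3.1]{ll} supplies a subspace $E\subseteq Y$ that is \emph{exactly} isometric to some $\ell_\infty^m$ with $d(y_i,E)<\delta$ for all $i$; as $\ell_\infty^m$ is $1$-injective, there is a norm-one projection $P$ of $Y$ onto $E$, and one checks $\|P\widehat F-\widehat F\|<4\delta$. A single application of the Lip-BPB property of $(M,\ell_\infty^m)$ (with the dimension-independent function coming from property $\beta$ with $\rho=0$, exactly as you set it up via Proposition \ref{propbetac} and Remark \ref{remark:betaonlyrho}) to $P\widehat F/\|P\widehat F\|$ then already produces a map with range in $E\subseteq Y$ attaining its norm exactly at a molecule --- no iteration and no limit are needed, precisely because the local model is isometric rather than $(1+\delta)$-isometric and is the range of a norm-one projection. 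Your scheme instead invokes the weaker local characterization (finite-dimensional subspaces sit inside $(1+\delta)$-isomorphs of $\ell_\infty^{n}$) together with the metric approximation property, and compensates with the successive-approximation mechanism of \cite{abcck}; the bookkeeping ($t_k=\tfrac12\eta_0(\varepsilon_k)$, the auxiliary errors chosen small against $t_k$, $t_{k+1}$ and $\varepsilon_k$, and the limit $\widehat G=\lim_k\widehat G^{(k)}$ attaining its norm at $m_\infty=\lim_k m_k\in\Mol(M)$, which is legitimate since $\Mol(M)$ is norm-closed) is sound, and the facts you quote about $L_1$-preduals are standard. What your version buys is independence from the exact-isometry form of the Lazar--Lindenstrauss theorem, so it would survive in settings where only almost-isometric $\ell_\infty^n$-models are available; what the paper's version buys is brevity and the complete avoidance of the limiting argument, so your closing remark that a single passage through an $\ell_\infty^n$-model cannot yield exact attainment is true only for the $(1+\delta)$-isomorphic models you chose, not for the isometric ones the paper uses.
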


	\begin{proof}
		Fix $\varepsilon>0$. Consider $\eta(\varepsilon)$ the function given by  the Lip-BPB property of $(M,\mathbb{R})$. Since $\ell_\infty^n$ has property $\beta$ for every $n \in \mathbb{N}$ with constant $\rho=0$, we may apply Proposition \ref{propbeta} to obtain that all the pairs $(M,\ell_\infty^n)$ has the Lip-BPB property (for Lipschitz compact maps) witnessed by the same function $\varepsilon \longmapsto \eta(\varepsilon)$ (see Remark \ref{remark:betaonlyrho}). We take \[\eta'=\min\left \{\frac{\varepsilon}{4}, \eta\left (\frac{\varepsilon}{2}\right )\right \}>0. \]
		Now, consider $F \in \Lipc(M,Y)$ with $\|F\|_L=1$ and $m \in \Mol(M)$ such that $\|\widehat{F}(m)\|>1-\eta'$. Let us take $0<\delta<\frac{1}{4}\min\left \{\frac{\varepsilon}{4}, \|\widehat{F}(m)\|-1+\eta\left( \frac{\varepsilon}{2}\right )\right \}$ and let $\{y_1,\ldots, y_n\}$ be a $\delta$-net of $\widehat{F}(B_{\mathcal{F}(M)})$. In view of \cite[Theorem 3.1]{ll}, we can find a subspace $E \subset Y$ isometric to $\ell_\infty^m$ for some natural $m \in \mathbb{N}$ and such that $d(y_i,E)<\delta$ for every $i\in \{1,\ldots, n\}$. Let $P\colon Y \longrightarrow Y$ be a norm-one projection onto $E$. We will check that $\|P\widehat{F}-\widehat{F}\|<4\delta$. In order to show that, fix $x\in B_{\mathcal{F}(M)}$. Then, there exists $i \in \{1,\ldots,n\}$ such that $\|\widehat{F}(x)-y_1\|<\delta$. Let $e \in E$ be such that $\|e-y_i\|<\delta$. Then, we have that
		\begin{align*}
		\|\widehat{F}(x)-P\widehat{F}(x)\|&\leq \|\widehat{F}(x)-y_i\|+\|y_i-e\|+\|e-P\widehat{F}(x)\| \leq 2\delta + \|P(e)-P\widehat{F}(x)\|\\
		&\leq 2\delta+\|e-\widehat{F}(x)\|\leq 2\delta+ \|e-y_i\|+\|y_i-\widehat{F}(x)\|<4\delta.
		\end{align*}
		So $\|P\widehat{F}\|>\|\widehat{F}\|-4\delta=1-4\delta>0$, which implies that
		\[ \|P\widehat{F}(m)\|>\|\widehat{F}(m)\|-4\delta>1-\eta\left (\frac{\varepsilon}{2}\right ).\]
		Hence, the operator $\widehat{R}=\frac{P\widehat{F}}{\|P\widehat{F}\|}$ verifies that $\|\widehat{R}(m)\|>1-\eta\left (\frac{\varepsilon}{2}\right )$. Since the pair $(M,\ell^m_\infty)$ has the Lip-BPB property for Lipschitz compact maps witnessed by the function $\eta(\varepsilon)$ and $E\subset Y$ is isometrically isomorphic to $\ell_\infty^m$, we can find a Lipschitz compact map $G \in \Lipc(M,E) \subseteq \Lipc(M,Y)$ and $u \in \Mol(M)$ such that
		\[ \|\widehat{G}(u)\|=\|G\|_L=1, \quad \|\widehat{G}-\widehat{R}\|<\frac{\varepsilon}{2}, \quad \|m-u\|<\frac{\varepsilon}{2}.\]
		Finally, we have that
		\[ \|\widehat{G}-\widehat{F}\|\leq \|\widehat{G}-\widehat{R}\|+\|\widehat{R}-P\widehat{F}\|+\|P\widehat{F}-\widehat{F}\|<\frac{\varepsilon}{2}+1-\|P\widehat{F}\|+4\delta<\frac{\varepsilon}{2}+8\delta<\varepsilon.\qedhere\]
	\end{proof}

		We can give a result analogous to Proposition \ref{projections} for the density of $\SA(M,Y)$. The proof of the following result is a slight modification of the proof of that proposition.
	
	\begin{prop}\label{prop_Qlambda_density}
		Let $M$ be a pointed metric space and $Y$ be a Banach space. Suppose that there exists a net of norm-one projections $\{Q_\lambda\}_{\lambda \in \Lambda} \subset \operatorname{L}(Y,Y)$ such that $\{Q_\lambda(y)\}\longrightarrow y$ in norm for every $y \in Y$. If $\SA_K(M,Q_\lambda(Y))$ is dense in $\Lipc(M,Q_\lambda(Y))$ for every $\lambda\in \Lambda$, then
		\[\overline{\SA_K(M,Y)}=\Lipc(M,Y).\]
	\end{prop}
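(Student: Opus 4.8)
The plan is to follow the proof of Proposition \ref{projections} (equivalently, of Proposition 2.5 in \cite{dgmm}): given a Lipschitz compact map into $Y$, approximate it by one taking values in some $Q_{\lambda_0}(Y)$, apply the density hypothesis there, and transfer the conclusion back to $Y$. So I would fix $F\in\Lipc(M,Y)$, which we may assume to satisfy $\|F\|_L=1$, together with $0<\varepsilon<1$; the goal is to produce $G\in\SA_K(M,Y)$ with $\|F-G\|_L<\varepsilon$. Since $\widehat F$ is a compact operator, $K:=\overline{\widehat F(B_{\F(M)})}$ is a compact subset of $Y$. The key observation is that the uniformly bounded net $\{Q_\lambda\}$ converges to the identity uniformly on $K$: given $\varepsilon'>0$ one covers $K$ by finitely many balls of radius $\varepsilon'/3$ centred at $y_1,\dots,y_n$, uses the directedness of $\Lambda$ to choose $\lambda_0$ with $\|Q_{\lambda_0}(y_i)-y_i\|<\varepsilon'/3$ for all $i$, and then, since $\|Q_{\lambda_0}\|=1$, the triangle inequality yields $\|Q_{\lambda_0}(y)-y\|<\varepsilon'$ for every $y\in K$. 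Applying this with $\varepsilon'=\varepsilon/4$ furnishes an index $\lambda_0$ with $\|Q_{\lambda_0}\circ F-F\|_L<\varepsilon/4$.

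Next I would note that $Q_{\lambda_0}\circ F$ is a Lipschitz compact map with values in the closed subspace $Q_{\lambda_0}(Y)$ (its Lipschitz image is the image under $Q_{\lambda_0}$ of a relatively compact set), and that $1-\varepsilon/4<\|Q_{\lambda_0}\circ F\|_L\leq 1$, so in particular $\|Q_{\lambda_0}\circ F\|_L>0$. Normalising, put $R:=\bigl(Q_{\lambda_0}\circ F\bigr)\big/\|Q_{\lambda_0}\circ F\|_L\in\Lipc(M,Q_{\lambda_0}(Y))$; then $\|R\|_L=1$ and $\|R-Q_{\lambda_0}\circ F\|_L=\bigl|\,1-\|Q_{\lambda_0}\circ F\|_L\,\bigr|<\varepsilon/4$, whence $\|R-F\|_L<\varepsilon/2$. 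By the hypothesis that $\SA_K(M,Q_{\lambda_0}(Y))$ is dense in $\Lipc(M,Q_{\lambda_0}(Y))$, there is $G\in\SA_K(M,Q_{\lambda_0}(Y))$ with $\|R-G\|_L<\varepsilon/2$, and hence $\|F-G\|_L<\varepsilon$.

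Finally, regarding $G$ as a map into $Y$: since $Q_{\lambda_0}(Y)$ is a closed subspace of $Y$ carrying the ambient norm, the Lipschitz norm of $G$ is unchanged and $G$ still strongly attains it at the same pair of points, so $\Lipc(M,Q_{\lambda_0}(Y))\subseteq\Lipc(M,Y)$ and $\SA_K(M,Q_{\lambda_0}(Y))\subseteq\SA_K(M,Y)$; thus $G\in\SA_K(M,Y)$ with $\|F-G\|_L<\varepsilon$, as required. The only step that is not pure bookkeeping with the triangle inequality is the uniform convergence $Q_\lambda\to\mathrm{Id}$ on the compact set $K$ used in the first paragraph; I should also remark that, unlike in Proposition \ref{projections}, one need not monitor compactness of $G$ separately here, as it comes for free from the density hypothesis inside $\Lipc(M,Q_{\lambda_0}(Y))$.
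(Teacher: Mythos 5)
Your argument is correct and is exactly the route the paper intends: it refers the reader to the proof of Proposition~2.5 of \cite{dgmm} (via Proposition~\ref{projections}), whose core is precisely the uniform convergence of the norm-one net $\{Q_\lambda\}$ to the identity on the compact set $\overline{\widehat F(B_{\F(M)})}$, followed by an application of the hypothesis in $Q_{\lambda_0}(Y)$ and the observation that Lipschitz norm, strong attainment and compactness are unaffected by viewing a map into the closed subspace $Q_{\lambda_0}(Y)$ as a map into $Y$. You have simply written out in full what the paper leaves as a ``slight modification'', correctly noting that the uniformity of $\eta$ required in the Lip-BPB version is not needed for mere density.
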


	A first consequence of the above proposition is that for a Lindenstrauss space $Y$ (i.e.\ $Y$ is an isometric predual of an $L_1(\mu)$ space), the density of strongly norm attaining Lipschitz functionals passes to the density of strongly norm attaining compact Lipschitz maps.

	\begin{cor}
	Let $M$ be a pointed metric space such that 	$\SA(M,\mathbb{R})$ is dense in $\Lip(M,\R)$ and let $Y$ be a Banach space such that $Y^*$ is isometrically isomorphic to an $L_1(\mu)$-space. Then, $\SA_K(M,Y)$ is dense in $\Lipc(M,Y)$.
	\end{cor}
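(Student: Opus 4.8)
The statement is the density counterpart of Proposition~\ref{L1}, and the plan is to reuse that proof almost verbatim, replacing the Lip-BPB property of $(M,\mathbb{R})$ by the density of $\SA(M,\mathbb{R})$ and the use of Proposition~\ref{propbeta} by Proposition~\ref{quasi-beta}. One may alternatively phrase the argument through Proposition~\ref{prop_Qlambda_density} when $Y$ is separable: by the Lazar--Lindenstrauss description, $Y$ is the closure of an increasing sequence of subspaces $E_n$, each isometric to some $\ell_\infty^{k_n}$; since $\ell_\infty^{k_n}$ is $1$-injective, each $E_n$ is the range of a norm-one projection $Q_n$ of $Y$, and $Q_n\to\mathrm{Id}$ pointwise because $\bigcup_n E_n$ is dense and $\|Q_n\|=1$. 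As each $E_n$ is finite-dimensional, $\SA_K(M,E_n)=\SA(M,E_n)$ is dense in $\Lipc(M,E_n)=\Lip(M,E_n)$ by Proposition~\ref{quasi-beta}, so Proposition~\ref{prop_Qlambda_density} settles the separable case. For non-separable $Y$ (e.g.\ $Y=\ell_\infty$) there need not be such a net of projections, so I would instead argue locally, mimicking Proposition~\ref{L1}.

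Concretely, fix $\varepsilon>0$ and $F\in\Lipc(M,Y)$, which we may take of norm one. Since $\widehat{F}$ is compact, $\widehat{F}(B_{\mathcal{F}(M)})$ is relatively compact, so pick $0<\delta<\frac{\varepsilon}{16}$ and a finite $\delta$-net $\{y_1,\dots,y_n\}$ of it. Because $Y^*$ is an $L_1$-space, \cite[Theorem 3.1]{ll} yields a subspace $E\subseteq Y$ isometric to $\ell_\infty^m$ for some $m\in\mathbb{N}$ with $d(y_i,E)<\delta$ for all $i$, together with a norm-one projection $P\colon Y\to E$; the computation in the proof of Proposition~\ref{L1} gives $\|P\widehat{F}-\widehat{F}\|<4\delta$, hence $\|P\widehat{F}\|>1-4\delta>0$. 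Setting $\widehat{R}=P\widehat{F}/\|P\widehat{F}\|$ one has $\|R\|_L=1$, $\widehat{R}(\mathcal{F}(M))\subseteq E$, and $\|\widehat{R}-\widehat{F}\|\leq\bigl|\,1-\|P\widehat{F}\|\,\bigr|+\|P\widehat{F}-\widehat{F}\|<8\delta$. Now $E\cong\ell_\infty^m$ has property $\beta$ (with constant $0$), hence property quasi-$\beta$, and being finite-dimensional it satisfies $\Lip(M,E)=\Lipc(M,E)$ and $\SA(M,E)=\SA_K(M,E)$; applying Proposition~\ref{quasi-beta} to $E$, and using that $\SA(M,\mathbb{R})$ is dense in $\Lip(M,\mathbb{R})$, we obtain $G\in\SA(M,E)$ with $\|\widehat{G}-\widehat{R}\|<\frac{\varepsilon}{2}$. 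Since the isometric inclusion $E\hookrightarrow Y$ preserves both Lipschitz compactness and strong norm attainment, $G\in\SA_K(M,Y)$, and $\|\widehat{G}-\widehat{F}\|\leq\|\widehat{G}-\widehat{R}\|+\|\widehat{R}-\widehat{F}\|<\frac{\varepsilon}{2}+8\delta<\varepsilon$, so $\overline{\SA_K(M,Y)}=\Lipc(M,Y)$.

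I expect no genuine obstacle here. The only point needing attention is the bookkeeping of the constants, namely that $\delta<\frac{\varepsilon}{16}$ forces the three error terms ($\|P\widehat{F}-\widehat{F}\|$, the renormalisation error $\bigl|\,1-\|P\widehat{F}\|\,\bigr|$, and the $\frac{\varepsilon}{2}$ coming from density in $(M,E)$) to sum to less than $\varepsilon$; this is carried out verbatim in the proof of Proposition~\ref{L1}, and in fact the present situation is strictly easier since there is no almost-norm-attaining molecule to keep track of. All the substance sits in \cite[Theorem 3.1]{ll} (the local $\ell_\infty^m$-structure of $L_1$-preduals) and in Proposition~\ref{quasi-beta}.
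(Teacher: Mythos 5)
Your argument is correct and rests on exactly the same two ingredients as the paper's: the Lazar--Lindenstrauss theorem \cite[Theorem 3.1]{ll} producing $1$-complemented subspaces isometric to $\ell_\infty^m$ that almost contain any prescribed finite set, and Proposition \ref{quasi-beta} applied to those subspaces (which have property $\beta$). The only difference is presentational: the paper passes to the limit via Proposition \ref{prop_Qlambda_density}, using the net of norm-one projections indexed by pairs (finite subset, $\delta$) --- which, contrary to your concern, requires no separability assumption --- whereas you unfold the same approximation by hand for a single compact map, exactly as in the proof of Proposition \ref{L1}.
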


	Indeed, it is enough to take into account the already used classical result by Lazar and Lindenstrauss \cite[Theorem 3.1]{ll} that every finite subset of a Lindenstrauss space is ``almost'' contained in a subspace of it which is isometrically isomorphic to an $\ell_\infty^n$ space. Now, all these subspaces are one-complemented and have property $\beta$, so we are in the hypothesis of Proposition \ref{prop_Qlambda_density} by Proposition \ref{quasi-beta}.
		
	Finally, let us present a result concerning the following well known property. A Banach space $X$ is said to have the (Grothendieck) \textit{approximation property} if for every compact set $K$ and every $\varepsilon>0$, there is a finite-rank operator $R \in \operatorname{L}(X,X)$ such that $\|x-R(x)\|<\varepsilon$ for every $x \in X$.
	The following preliminary result, based in \cite[Proposition 4.4]{martin}, is completely elemental.
	
	\begin{prop}\label{approximation}
		Let $M$ be a pointed metric space and let $Y$ be a Banach space with the approximation property. Suppose that for every finite-dimensional subspace $W$ of $Y$, there exists a closed subspace $Z$ such that $W\leq Z\leq Y$ and satisfying that $\overline{\SA_K(M,Z)}=\Lipc(M,Z)$. Then, $\overline{\SA_K(M,Y)}=\Lipc(M,Y)$
	\end{prop}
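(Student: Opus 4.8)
The plan is to run the standard finite-rank reduction argument (as in the proof of \cite[Proposition 4.4]{martin} on which the statement is modelled), using that a Lipschitz compact map from $M$ to $Y$ corresponds exactly to a compact operator $\widehat{F}\colon \F(M)\longrightarrow Y$. The approximation property of $Y$ will let us replace $\widehat F$ by an operator with finite-dimensional range, the hypothesis will let us fit that range into one of the ``good'' subspaces $Z$, and then the density of $\SA_K(M,Z)$ will finish the job.

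Concretely, I would fix $F\in\Lipc(M,Y)$ and $\varepsilon>0$, and consider $K:=\overline{\widehat F(B_{\F(M)})}$, which is a compact subset of $Y$ precisely because $\widehat F$ is a compact operator. Applying the approximation property of $Y$ to this compact set, I obtain a finite-rank operator $R\in\operatorname{L}(Y,Y)$ with $\|y-R(y)\|<\varepsilon$ for every $y\in K$; since $\widehat F(x)\in K$ for every $x\in B_{\F(M)}$, this gives $\|R\widehat F-\widehat F\|\leq\varepsilon$. Now $R\widehat F$ is a finite-rank (hence compact) operator whose range is contained in the finite-dimensional subspace $W:=R(Y)$.

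By hypothesis there is a closed subspace $Z$ with $W\leq Z\leq Y$ and $\overline{\SA_K(M,Z)}=\Lipc(M,Z)$. Regarding $R\widehat F$ as an element of $\Lipc(M,Z)$ (its range lies in $W\subseteq Z$, and the operator norm is unchanged because the inclusion $Z\hookrightarrow Y$ is isometric), the density hypothesis yields $\widehat G\in\SA_K(M,Z)$ with $\|\widehat G-R\widehat F\|<\varepsilon$. Viewing $\widehat G$ again as an operator into $Y$, it remains compact, and it still strongly attains its norm at the same molecule and with the same value, so $G\in\SA_K(M,Y)$; moreover $\|G-F\|_L=\|\widehat G-\widehat F\|\leq\|\widehat G-R\widehat F\|+\|R\widehat F-\widehat F\|<2\varepsilon$. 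As $\varepsilon>0$ was arbitrary, $\overline{\SA_K(M,Y)}=\Lipc(M,Y)$.

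The argument is essentially routine, as the statement advertises; the only points needing a little care are that a Lipschitz compact map corresponds to a \emph{compact} operator on $\F(M)$ (so that $K$ is genuinely compact and the approximation property applies), that both the operator norm and the property of strongly attaining the norm at a molecule are unaffected by the isometric passage between the codomains $Z$ and $Y$, and that one applies the approximation property to the set $\overline{\widehat F(B_{\F(M)})}$ rather than merely to the Lipschitz image $\{(F(p)-F(q))/d(p,q)\colon p\neq q\}$ of $F$.
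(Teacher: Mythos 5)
Your argument is correct and is exactly the finite-rank reduction the paper has in mind: it declares the result ``completely elemental'' and omits the proof, pointing to \cite[Proposition 4.4]{martin}, whose argument you reproduce faithfully (apply the approximation property to the compact set $\overline{\widehat F(B_{\F(M)})}$, absorb the finite-dimensional range $R(Y)$ into a good subspace $Z$, approximate there by a strongly norm-attaining compact map, and note that both the Lipschitz norm and strong norm-attainment survive the isometric inclusion $Z\hookrightarrow Y$). The cautionary remarks at the end are apt and nothing is missing.
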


	If $Y$ is a polyhedral Banach space (i.e.\ for every finite-dimensional subspace its unit ball is the convex hull of finitely many points), then every finite-fimensional subspace of $Y$ has property $\beta$, so Proposition \ref{approximation} and Proposition \ref{quasi-beta} give us the following result.
	
	\begin{cor}
		Let $M$ be a pointed metric space and let $Y$ be a polyhedral Banach space with the approximation property. If $\SA(M,\mathbb{R})$ is dense in $\Lip(M,\mathbb{R})$,  then
		\[\overline{\SA_K(M,Y)}=\Lipc(M,Y).\]
	\end{cor}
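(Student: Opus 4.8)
The plan is to deduce the result from Proposition \ref{approximation}, using for each finite-dimensional subspace the subspace itself as the ``$Z$'' appearing in that proposition's hypothesis. Thus the whole task reduces to checking that every finite-dimensional subspace $W$ of $Y$ satisfies $\overline{\SA_K(M,W)}=\Lipc(M,W)$.

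First I would recall that, by the very notion of polyhedrality used here, the unit ball of each finite-dimensional subspace $W\leq Y$ is the convex hull of finitely many points; in particular $B_W$ is a polytope. As noted right after Definition \ref{beta}, a finite-dimensional Banach space whose unit ball is a polyhedron has property $\beta$: one takes the finitely many vertices of $B_W$ as the vectors $y_\lambda$ together with suitable supporting functionals of the facets as the $y^*_\lambda$, and the constant $\rho<1$ because there are only finitely many such pairs and no vertex is $\pm$ another. Hence $W$ has property $\beta$ and therefore also property quasi-$\beta$.

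Next, since $\SA(M,\mathbb{R})$ is norm-dense in $\Lip(M,\mathbb{R})$ by hypothesis and $W$ has property quasi-$\beta$, the compact-maps analogue of Proposition \ref{quasi-beta} yields $\overline{\SA_K(M,W)}=\Lipc(M,W)$. Taking $Z=W$, the hypothesis of Proposition \ref{approximation} is then verified for every finite-dimensional subspace $W$ of $Y$; as $Y$ is assumed to have the approximation property, that proposition gives $\overline{\SA_K(M,Y)}=\Lipc(M,Y)$, which is exactly the claim.

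I do not expect any genuine obstacle: the argument is a straightforward assembly of Proposition \ref{approximation}, the quasi-$\beta$ density result for compact Lipschitz maps, and the classical fact that a finite-dimensional polyhedral space has property $\beta$. The only point deserving a line of care is confirming that the vertices and facet-functionals of $B_W$ really provide a valid property-$\beta$ family with $\rho<1$, but this is routine and is, in fact, already part of the list of standard examples of property $\beta$ recorded earlier in the paper.
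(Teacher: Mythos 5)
Your argument is correct and coincides with the paper's own proof: the paper likewise observes that every finite-dimensional subspace of a polyhedral space has property $\beta$ (hence quasi-$\beta$), applies the compact-maps version of Proposition \ref{quasi-beta} to each such subspace, and concludes via Proposition \ref{approximation} using the approximation property of $Y$. No gaps.
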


Let us comment that Example \ref{quasibetano} shows that the above corollary does not hold for the Lip-BPB property. Indeed, a family $\{Y_k\colon k\geq 2\}$ of two-dimensional polyhedral Banach spaces is constructed there such that considering the metric space $M=\{0,1,2\}$ with the usual metric and writing $Y= [\oplus_{k\in\mathbb{N}} Y_k ]_{c_0}$, then the pair $(M,Y)$ fails the Lip-BPB property. But $Y$ is polyhedral as it is a $c_0$-sum of finite-dimensional polyhedral spaces, and $(M,\R)$ has the Lip-BPBp by Corollary \ref{corfinite}.

\vspace*{0.5cm}

\noindent \textbf{Acknowledgment:\ } The authors are grateful to Luis Carlos Garc\'{\i}a-Lirola and Abraham Rueda Zoca for many comments on the contents of this paper which have been very useful to us. They also thank the anonymous referees for their valuable suggestions, which have helped to improve the exposition of this paper.

\end{document}